\newcommand{\RR}{\mathbb{R}}
\newcommand{\ZZ}{\mathbb{Z}}
\newcommand{\calR}{\mathcal{R}}
\newcommand{\calJ}{\mathcal{J}}
\newcommand{\calP}{\mathcal{P}}
\newcommand{\calQ}{\mathcal{Q}}
\newcommand{\wDelta}{\hat{\Delta}}
\newcommand{\wtau}{\hat{\tau}}
\newcommand{\SMr}{C^{r}(\Delta)}
\newcommand{\reg}{\mathrm{reg}~}
\newcommand{\Initial}{\mathrm{In}}
\newcommand{\coker}{\mathrm{coker}~}
\newcommand{\img}{\mathrm{im}~}
\newcommand{\initdeg}{\mathrm{initdeg}}
\newcommand{\X}{\mathcal{X}} %points in P^1
\newcommand{\ba}{\mathbf{a}} %sequence of multiplicities
\theoremstyle{plain}
\newtheorem{Theorem}{Theorem}[section]
\newtheorem{Lemma}[Theorem]{Lemma}
\newtheorem{Proposition}[Theorem]{Proposition}
\newtheorem{Corollary}[Theorem]{Corollary}
\theoremstyle{definition}
\newtheorem{Definition}{Definition}[section]
\newtheorem{Example}{Example}[section]
\newtheorem{Problem}{Problem}
\newtheorem{KeyProblem}{Key Problem}
\newtheorem{Assumptions}{Assumptions}[section]
\theoremstyle{remark}
\newtheorem{Remark}{Remark}
\title{Planar splines on a triangulation with a single totally interior edge}
\author{Michael Dipasquale}
\address{DiPasquale: Department of Mathematics and Statistics\\ University of South Alabama\\ Mobile, AL\\ USA}
\email{mdipasquale@southalabama.edu}
\author{Beihui Yuan}
\address{Yuan: Department of Mathematics\\ Swansea university\\ Swansea SA1 8EN\\ UK}
\email{beihui.yuan@swansea.ac.uk}
\thanks{\textbf{Keywords}: Dimension of multivariate spline spaces, Gr\"obner bases, linear programming, lattice point enumeration}
\thanks{\textbf{2020 Mathematics Subject Classification}: Primary: 41A15, 13P25; Secondary: 13P10, 11P21, 52C05.}
\thanks{DiPasquale partially supported by NSF grant DMS-2201084}
\thanks{Yuan supported by EPSRC project EP/V012835/1}
\begin{document}
\begin{abstract}
We derive an explicit formula, valid for all integers $r,d\ge 0$, for the dimension of the vector space $C^r_d(\Delta)$ of piecewise polynomial functions continuously differentiable to order $r$ and whose constituents have degree at most $d$, where $\Delta$ is a planar triangulation that has a single totally interior edge.  This extends previous results of Toh\v{a}neanu, Min\'{a}\v{c}, and Sorokina.  Our result is a natural successor of Schumaker's 1979 dimension formula for splines on a planar vertex star.  Indeed, there has not been a dimension formula in this level of generality (valid for all integers $d,r\ge 0$ and any vertex coordinates) since Schumaker's result.  We derive our results using commutative algebra.
\end{abstract}
\maketitle

\section{Introduction}
Suppose $\Delta$ is a planar triangulation.  Given integers $r,d\ge 0$, we denote by $C^r_d(\Delta)$ the vector space of piecewise polynomial functions on $\Delta$ which are continuously differentiable of order $r$.  A fundamental problem in numerical analysis and computer-aided geometric design is to determine the dimension of (and a basis for) $C^r_d(\Delta)$~\cite{LaiSchumaker07}.  Even the dimension of the space $C^r_d(\Delta)$ turns out to be quite difficult.  It was known to Strang (see~\cite{billera1988homology}) that $\dim C^r_d(\Delta)$ depends on \textit{local geometry} (the number of slopes that meet at each interior vertex).  This dependence is standard by now, and is a fundamental part of a well-known lower bound for $\dim C^r_d(\Delta)$ derived by Schumaker~\cite{schumaker1979dimension}.  Hong proves in~\cite{Hong-1991} that Schumaker's lower bound coincides with $\dim C^r_d(\Delta)$ for $d\ge 3r+2$.  Under a mild genericity condition, Alfeld and Schumaker show that $\dim C^r_d(\Delta)$ also coincides with Schumaker's lower bound for $d=3r+1$~\cite{alfeld1990dimension}.

The `$2r+1$' conjecture of Schenck, appearing in his doctoral thesis~\cite{Schenck-1997-thesis} (see also~\cite{schenck2002cohomology,schenck2016algebraic}), is that $\dim C^r_d(\Delta)$ coincides with Schumaker's lower bound for $d\ge 2r+1$.  When $r\ge 2$, this has recently been disproved by the second author together with Schenck and Stillman~\cite{Yuan-Stillman-2019,Schenck-Stillman-Yuan-2020}.  If $r=1$, Schenck's conjecture reduces to a formula for the dimension of $C^1_3(\Delta)$ that has been conjectured since at least 1991 by Alfeld and Manni~\cite{schenck2016algebraic,Alfeld-2016-survey}.  This remains an open problem.

For $r+1\le d<3r+1$, there are relatively few general statements known about $\dim C^r_d(\Delta)$.  In this range it is possible that $\dim C^r_d(\Delta)$ depends upon \textit{global geometry} of $\Delta$ -- illustrated in the \textit{Morgan-Scott split}~\cite{Alfeld-2016-survey}.  The issue of geometric dependence can be sidestepped by assuming that $\Delta$ is suitably \textit{generic}.  %Even under this assumption, there are few formulas known for the generic dimension.  
If $r=1$, Billera~\cite{billera1988homology} shows that the generic dimension of $\dim C^1_d(\Delta)$ coincides with Schumaker's lower bound for all $d\ge 2$, proving a conjecture of Strang~\cite{Strang73}.  
%Billera's argument depends upon a result of Whiteley~\cite{Whiteley-1991-Matrix}.  
Whiteley computes certain generic dimension formulas for $r>1$ in~\cite{Whiteley-1991-Combinatorics}, but generic dimension formulas for $\dim C^r_d(\Delta)$ for all $d\ge r+1$ and $r\ge 2$ are (as of yet) out of reach.  Given the difficulty of computing the dimension of $C^r_d(\Delta)$ for $d<3r+1$, it is natural and useful to have a complete characterization of $\dim C^r_d(\Delta)$ for interesting examples, which is the motivation for our work.

In this paper we derive in Theorem~\ref{thm:ExplicitFormula} an explicit formula for $\dim C^r_d(\Delta)$ for all $r,d\ge 0$ whenever $\Delta$ is a triangulation that has a single totally interior edge -- that is, an edge connecting two interior vertices.  See Figure~\ref{fig:s3t4} for such a triangulation.  A more intuitive version of our main result, described in terms of lattice points in a certain polytope, appears in Theorem~\ref{prop:nontrivialcases}.  Our formula applies to any choice of vertex coordinates for $\Delta$, and only depends on the number of distinct slopes of edges meeting at each interior vertex.  This is the first non-trivial dimension formula for planar splines that applies in this level of generality (all $r,d\ge 0$ and any choice of vertex coordinates) since Schumaker computed the formula for splines on a planar vertex star in 1979~\cite{schumaker1979dimension}.

Our work directly extends results in previous papers of Toh\v{a}neanu, Min\'{a}\v{c}, and Sorokina~\cite{tohaneanu2005smooth,Minac-Tohaneanu-2013,sorokina2018} which study the dimension of splines on a particular triangulation with a single totally interior edge.  As a consequence of our work, we see that Schenck's `$2r+1$' conjecture is satisfied for triangulations with a single totally interior edge for all $r\ge 0$ (Corollary~\ref{cor:2r+1}).  Moreover, it is clear from our result that the dimension of splines on a triangulation with a single totally interior edge only depends on local geometry and not global geometry.  Thus the dependence of $\dim C^r_d(\Delta)$ upon global geometry indicated by the Morgan-Scott split does not manifest unless there is more than one totally interior edge.

% If $\Delta$ is a triangulation with a single totally interior edge, then $\Delta$ is a so-called \textit{generalized quasi-cross-cut} partition.  The dimension of the spline space on such partitions is studied in~\cite{Manni-1992} and~\cite{Shi-Wang-Yin-1998}, although no complete formula is given.  %Our bounds in Theorem~\ref{thm:LBregularity} offer a slight improvement on those of~\cite{Manni-1992,Shi-Wang-Yin-1998} in the case of a single totally interior edge.  
% We discuss the connections to~\cite{Manni-1992,Shi-Wang-Yin-1998} in Section~\ref{sec:conclusions}, Remark~\ref{rem:generalizedqccp}.

We briefly outline the paper.  In Section~\ref{sec:background} we recall background on splines and dimension formulas from previous papers.  Section~\ref{sec:init} is a largely technical section in which we prove a few results in commutative algebra, possibly of independent interest, for use in future sections.  We then prove the first formulation of our main result -- Theorem~\ref{prop:nontrivialcases} -- in Section~\ref{setion:section_1tot}, stated in terms of lattice points.  In Section~\ref{sec:comparison} we characterize in what degrees the spline space does not change upon removal of the totally interior edge (this is related to the phenomenon of supersmoothness explored in~\cite{sorokina2018}).  We give the fully explicit dimension formula in Theorem~\ref{thm:ExplicitFormula} of Section~\ref{sec:explicitformula} and illustrate the result with several examples.  We conclude with additional remarks and open problems in Section~\ref{sec:conclusions}.

\section{Splines on planar triangulations}\label{sec:background}

We call a domain $\Omega\subset\RR^2$ \textit{polygonal} if it consists of a simple closed polygon and its interior.  The simple closed polygon is the boundary of $\Omega$, which we denote by $\partial\Omega$.  Throughout this paper we assume $\Delta$ is a triangulation of a polygonal domain; we denote the domain which $\Delta$ triangulates by $|\Delta|$.  For our purposes, a triangulation consists of a collection of triangles in which each pair $\sigma,\sigma'$ of triangles satisfies $\sigma\cap\sigma'=\emptyset$ or $\sigma\cap\sigma'$ is either an edge or vertex of both $\sigma$ and $\sigma'$.  Specifically, we do not allow so-called `hanging vertices' which occur in the interior of an edge of a triangle.

We write $\Delta_0$ for the set of vertices of $\Delta$, $\Delta_1$ for the set of edges of $\Delta$, and $\Delta_2$ for the set of triangles of $\Delta$.  An \textit{interior edge} of $\Delta$ is an edge that is a common edge of two triangles of $\Delta$.  A \textit{boundary edge} of $\Delta$ is an edge that is only contained in a single triangle of $\Delta$.  An \textit{interior vertex} of $\Delta$ is a vertex that is not contained in any boundary edges.  Put $\Delta^\circ_0$ and $\Delta^\circ_1$ for the set of interior vertices and interior edges of $\Delta$, respectively.  A \textit{totally interior edge} of $\Delta$ is an edge that connects two interior vertices of $\Delta$.

Let $r\geq0$ be an integer. We define $C^{r}(\Delta)$ to be the set of $C^{r}$-differentiable piecewise polynomial functions on $\Delta$. These functions are called \emph{splines}. More explicitly:

\begin{Definition}
$\SMr$ is the set of functions $F:\Delta\rightarrow \RR$ such that:
\begin{enumerate}
\item For all facets $\sigma\in\Delta$, $F_\sigma:=F|_{\sigma}$ is a polynomial in $\RR[x,y]$.
\item $F$ is differentiable of order $r$.
\end{enumerate}
\end{Definition}
\noindent For each integer $d\geq 0$, we define
\begin{align*}
C^{r}_{d}(\Delta):=\{F\in C^{r}(\Delta):\deg(F_\sigma)\leq d, \text{ for all }\sigma\in\Delta_2\}.
\end{align*}
\noindent For an edge $\tau\in\Delta_1$, we write $\tilde{\ell}_\tau$ for a choice of affine linear form that vanishes on the affine span of $\tau$.

% A function $F:|\Delta|\to\RR$ is a \textit{piecewise polynomial} on $\Delta$ if $F_\sigma:=F|_\sigma\in\RR[x,y]$ for every $\sigma\in\Delta_2$.  We write $C^r(\Omega)$ for the collection of all piecewise polynomial functions which are continuously differentiable of order $r$ and $C^r(\Delta)$ for the collection of all piecewise polynomial functions which are continously differentiable of order $r$.  That is,
% \[
% C^r(\Delta)=\{F\in C^r(\Omega)\mid F|_{\sigma}\in\RR[x,y] \mbox{ for all }\sigma\in\Delta_2\}.
% \]
% Furthermore, we put
% \[
% C^r_d(\Delta):=\{F\in C^r(\Delta)\mid F_\sigma\in\RR[x,y]_{\le d}\},
% \]
% where $\RR[x,y]_{\le d}$ is the vector space of all polynomials of degree at most $d$.

\begin{Proposition}[Algebraic spline criterion]~\cite[Corollary~1.3]{billera1991dimension}\label{prop:algebraiccriterion}
Suppose $\Delta$ is a triangulation of a polygonal domain and $F:|\Delta|\to\RR$ is a piecewise polynomial function.  Then $F\in C^r(\Delta)$ if and only if
\[
\tilde{\ell}_\tau^{r+1}\mid F_{\sigma_1}-F_{\sigma_2}
\]
for every pair $\sigma_1,\sigma_2\in\Delta_2$ so that $\sigma_1\cap\sigma_2=\tau\in\Delta_1$.
\end{Proposition}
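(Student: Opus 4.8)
The plan is to reduce the global $C^r$ condition to a collection of local conditions, one across each interior edge, and then to translate each local condition into the stated divisibility statement by a direct Taylor-expansion computation. Since $F$ restricts to a polynomial on each closed triangle, it is automatically $C^\infty$ on the interior of every facet $\sigma\in\Delta_2$, so the only points at which $C^r$-differentiability can fail lie on $\Delta_1$, and I would handle these in two stages.

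\emph{First}, for a point $p$ in the relative interior of an interior edge $\tau=\sigma_1\cap\sigma_2$, a neighborhood of $p$ in $|\Delta|$ meets only $\sigma_1$ and $\sigma_2$, so $F$ is $C^r$ near $p$ exactly when the two polynomial pieces agree to order $r$ along $\tau$. After an affine change of coordinates I may assume $\tilde{\ell}_\tau=y$, so that $\tau$ lies on the line $\{y=0\}$. Writing the difference $g:=F_{\sigma_1}-F_{\sigma_2}=\sum_{j\ge 0}g_j(x)\,y^j$ with $g_j\in\RR[x]$, the standard criterion for $C^r$-gluing across a hyperplane (a consequence of Taylor's theorem) says that $F$ is $C^r$ across $\{y=0\}$ if and only if every partial derivative $\partial_x^k\partial_y^j g$ with $k+j\le r$ vanishes on $\{y=0\}$. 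Because $\partial_x^k\partial_y^j g|_{y=0}=j!\,g_j^{(k)}(x)$, this is equivalent to $g_0=g_1=\cdots=g_r\equiv 0$, which is in turn equivalent to $y^{r+1}\mid g$, i.e. $\tilde{\ell}_\tau^{r+1}\mid F_{\sigma_1}-F_{\sigma_2}$. This computation is the algebraic heart of the statement and supplies both implications simultaneously along edge interiors.

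\emph{Second}, I must upgrade $C^r$-ness along all edge interiors to $C^r$-ness at the vertices, which is where the only real subtlety lies. The strategy is to show that each partial derivative $\partial^\alpha F$ with $|\alpha|\le r$ is continuous on all of $|\Delta|$. Each such $\partial^\alpha F$ is again piecewise polynomial and restricts to a polynomial—hence a function continuous up to the closed triangle—on each $\overline{\sigma}$. The edge analysis of the previous step shows $\partial^\alpha F$ is continuous across the interior of every edge, and by continuity of polynomials up to the boundary this matching persists at the endpoints of each edge. Fixing a vertex $v$, the triangles containing $v$ form a fan whose consecutive members share an edge through $v$; traversing this fan and using the edgewise agreement at $v$ forces all the polynomial pieces to take the same value for $\partial^\alpha F$ at $v$. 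Hence $\partial^\alpha F$ is single-valued and continuous at $v$ as well, for every $|\alpha|\le r$, so $F\in C^r(\Delta)$.

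The single-edge computation is routine once coordinates are chosen; the main obstacle is the vertex step, and specifically the need to know that the link of each vertex is connected so that edgewise agreement propagates around $v$. For an interior vertex this link is a cycle and for a boundary vertex a path, so connectivity holds in either case, and the argument goes through. Assembling the two stages yields the equivalence: $F\in C^r(\Delta)$ if and only if $\tilde{\ell}_\tau^{r+1}\mid F_{\sigma_1}-F_{\sigma_2}$ for every interior edge $\tau=\sigma_1\cap\sigma_2$.
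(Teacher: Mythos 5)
The first thing to note is that the paper itself offers no proof of this proposition: it is quoted directly from Billera and Rose \cite[Corollary~1.3]{billera1991dimension}. So your argument can only be measured against the standard proof in the literature, which does proceed along your two-stage lines (a Taylor computation across each interior edge, then propagation to the vertices). Your first stage is complete and correct: the identity $\partial_x^k\partial_y^j g|_{y=0}=j!\,g_j^{(k)}(x)$, combined with the fact that a polynomial in $x$ vanishing on a segment vanishes identically, gives both implications along edge interiors, and the gluing direction can even be made fully elementary by writing $F=F_{\sigma_1}+G$ where $G$ vanishes on one side and equals $-\tilde{\ell}_\tau^{r+1}h$ on the other.

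There is, however, a genuine (though fixable) gap in your second stage. What you actually establish is that the \emph{piecewise-defined} candidates $\partial^\alpha F$, $|\alpha|\le r$, are single-valued and continuous on $|\Delta|$ --- the link-connectivity argument at each vertex is fine for that. But this is not yet the assertion that $F$ is $r$-times differentiable at a vertex $v$: continuity of a piecewise-assembled candidate function does not formally imply that it is the derivative of $F$ there, and your final sentence ``so $F\in C^r(\Delta)$'' silently conflates the two. To close the gap, argue as follows: since all polynomial pieces meeting at $v$ have equal gradients at $v$, the one-sided derivatives of $F$ at $v$ along any coordinate line agree, so the first partials of $F$ exist at $v$; they exist in a whole neighborhood of $v$ (by your edge analysis) and are continuous at $v$, so the classical criterion gives that $F$ is differentiable at $v$ with $\nabla F$ equal to the continuous piecewise gradient. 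Now induct on $r$, applying the same reasoning to the components of $\nabla F$, whose piecewise partials up to order $r-1$ still match across all interfaces. With that lemma inserted, your proof is correct and is essentially the argument behind the cited result.
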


The space $C^r_d(\Delta)$ is a finite dimensional $\RR$-vector space. One of the key problems in spline theory is 
\begin{KeyProblem}\label{key_problem_dim}
Determine $\dim C^{r}_{d}(\Delta)$ for all $(\Delta,r,d)$.
\end{KeyProblem}

%Throughout this section, we assume the genus of $\Delta$ is $0$. For simplicity, we also assume $\Delta$ has the property that the boundary map $\partial_{1}$ in the relative simplicial complex $C_{\bullet}(\Delta;\partial\Delta)$ is indecomposable.
% \begin{Definition}[Totally interior edges]
% Assume $\tau$ is an edge in $\Delta$. We say $\tau$ is \emph{totally interior}, if neither of the two vertices of $\tau$ is on the boundary of $\Delta$.
% \end{Definition}

To study this problem, we use a standard coning construction due to Billera and Rose~\cite{billera1991dimension}.  Namely, for any set $U\subset\RR^2$, define $\hat{U}\subset \RR^3$ by $\hat{U}:=\{(sa,sb,s): (a,b)\in U \mbox{ and } 0\le s\le 1\}$.  Define $\wDelta$ to be the tetrahedral complex whose tetrahedra are $\{\widehat{\sigma}:\sigma\in\Delta_2\}$.  All above definitions for triangulations in $\RR^2$ carry over in the expected way to tetrahedral complexes in $\RR^3$.  For a two-dimensional face $\wtau\in\Delta_2$, we write $\ell_\tau$ for the linear form defining the linear span of $\wtau$ (this linear form is the homogenization of the affine linear form $\tilde{\ell}_\tau$).  We put 
\[
[C^r(\wDelta)]_d:=\{F\in C^r(\wDelta): F\in \RR[x,y,z]_d\},
\]
where $\RR[x,y,z]_d$ is the vector space of \textit{homogeneous} polynomials of degree $d$.  We relate this space to $C^r_d(\Delta)$ using:
\begin{Proposition}\cite[Theorem~2.6]{billera1991dimension}
The real vector spaces $[C^r(\wDelta)]_d$ and $C^r_d(\Delta)$ are isomorphic.
\end{Proposition}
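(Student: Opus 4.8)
The plan is to construct explicit mutually inverse linear maps between $C^r_d(\Delta)$ and $[C^r(\wDelta)]_d$ by homogenization and dehomogenization, using the algebraic criterion of Proposition~\ref{prop:algebraiccriterion} to translate the smoothness conditions on each side into polynomial divisibility. Throughout I use the homogeneous analogue of Proposition~\ref{prop:algebraiccriterion}, which holds for the tetrahedral complex $\wDelta$ since all the relevant definitions carry over from $\RR^2$, and which characterizes $C^r(\wDelta)$ by the conditions $\ell_\tau^{r+1}\mid G_{\hat\sigma_1}-G_{\hat\sigma_2}$ across interior faces $\hat\tau=\hat\sigma_1\cap\hat\sigma_2$.

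First I would define the homogenization map. Given $F\in C^r_d(\Delta)$ with constituents $F_\sigma\in\RR[x,y]_{\le d}$, set $\hat F_{\hat\sigma}(x,y,z):=z^d F_\sigma(x/z,y/z)$, the homogenization of $F_\sigma$ to the \emph{fixed} degree $d$. The crucial observation is that, because the target degree $d$ is the same for every facet, the operation $g\mapsto z^d g(x/z,y/z)$ is $\RR$-linear on polynomials of degree at most $d$, so it commutes with differences. Hence for an interior edge $\tau=\sigma_1\cap\sigma_2$ we have $\hat F_{\hat\sigma_1}-\hat F_{\hat\sigma_2}=z^d(F_{\sigma_1}-F_{\sigma_2})(x/z,y/z)$. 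Writing $F_{\sigma_1}-F_{\sigma_2}=\tilde{\ell}_\tau^{r+1}h$ (possible by Proposition~\ref{prop:algebraiccriterion}) and recalling that $\ell_\tau$ is the degree-one homogenization of $\tilde{\ell}_\tau$, homogenizing the right-hand side to degree $d$ distributes as $\ell_\tau^{r+1}$ times the homogenization of $h$ to degree $d-(r+1)$. Thus $\ell_\tau^{r+1}\mid\hat F_{\hat\sigma_1}-\hat F_{\hat\sigma_2}$ for every interior edge, so $\hat F\in C^r(\wDelta)$; it is homogeneous of degree $d$ by construction, whence $\hat F\in[C^r(\wDelta)]_d$.

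Next I would define the dehomogenization map by restriction to the plane $z=1$: for $G\in[C^r(\wDelta)]_d$ set $\check G_\sigma(x,y):=G_{\hat\sigma}(x,y,1)$. Each $\check G_\sigma$ has degree at most $d$, and substituting $z=1$ into $\ell_\tau^{r+1}\mid G_{\hat\sigma_1}-G_{\hat\sigma_2}$ yields $\tilde{\ell}_\tau^{r+1}\mid\check G_{\sigma_1}-\check G_{\sigma_2}$, so $\check G\in C^r_d(\Delta)$ by Proposition~\ref{prop:algebraiccriterion}. Both maps are manifestly $\RR$-linear. To see they are mutually inverse I would compute the two composites directly: dehomogenizing then rehomogenizing gives $\check{\hat F}_\sigma=z^d F_\sigma(x/z,y/z)\big|_{z=1}=F_\sigma$, while homogenizing the restriction of a homogeneous polynomial recovers it, since $z^d G_{\hat\sigma}(x/z,y/z,1)=z^d\cdot z^{-d}G_{\hat\sigma}(x,y,z)=G_{\hat\sigma}(x,y,z)$ because $G_{\hat\sigma}$ is homogeneous of degree $d$. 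Hence the maps are inverse isomorphisms of $\RR$-vector spaces.

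The step requiring the most care is the well-definedness of the homogenization map, namely verifying that homogenizing each constituent to the common degree $d$ produces a genuine $C^r$ spline rather than merely an unrelated collection of polynomials. The two facts that make this work are: (i) fixing the target degree renders homogenization linear, so it respects the differences $F_{\sigma_1}-F_{\sigma_2}$ governing smoothness; and (ii) homogenizing a product $\tilde{\ell}_\tau^{r+1}h$ to degree $d$ factors as $\ell_\tau^{r+1}$ times a homogenization of $h$, preserving order-$(r+1)$ divisibility. Both rely on the identification of $\ell_\tau$ with the homogenization of $\tilde{\ell}_\tau$ recorded in the excerpt, together with the algebraic spline criterion, which is exactly what converts the analytic condition ``$C^r$'' into the algebraic divisibility conditions that transport cleanly under (de)homogenization.
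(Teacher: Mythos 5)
Your proof is correct. The paper does not actually prove this proposition---it is quoted directly from~\cite[Theorem~2.6]{billera1991dimension}---and your homogenization/dehomogenization argument (fixing the target degree $d$ so that homogenization is linear and hence respects the differences $F_{\sigma_1}-F_{\sigma_2}$, then transporting the divisibility conditions of Proposition~\ref{prop:algebraiccriterion} through $\tilde{\ell}_\tau \leftrightarrow \ell_\tau$) is precisely the standard proof of the Billera--Rose theorem, so there is nothing to add or correct.
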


Along with the coning construction, we use the Billera-Schenck-Stillman (BSS) chain complex over $\Delta$. This chain complex $\calR_{\bullet}/\calJ_{\bullet}$ is introduced by Billera in \cite{billera1988homology} and modified by Schenck and Stillman in \cite{schenck1997local}.

\begin{Definition}\label{def:faceideals}
For an edge $\tau\in\Delta^\circ_1$ and vertex $\gamma\in\Delta^\circ_0$, define
\begin{itemize}
\item $J(\tau):=\langle \ell_\tau^{r+1}\rangle$ (the principal ideal of $\RR[x,y,z]$ generated by $\ell_\tau^{r+1}$) and
\item $J(\gamma):=\sum_{\gamma\in\tau} J(\tau)$ (the ideal of $\RR[x,y,z]$ generated by $\{\ell^{r+1}_\tau:\tau\in\Delta^\circ_1\mbox{ and }\gamma\in\tau\}$.
\end{itemize}
\end{Definition}

Let $R=\RR[x,y,z]$.  We define the chain complex $\calR_\bullet$ by
\[
\calR_{\bullet}:=\bigoplus_{\sigma\in\Delta_2} R\xrightarrow{\partial_2} \bigoplus_{\tau\in\Delta^\circ_1} R \xrightarrow{\partial_1}  \bigoplus_{\tau\in\Delta^\circ_0} R,
\]
where the differentials $\partial_2$ and $\partial_1$ are the differentials in the simplicial chain complex of $\Delta$ relative to $\partial\Delta$ with coefficients in $R$.  In other words, the $i$th homology $H_i(\calR_\bullet)$ is isomorphic to $H_i(\Delta,\partial\Delta;R)$, where the latter is the $i$th simplicial homology group of $\Delta$ relative to $\partial\Delta$ with coefficients in $R$.  We also define the subcomplex $\calJ_{\bullet}\subset \calR_{\bullet}$ by
\[
0\xrightarrow{\partial_2}\bigoplus_{\tau\in\Delta^\circ_1}J(\tau)\xrightarrow{\partial_1}\bigoplus_{\gamma\in\Delta^{\circ}_0} J(\gamma)
\]
and the quotient complex (which we call the Billera-Schenck-Stillman chain complex)
\[
\calR_\bullet/\calJ_{\bullet}=\bigoplus_{\sigma\in\Delta_2} R\xrightarrow{\overline{\partial_2}} \bigoplus_{\tau\in\Delta^\circ_1} R/J(\tau) \xrightarrow{\overline{\partial_1}}\bigoplus_{\tau\in\Delta^\circ_0} R/J(\gamma).
\]
We use the following result of Schenck and Stillman.
\begin{Theorem}[Schenck and Stillman \cite{schenck1997family,schenck1997local}]\label{thm:splinedimension}
The dimension of $C^{r}_{d}(\Delta)$ is
\begin{equation}
\dim C^{r}_{d}(\Delta)=L(\Delta,r,d)+\dim H_{1}(\calR_{\bullet}/\calJ_{\bullet})_d,
\end{equation}
where $L(\Delta,r,d)$ is Schumaker's lower bound~\cite{schumaker1979dimension}.
\end{Theorem}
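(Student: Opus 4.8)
The plan is to realize the spline space as the top homology of the Billera--Schenck--Stillman complex and to read off the dimension formula from a graded Euler characteristic computation, with $\dim H_1(\calR_\bullet/\calJ_\bullet)_d$ appearing as the obstruction to exactness. First I would identify $[C^r(\wDelta)]_d$ with the degree-$d$ piece of $H_2(\calR_\bullet/\calJ_\bullet)=\ker\overline{\partial_2}$. An element of $\bigoplus_\sigma R$ is a tuple $(F_\sigma)$, and $\overline{\partial_2}$ records the differences $F_{\sigma_1}-F_{\sigma_2}$ across each interior edge $\tau=\sigma_1\cap\sigma_2$ modulo $J(\tau)=\langle\ell_\tau^{r+1}\rangle$. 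Hence $(F_\sigma)\in\ker\overline{\partial_2}$ exactly when $\ell_\tau^{r+1}\mid F_{\sigma_1}-F_{\sigma_2}$ for every interior edge, which by the algebraic spline criterion (Proposition~\ref{prop:algebraiccriterion}) is precisely the condition that $(F_\sigma)$ glues to an element of $C^r(\wDelta)$. Together with the isomorphism $C^r_d(\Delta)\cong[C^r(\wDelta)]_d$ recalled above, this gives $\dim C^r_d(\Delta)=\dim H_2(\calR_\bullet/\calJ_\bullet)_d$.

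Next I would control the outer homology. The short exact sequence of complexes $0\to\calJ_\bullet\to\calR_\bullet\to\calR_\bullet/\calJ_\bullet\to 0$ induces a long exact sequence in homology. Because $\Delta$ triangulates a contractible, orientable polygonal domain, $H_i(\calR_\bullet)\cong H_i(\Delta,\partial\Delta;R)$ vanishes for $i=0,1$ and equals $R$ for $i=2$. The tail of the long exact sequence reads $\cdots\to H_0(\calR_\bullet)\to H_0(\calR_\bullet/\calJ_\bullet)\to 0$, and since $H_0(\calR_\bullet)=0$ this forces $H_0(\calR_\bullet/\calJ_\bullet)=0$. This is the only topological input actually needed for the dimension formula.

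With the bottom homology killed, I would pass to the graded Euler characteristic in each degree $d$: every term of $\calR_\bullet/\calJ_\bullet$ is a finitely generated graded $R$-module, so in each degree the alternating sum of the dimensions of the homology equals the alternating sum of the dimensions of the chain groups. Using $H_0(\calR_\bullet/\calJ_\bullet)_d=0$ this yields
\[
\dim C^r_d(\Delta)=\dim H_1(\calR_\bullet/\calJ_\bullet)_d+\Big(\dim\big[{\textstyle\bigoplus_\sigma R}\big]_d-\dim\big[{\textstyle\bigoplus_{\tau}R/J(\tau)}\big]_d+\dim\big[{\textstyle\bigoplus_{\gamma}R/J(\gamma)}\big]_d\Big).
\]
The first two terms are immediate: $\dim R_d=\binom{d+2}{2}$, and since $J(\tau)$ is principal and generated in degree $r+1$ we have $\dim[R/J(\tau)]_d=\binom{d+2}{2}-\binom{d-r+1}{2}$.

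The main obstacle is to show that the parenthesized Euler characteristic is exactly Schumaker's lower bound $L(\Delta,r,d)$, which amounts to computing the vertex term $\dim[R/J(\gamma)]_d$. Here $J(\gamma)$ is generated by the powers $\ell_\tau^{r+1}$ for the interior edges $\tau$ through $\gamma$; after coning, all these forms vanish on the common line over $\gamma$, so a linear change of coordinates reduces the Hilbert function of $R/J(\gamma)$ to that of an ideal of powers of distinct linear forms in two variables, depending only on the number of distinct slopes at $\gamma$. I would compute this local Hilbert function explicitly and verify, term by term, that summing the three contributions reproduces the closed form of Schumaker's bound. This final reconciliation of the algebraically computed Euler characteristic with Schumaker's combinatorial formula is the delicate bookkeeping step, since it is precisely where the dependence on local geometry enters and where one must track the contributions at vertices of low valence or with few distinct slopes.
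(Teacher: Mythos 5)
Your proposal is correct and is essentially the argument this theorem rests on: the paper gives no proof of its own (it cites Schenck--Stillman), and your route --- identifying $C^r_d(\Delta)$ with $H_2(\calR_\bullet/\calJ_\bullet)_d$ via the algebraic spline criterion, forcing $H_0(\calR_\bullet/\calJ_\bullet)=0$ from $H_0(\calR_\bullet)=0$, and reading the formula off the graded Euler characteristic --- is exactly the cited proof, consistent with the paper's own remark that $L(\Delta,d,r)$ equals the Euler characteristic of $\calR_\bullet/\calJ_\bullet$ in degree $d$. The one step you leave schematic, matching the vertex terms $\dim[R/J(\gamma)]_d$ with the closed form in Proposition~\ref{prop:SchumakerLower}, is genuine but routine bookkeeping: after the change of coordinates each $J(\gamma)$ is a power ideal in two variables whose Hilbert function is given by Corollary~\ref{cor:gerschenck} (equivalently by the standard free resolution with twists $\alpha_\gamma$, $\alpha_\gamma+1$ and multiplicities $\mu_\gamma$, $\nu_\gamma$), and summing the three chain-group contributions then reproduces Schumaker's formula.
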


It is shown in~\cite{schenck1997family} that Schumaker's lower bound -- first derived using methods from numerical analysis in~\cite{schumaker1979dimension} -- is the Euler characteristic of $\calR_\bullet/\calJ_\bullet$ in degree $d$.  From this perspective, we give an explicit formula for $L(\Delta,d,r)$ for use later in the paper.  We set some additional notation.  If $A,B\in\ZZ$ are non-negative integers, we use the following convention for the binomial coefficient $\binom{A}{B}$:
\[
\dbinom{A}{B}=
\begin{cases}
\dfrac{A!}{B!(A-B)!} & B\le A\\
0 & \mbox{otherwise}
\end{cases}.
\]
For each vertex $\gamma\in\Delta^\circ_0$ we let $s_\gamma$ be the number of slopes of edges containing $\gamma$.  Let $\alpha_\gamma$ and $\nu_\gamma$ be the quotient and remainder when $s_\gamma(r+1)$ is divided by $s_\gamma-1$; that is, $s_\gamma(r+1)=\alpha_\gamma(s_\gamma-1)+\nu_\gamma$, with $\alpha_\gamma,\nu_\gamma\in\ZZ$ and $0\le \nu_\gamma<s_\gamma-1$.  Put $\mu_\gamma=s_\gamma-1-\nu_\gamma$.  Then we have the following formula for $L(\Delta,d,r)$.

\begin{Proposition}\label{prop:SchumakerLower}
Using the above notation, Schumaker's lower bound for $C^r_d(\Delta)$ can be expressed as
\[
L(\Delta,d,r)=\binom{d+2}{2}+\left(|\Delta^\circ_1|-\sum_{\gamma\in\Delta^\circ_0}s_\gamma\right)\binom{d+1-r}{2}+\sum_{\gamma\in\Delta^\circ_0} \left( \mu_\gamma\binom{d+2-\alpha_\gamma}{2}+\nu_\gamma\binom{d+1-\alpha_\gamma}{2} \right).
\]
\end{Proposition}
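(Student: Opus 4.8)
The plan is to use the identification, recalled above from~\cite{schenck1997family}, of Schumaker's bound $L(\Delta,d,r)$ with the Euler characteristic of the Billera--Schenck--Stillman complex $\calR_\bullet/\calJ_\bullet$ in degree $d$. Since the Euler characteristic of a complex of finite-dimensional vector spaces may be computed on the chain level, we have
\[
L(\Delta,d,r)=\sum_{\sigma\in\Delta_2}\dim R_d-\sum_{\tau\in\Delta^\circ_1}\dim (R/J(\tau))_d+\sum_{\gamma\in\Delta^\circ_0}\dim (R/J(\gamma))_d,
\]
so the proof reduces to computing these three graded dimensions and assembling them.

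The first two terms are immediate. For each facet, $\dim R_d=\binom{d+2}{2}$. For an interior edge $\tau$, the linear form $\ell_\tau$ is a nonzerodivisor, so multiplication by $\ell_\tau^{r+1}$ identifies $J(\tau)_d=\langle\ell_\tau^{r+1}\rangle_d$ with $R_{d-r-1}$; hence $\dim (R/J(\tau))_d=\binom{d+2}{2}-\binom{d+1-r}{2}$, independently of $\tau$. These contribute $|\Delta_2|\binom{d+2}{2}$ and $-|\Delta^\circ_1|\left(\binom{d+2}{2}-\binom{d+1-r}{2}\right)$, respectively.

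The heart of the argument, and the main obstacle, is the vertex term $\dim(R/J(\gamma))_d$. I would first reduce to two variables: all edges through an interior vertex $\gamma$ cone to planes containing the common line spanned by $\hat\gamma$, so after a linear change of coordinates every generator $\ell_\tau^{r+1}$ of $J(\gamma)$ lies in a polynomial subring $\RR[u,v]$ in two of the variables, with exactly $s_\gamma$ pairwise non-proportional forms $\ell_1,\dots,\ell_{s_\gamma}$ appearing (one per distinct slope). Thus $R/J(\gamma)\cong B[w]$, where $B=\RR[u,v]/\langle \ell_1^{r+1},\dots,\ell_{s_\gamma}^{r+1}\rangle$, and consequently $\dim(R/J(\gamma))_d=\sum_{k=0}^d\dim B_k$. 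The key input is that ideals generated by powers of distinct linear forms in two variables achieve the generic (Fr\"oberg) Hilbert function; concretely, the $\ell_i^{r+1}$ correspond to points on the rational normal curve of degree $r+1$, which lie in linearly general position, and this upgrades to the expected number of independent conditions in every degree. Truncating the generic Hilbert series $(1-t^{r+1})^{s_\gamma}/(1-t)^2$ at its first nonpositive coefficient is precisely what produces the quotient and remainder $\alpha_\gamma,\nu_\gamma$ (and $\mu_\gamma=s_\gamma-1-\nu_\gamma$) of the division of $s_\gamma(r+1)$ by $s_\gamma-1$; summing the resulting Hilbert function over $k\le d$ (equivalently, reading off the coefficient of $t^d$ in $\mathrm{HS}(B,t)/(1-t)$) yields
\[
\dim(R/J(\gamma))_d=\binom{d+2}{2}-s_\gamma\binom{d+1-r}{2}+\mu_\gamma\binom{d+2-\alpha_\gamma}{2}+\nu_\gamma\binom{d+1-\alpha_\gamma}{2}.
\]
I expect verifying this truncation identity -- that the generic Hilbert function is attained and that its partial sums collapse to exactly these two binomial terms -- to be the most delicate bookkeeping in the proof.

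Finally I would assemble the three contributions. The facet and edge terms give a coefficient $(|\Delta_2|-|\Delta^\circ_1|)\binom{d+2}{2}$ on the leading binomial; applying the Euler relation $|\Delta_2|-|\Delta^\circ_1|+|\Delta^\circ_0|=1$ (valid since $|\Delta|$ is a disk, so $\calR_\bullet$ computes the homology of a contractible domain relative to its boundary) rewrites this as $(1-|\Delta^\circ_0|)\binom{d+2}{2}$. Distributing $-|\Delta^\circ_0|\binom{d+2}{2}=-\sum_{\gamma}\binom{d+2}{2}$ across the vertex sum and cancelling the $\binom{d+2}{2}$ appearing inside each $\dim(R/J(\gamma))_d$ leaves exactly $\binom{d+2}{2}+|\Delta^\circ_1|\binom{d+1-r}{2}-\sum_\gamma s_\gamma\binom{d+1-r}{2}+\sum_\gamma\left(\mu_\gamma\binom{d+2-\alpha_\gamma}{2}+\nu_\gamma\binom{d+1-\alpha_\gamma}{2}\right)$, which is the claimed formula after grouping the two $\binom{d+1-r}{2}$ terms.
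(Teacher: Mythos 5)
Your proposal is correct and follows exactly the route the paper intends: the paper states Proposition~\ref{prop:SchumakerLower} without proof, immediately after recalling from~\cite{schenck1997family} that $L(\Delta,d,r)$ is the Euler characteristic of $\calR_\bullet/\calJ_\bullet$ in degree $d$, and your term-by-term computation of that Euler characteristic (facets contributing $\binom{d+2}{2}$, interior edges contributing $\binom{d+2}{2}-\binom{d+1-r}{2}$, and vertices handled via the Hilbert function of a power ideal in two variables, i.e.\ the Geramita--Schenck result quoted later as Corollary~\ref{cor:gerschenck}) is precisely the omitted verification. I checked the two places where errors could hide -- the vertex identity $\dim(R/J(\gamma))_d=\binom{d+2}{2}-s_\gamma\binom{d+1-r}{2}+\mu_\gamma\binom{d+2-\alpha_\gamma}{2}+\nu_\gamma\binom{d+1-\alpha_\gamma}{2}$, which follows by summing the degreewise identity $\dim B_k=(k+1)-s_\gamma[k-r]_+ +\mu_\gamma[k+1-\alpha_\gamma]_+ +\nu_\gamma[k-\alpha_\gamma]_+$, and the final assembly via $|\Delta_2|-|\Delta^\circ_1|+|\Delta^\circ_0|=1$ -- and both are correct as you describe them.
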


We will occasionally consider splines on a partition $\Delta$ which is not a triangulation but a \textit{rectilinear partition} -- in this case the polygonal domain $|\Delta|$ is subdivided into polygonal cells which meet along edges.  All definitions and results stated thus far carry over to rectilinear partitions.  The class of rectilinear partitions we will have occcasion to use are called \textit{quasi-cross-cut} partitions.

\begin{Definition}\label{def:quasi-cross-cut}
A rectilinear partition $\Delta$ is a quasi-cross-cut partition if every edge of $\Delta$ is connected to the boundary of $\Delta$ by a sequence of adjacent edges that all have the same slope.
\end{Definition}

The following result was first proved by Chui and Wang~\cite{Chui-Wang-1983}; we give the formulation of the result that appears in~\cite{schenck1997family}.

\begin{Proposition}\label{Prop:quasi-cross-cut}
If $\Delta$ is a quasi-cross-cut partition then
$
\dim C^r(\Delta)=L(\Delta,d,r)
$
for all $d,r\ge 0$.
\end{Proposition}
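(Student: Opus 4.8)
The plan is to reduce the equality $\dim C^r_d(\Delta)=L(\Delta,d,r)$ to the vanishing of $H_1(\calR_\bullet/\calJ_\bullet)_d$ and then to exploit the cross-cut structure to establish that vanishing. By Theorem~\ref{thm:splinedimension} we have $\dim C^r_d(\Delta)=L(\Delta,d,r)+\dim H_1(\calR_\bullet/\calJ_\bullet)_d$, so it suffices to prove $H_1(\calR_\bullet/\calJ_\bullet)=0$ in every degree.

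For this I would pass to the short exact sequence of complexes $0\to\calJ_\bullet\to\calR_\bullet\to\calR_\bullet/\calJ_\bullet\to 0$ and examine its long exact sequence in homology. Since $|\Delta|$ is a polygonal domain, hence homeomorphic to a disk with nonempty boundary, the relative homology $H_i(\Delta,\partial\Delta;R)$ vanishes for $i=0,1$; as $H_i(\calR_\bullet)\cong H_i(\Delta,\partial\Delta;R)$, the relevant stretch of the long exact sequence reads
\[
0=H_1(\calR_\bullet)\to H_1(\calR_\bullet/\calJ_\bullet)\to H_0(\calJ_\bullet)\to H_0(\calR_\bullet)=0,
\]
so that $H_1(\calR_\bullet/\calJ_\bullet)\cong H_0(\calJ_\bullet)=\coker\!\left(\partial_1\colon\bigoplus_{\tau\in\Delta^\circ_1}J(\tau)\to\bigoplus_{\gamma\in\Delta^\circ_0}J(\gamma)\right)$. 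It then remains to show this boundary map is surjective.

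To prove surjectivity I would use that $J(\gamma)$ is generated by the forms $\ell_\tau^{r+1}$ over edges $\tau\ni\gamma$, so it is enough to realize each generator $\ell_\tau^{r+1}$ (placed in the $\gamma$-component) as an image. The quasi-cross-cut hypothesis guarantees that $\tau$ is joined to $\partial\Delta$ by a chain of adjacent edges of equal slope; since adjacent segments of equal slope are collinear, this chain lies on a single line $L$ through $\gamma$, and every edge on $L$ may be assigned the one linear form $\ell=\ell_\tau$. Orienting the edges $\tau=\tau_0,\tau_1,\dots,\tau_k$ of $L$ consecutively from $\gamma$ out to a boundary vertex $b$, the differential telescopes,
\[
\partial_1\!\left(\sum_{i=0}^{k}\tau_i\right)=\ell^{r+1}\bigl(e_b-e_\gamma\bigr),
\]
and because $b\notin\Delta^\circ_0$ the term $e_b$ is absent from the target, leaving $\pm\ell^{r+1}e_\gamma=\pm\ell_\tau^{r+1}e_\gamma$. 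Thus every generator lies in the image, $\partial_1$ is surjective, $H_0(\calJ_\bullet)=0$, and the desired formula follows.

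The routine inputs are the identification $H_i(\calR_\bullet)\cong H_i(\Delta,\partial\Delta;R)$ and the disk computation of relative homology. The crux of the argument --- and the only place the hypothesis enters --- is the geometric observation that the quasi-cross-cut condition produces, through each interior vertex and in the direction of each incident edge, a genuine straight cross-cut reaching $\partial\Delta$ along which $\ell_\tau$ is constant; this is exactly what makes the telescoping collapse to a single generator. I expect the main care to be needed in formalizing ``adjacent plus equal slope implies collinear'' and in verifying that the collinear chain emanating from $\gamma$ is covered by honest edges of $\Delta$ (so the telescoping sum is a legitimate chain), together with careful bookkeeping of the orientations and signs in $\partial_1$.
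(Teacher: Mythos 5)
Your proof is correct, but there is nothing in the paper to compare it against: the paper gives no proof of Proposition~\ref{Prop:quasi-cross-cut}, it simply quotes the result, attributing it to Chui and Wang~\cite{Chui-Wang-1983} and taking the formulation from~\cite{schenck1997family}. What you have written is essentially the homological proof in the style of Schenck--Stillman: reduce via Theorem~\ref{thm:splinedimension} to showing $H_1(\calR_\bullet/\calJ_\bullet)=0$, identify $H_1(\calR_\bullet/\calJ_\bullet)\cong H_0(\calJ_\bullet)$ through the long exact sequence and the vanishing $H_0(\calR_\bullet)=H_1(\calR_\bullet)=0$ (this identification is exactly the first step of the paper's proof of Lemma~\ref{lem:presentation_H0J}), and then kill $H_0(\calJ_\bullet)$ by exhibiting each generator $\ell_\tau^{r+1}e_\gamma$ of $\bigoplus_\gamma J(\gamma)$ as a telescoped boundary along a cross-cut. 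Chui--Wang's original argument predates this machinery and proceeds by direct analysis of the smoothness conditions, whereas your route buys the statement from chain-level bookkeeping; since the paper has already set up the Billera--Schenck--Stillman complex and asserts that it carries over to rectilinear partitions (a fact your argument needs, as quasi-cross-cut partitions are generally not triangulations), yours is the natural proof in this context. The only details to tighten are the ones you yourself flagged: the chain fed into $\partial_1$ should be written $\sum_i \ell^{r+1}e_{\tau_i}$, which is legitimate because adjacent edges of equal slope share a vertex and hence are collinear, so $J(\tau_i)=\langle \ell^{r+1}\rangle$ for every edge in the chain; the quasi-cross-cut chain through $\tau$ may reach the boundary on either side of $\gamma$, so one should truncate the path at the first vertex lying on $\partial\Delta$ and note that each edge of the truncated path has at least one interior endpoint, hence is an interior edge and a genuine summand of the source $\bigoplus_{\varepsilon\in\Delta^\circ_1}J(\varepsilon)$; and the resulting sign $\pm$ on $\ell_\tau^{r+1}e_\gamma$ is harmless because the image of $\partial_1$ is an $R$-submodule, so $R$-linearity then yields all of $J(\gamma)e_\gamma$ and surjectivity follows.
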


\subsection{Case of a single totally interior edge}\label{ss:one-interior-edge-setup}
In this section we specialize to the case of interest in this paper.  That is, $\Delta$ is a triangulation with only two interior vertices $v_1$ and $v_2$ connected by a single totally interior edge $\tau$.  
%%%Michael: June 25
There are two cases in which the dimension formula on such a triangulation is trivial, which we record in the following proposition.

\begin{Proposition}\label{prop:trivialcases}
Let $\Delta$ be a triangulation with a single totally interior edge $\tau$ connecting interior vertices $v_1$ and $v_2$.  Suppose that either
\begin{itemize}
\item the interior edge $\tau$ has the same slope as another edge meeting $\tau$ at either $v_1$ or $v_2$ or
\item the number of slopes of edges meeting at either $v_1$ or $v_2$ is at least $r+3$.
\end{itemize}
Then $\dim C^r_d(\Delta)=L(\Delta,d,r)$ for all integers $d,r\ge 0$.
\end{Proposition}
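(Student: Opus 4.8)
The plan is to apply Theorem~\ref{thm:splinedimension}: since $\dim C^r_d(\Delta) = L(\Delta,d,r) + \dim H_1(\calR_\bullet/\calJ_\bullet)_d$, the proposition is equivalent to the vanishing $H_1(\calR_\bullet/\calJ_\bullet)_d = 0$ for all $d$, i.e.\ $H_1(\calR_\bullet/\calJ_\bullet) = 0$ as a graded module. I would first replace this by a statement about $\calJ_\bullet$ alone. The short exact sequence of complexes $0 \to \calJ_\bullet \to \calR_\bullet \to \calR_\bullet/\calJ_\bullet \to 0$ yields a long exact sequence in homology, and since $H_i(\calR_\bullet) \cong H_i(\Delta,\partial\Delta;R)$ vanishes for $i = 0,1$ (as $|\Delta|$ is homeomorphic to a disk), this collapses to an isomorphism $H_1(\calR_\bullet/\calJ_\bullet) \cong H_0(\calJ_\bullet)$. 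Hence it suffices to show $H_0(\calJ_\bullet) = 0$, i.e.\ that $\partial_1\colon \bigoplus_{\tau'\in\Delta^{\circ}_1} J(\tau') \to J(v_1)\oplus J(v_2)$ is surjective; the target has exactly two summands because $v_1$ and $v_2$ are the only interior vertices of $\Delta$.

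Next I would compute the image of $\partial_1$ explicitly. Write $J'(v_i) := \sum_{\tau'\ni v_i,\ \tau'\neq\tau} J(\tau')$ for the ideal generated by the $(r+1)$-st powers of the (homogenized) edges meeting $v_i$ other than $\tau$. An interior edge with both endpoints on $\partial\Delta$ has zero boundary in the relative complex and so contributes nothing; the edges at $v_i$ other than $\tau$ contribute $J'(v_i)$ to the $i$-th summand; and $\tau$ contributes the diagonal submodule $\{(j,-j) : j \in \langle\ell_\tau^{r+1}\rangle\}$. Thus $\img\partial_1 = \big(J'(v_1)\oplus J'(v_2)\big) + \{(j,-j): j\in\langle\ell_\tau^{r+1}\rangle\}$. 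Because $J(v_i) = J'(v_i) + \langle\ell_\tau^{r+1}\rangle$, a short diagram chase shows that $\partial_1$ is surjective the moment $\ell_\tau^{r+1}$ lies in $J'(v_1)$ or in $J'(v_2)$: if, say, $\ell_\tau^{r+1}\in J'(v_1)$, then $J(v_1) = J'(v_1)$ is already in the image, the diagonal relation pushes all of $\langle\ell_\tau^{r+1}\rangle$ into the second summand, and $J'(v_2)$ is present as well, recovering $J(v_2)$. The entire proposition therefore reduces to the single membership $\ell_\tau^{r+1}\in J'(v_i)$ for one of $i\in\{1,2\}$.

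It then remains to establish this membership at the vertex where the hypothesis holds; say it is $v_1$. Every edge at $v_1$ lies on a line through the cone point $\hat v_1$, so each defining form $\ell_{\tau'}$ (for $\tau'\ni v_1$), as well as $\ell_\tau$, lies in the $2$-dimensional space $V\subseteq R_1$ of linear forms vanishing at $\hat v_1$. Fixing a basis $u,v$ of $V$, all the forms in question become degree-$(r+1)$ forms in $u,v$, living in the $(r+2)$-dimensional space $\RR[u,v]_{r+1}$. In the first case some edge $\tau'\neq\tau$ at $v_1$ has the same slope as $\tau$ and hence lies on the same line, so $\ell_\tau^{r+1}$ is, up to scalar, one of the generators of $J'(v_1)$ and the membership is immediate. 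In the second case the $s_{v_1}\ge r+3$ distinct slopes at $v_1$ produce $s_{v_1}-1\ge r+2$ pairwise non-proportional linear forms in $V$, namely those of the edges $\neq\tau$; by the classical fact that the $(r+1)$-st powers of any $r+2$ pairwise non-proportional binary linear forms are linearly independent --- equivalently, that $r+2$ points of a rational normal curve in $\PP^{r+1}$ span, or, by apolarity, that no nonzero binary form of degree $r+1$ vanishes at $r+2$ distinct points --- these powers span all of $\RR[u,v]_{r+1}$. Since $\ell_\tau^{r+1}\in\RR[u,v]_{r+1}$, it is an $\RR$-linear combination of them and therefore lies in $J'(v_1)$.

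I expect the main obstacle to be the middle step: pinning down the image of $\partial_1$ in the relative Billera--Schenck--Stillman complex and cleanly reducing surjectivity to the membership $\ell_\tau^{r+1}\in J'(v_i)$, together with invoking the second-case spanning statement in exactly the right form. The homological reduction and the two membership verifications are then routine.
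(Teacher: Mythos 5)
Your proof is correct, but it takes a genuinely different route from the paper: the paper's entire proof is a citation to the freeness/vanishing criterion of Schenck and Stillman (\cite[Theorem~5.2]{schenck1997family}), which directly yields $H_1(\calR_\bullet/\calJ_\bullet)=0$ under either hypothesis, whereas you re-derive the needed vanishing from scratch. Your ingredients are sound: the isomorphism $H_1(\calR_\bullet/\calJ_\bullet)\cong H_0(\calJ_\bullet)$ via the long exact sequence and $H_1(\calR_\bullet)=H_0(\calR_\bullet)=0$ is exactly the reduction the paper itself performs later in the proof of Lemma~\ref{lem:presentation_H0J}; your computation of $\img\partial_1=\bigl(J'(v_1)\oplus J'(v_2)\bigr)+\{(j,-j):j\in J(\tau)\}$ is right (edges with both endpoints on $\partial\Delta$ die, edges with one interior endpoint contribute their ideal to that summand, and $\tau$ gives the anti-diagonal); the chase showing that $\ell_\tau^{r+1}\in J'(v_i)$ for some $i$ forces surjectivity works; and both membership verifications are valid --- in the second case the count is right (at least $r+2$ pairwise non-proportional forms among the edges at $v_i$ other than $\tau$, all lying in the two-dimensional space of linear forms vanishing at $\hat v_i$), and the classical fact that $(r+1)$-st powers of $r+2$ pairwise non-proportional binary linear forms are linearly independent, hence span $\RR[u,v]_{r+1}$, is correctly invoked. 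One remark: in the paper's notation your $J'(v_i)$ is exactly $J_i$, and once Lemma~\ref{lem:presentation_H0J} is available your middle step can be compressed --- $\ell_\tau^{r+1}\in J_i$ gives $1\in J_i:J(\tau)$, so $J_1:J(\tau)+J_2:J(\tau)=R$ and the quotient presenting $H_1(\calR_\bullet/\calJ_\bullet)$ vanishes --- avoiding the explicit diagram chase. What the two approaches buy: the paper's citation is economical and inherits the stronger conclusion (freeness of $C^r(\wDelta)$); your argument makes the proposition self-contained and exposes precisely where each hypothesis enters (proportionality of forms in the first case, the spanning of binary powers in the second).
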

\begin{proof}
The result follows from~\cite[Theorem~5.2]{schenck1997family}.  In either case, $H_1(\calR_\bullet/\calJ_\bullet)=0$ and $C^r(\wDelta)$ is a free module over the polynomial ring.
\end{proof}

\begin{Assumptions}\label{assumptions}
In the remainder of the paper, we use the following notation and assumptions whenever we have a triangulation $\Delta$ with a single totally interior edge $\tau$ connecting interior vertices $v_1$ and $v_2$.
\begin{itemize}
\item We assume no edge adjacent to $v_1$ or $v_2$ has the same slope as $\tau$.
\item We write $p$ (respectively $q$) for the number of edges \textit{different from} $\tau$ which are adjacent to $v_1$ (respectively $v_2$).
\item We write $s$ (respectively $t$) for the number of different slopes achieved by the edges \textit{different from }$\tau$ which contain $v_1$ (respectively $v_2$).
\item We assume (without loss) that $2\le s\le t\le r+1$.
\end{itemize}
\end{Assumptions}

\begin{Remark}
We explain the last bullet point in Assumptions~\ref{assumptions}. Since we assume no other edge besides $\tau$ has a slope equal to the slope of $\tau$, $v_1$ is surrounded by $p+1$ edges taking on $s+1$ different slopes and $v_2$ is surrounded by $q+1$ edges taking on $t+1$ different slopes.  See Figure~\ref{fig:s3t4}. We obtain $s\le t$ by simply relabeling $v_1$ and $v_2$ if necessary.  If $s=1$ then either $p=1$ or $p=2$.  In either case it is not possible for $\Delta$ to be a triangulation.  (If $p=2$ it would be possible to have a so-called $T$-juncture or `hanging vertex' at $v_1$, but we do not allow these under our definition of a triangulation.)  Hence $2\leq s, t$.  We can also assume that $s+1$ and $t+1$ are both at most $r+2$ by Proposition~\ref{prop:trivialcases}.  Putting these all together, we arrive at $2\le s\le t\le r+1$.
\end{Remark}

%%%END Michael: June 25

% We also assume that no edge adjacent to $v_1$ or $v_2$ has a slope which is equal to the slope of $\tau$ besides $\tau$ itself.  If a different edge has the same slope as $\tau$, then $\Delta$ becomes a quasi-cross-cut partition, $H_1(\calR_\bullet/\calJ_\bullet)$ vanishes, and the dimension of splines on such partitions coincides with Schumaker's lower bound (see Proposition~\ref{Prop:quasi-cross-cut}).  We write $p$ (respectively $q$) for the number of edges different from $\tau$ which are adjacent to $v_1$ (respectively $v_2$).  We further suppose that these $p$ (respectively $q$) differenet edges have $s$ (respectively $t$) different slopes.  
%Since we assume no other edge besides $\tau$ has a slope equal to the slope of $\tau$, $v_1$ is surrounded by $p+1$ edges taking on $s+1$ different slopes and $v_2$ is surrounded by $q+1$ edges taking on $t+1$ different slopes.  See Figure~\ref{fig:s3t4}.

\begin{figure}
\begin{tikzpicture}
\tikzstyle{dot}=[circle,fill=black,inner sep=1 pt];

\def \epb {.5};
\def \eps {.25};

\node[dot] at (-1,0){};
\node[dot] at (1,0){};
\node[dot] at (2,1){};
\node[dot] at (1,2){};
\node[dot] at (0,1){};
\node[dot] at (-1,{2-\eps}){};
\node[dot] at (-2,{1}){};
\node[dot] at ({-2-\eps},{-1-\eps}){};
\node[dot] at ({-1},{-2+\epb}){};
\node[dot] at (0,-1){};
\node[dot] at ({1+\epb+\eps},-1){};

\draw[thick,black] (-1,0)node[left]{$v_1$}--node[above]{$\tau$}(1,0)node[right]{$v_2$};
\draw[thick,black] (1,0)--(2,1)--(1,2)--(0,1)--(-1,{2-\eps})--(-2,{1})--(-1,0);

\draw[thick,black] (-1,0)--({-2-\eps},{-1-\eps})--({-1},{-2+\epb})--(0,-1)--({1+\epb+\eps},-1)--(1,0);

\draw[thick,black] (1,0)--(1,2) (1,0)--(0,1) (1,0)--(0,-1);
\draw[thick,black] (-1,0)--(-1,{2-\eps}) (-1,0)--(0,1) (-1,0)--({-1},{-2+\epb}) (-1,0)--(0,-1);

\draw[thick,black] (-2,{1})--({-2-\eps},{-1-\eps});
\draw[thick,black] (2,1)--({1+\epb+\eps},-1);

\end{tikzpicture}
\caption{A triangulation with a single totally interior edge, $p=6$, $s=3$, $q=5$, and $t=4$.  A choice of coordinates that realizes this data is $v_1=(-1,0),v_2=(1,0)$ and, for the remaining vertices (read counterclockwise around the boundary, starting with the vertex northeast of $v_2$), $(2,1),(1,2),(0,1),(-1,7/4),(-2,1),(-9/4,-5/4),(-1,-3/2),(0,-1),$ and $(1,7/4)$.}
\label{fig:s3t4}
\end{figure}
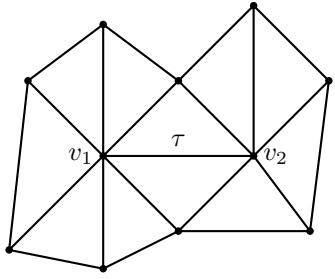

With this setup, consider the homology module $H_1(\calR_\bullet/\calJ_\bullet)$ where $\calR_\bullet/\calJ_\bullet$ is the Billera-Schenck-Stillman chain complex.  It turns out that $H_1(\calR_\bullet/\calJ_\bullet)$ is graded isomorphic to a shift of the quotient of the polynomial ring $R=\RR[x,y,z]$ by an ideal.

\begin{Lemma}\label{lem:presentation_H0J}
If $\Delta$ has only one totally interior edge $\tau$ , then
\begin{equation}
\label{eq:H0Presentation}
    H_{1}(\calR_{\bullet}/\calJ_{\bullet})\simeq R/(J_{1}:J(\tau)+J_{2}:J(\tau))(-r-1),
\end{equation}
where
\begin{equation}\label{eq:JiDef}
J_{i}=\sum_{\substack{\varepsilon\in\Delta^\circ_1\\v_{i}\in\varepsilon,\varepsilon\neq\tau}}J(\varepsilon) \mbox{ for } i=1,2.
\end{equation}
\end{Lemma}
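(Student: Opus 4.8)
The plan is to sidestep a direct computation of $H_1(\calR_\bullet/\calJ_\bullet)=\ker\overline{\partial_1}/\img\overline{\partial_2}$ and instead extract the homology from the short exact sequence of chain complexes $0\to\calJ_\bullet\to\calR_\bullet\to\calR_\bullet/\calJ_\bullet\to 0$. The complex $\calR_\bullet$ computes the relative simplicial homology $H_\bullet(\Delta,\partial\Delta;R)$, and since $|\Delta|$ is a disk all of this homology is concentrated in top degree, so $H_0(\calR_\bullet)=H_1(\calR_\bullet)=0$. Feeding this into the associated long exact sequence, the segment $H_1(\calR_\bullet)\to H_1(\calR_\bullet/\calJ_\bullet)\xrightarrow{\partial}H_0(\calJ_\bullet)\to H_0(\calR_\bullet)$ has zeros on both ends, so the connecting map is an isomorphism $H_1(\calR_\bullet/\calJ_\bullet)\cong H_0(\calJ_\bullet)$, and it respects the grading because all maps in the sequence of graded modules are degree-preserving. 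This reduces the problem to understanding $H_0(\calJ_\bullet)$, which is governed by a single differential since the term of $\calJ_\bullet$ in homological degree $2$ is zero.

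Next I would compute $H_0(\calJ_\bullet)=\coker\big(\bigoplus_{\varepsilon\in\Delta^\circ_1}J(\varepsilon)\xrightarrow{\partial_1}J(v_1)\oplus J(v_2)\big)$ explicitly. Since $v_1,v_2$ are the only interior vertices, the interior edges are $\tau$ together with the $p$ edges at $v_1$ and the $q$ edges at $v_2$ (should any interior edge join two boundary vertices it would map to $0$ under $\partial_1$ and would not affect the cokernel). Tracking the simplicial boundary -- each non-$\tau$ edge at $v_i$ maps into the $J(v_i)$ summand, while $\tau$ maps to $(-g,g)$ for $g\in J(\tau)$ because $\partial\tau=v_2-v_1$ -- the image is $M=\{(F_1-g,\,F_2+g):F_1\in J_1,\ F_2\in J_2,\ g\in J(\tau)\}$, where I use $\sum_{\varepsilon\ni v_i}J(\varepsilon)=J_i$ and $J(v_i)=J_i+J(\tau)$.

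Finally I would realize the claimed module via the map $\psi\colon R(-r-1)\to H_0(\calJ_\bullet)$, $c\mapsto\overline{(\ell_\tau^{r+1}c,\,0)}$. For surjectivity: given $(u,v)$, write $u=F_1+g_1$, $v=F_2+g_2$ with $F_i\in J_i$ and $g_i\in J(\tau)$; subtracting $(F_1,F_2)\in M$ and adding $(g_2,-g_2)\in M$ shows $(u,v)\equiv(g_1+g_2,0)$, which lies in the image of $\psi$. For the kernel, $\psi(c)=0$ forces, by comparing second coordinates, $g=-F_2\in J(\tau)\cap J_2=\ell_\tau^{r+1}(J_2:\ell_\tau^{r+1})$, and then, by comparing first coordinates, $\ell_\tau^{r+1}c\in J_1+\ell_\tau^{r+1}(J_2:\ell_\tau^{r+1})$; a short manipulation ($\ell_\tau^{r+1}c=F_1+\ell_\tau^{r+1}b$ with $b\in J_2:\ell_\tau^{r+1}$ gives $c-b\in J_1:\ell_\tau^{r+1}$) identifies this with $c\in(J_1:J(\tau))+(J_2:J(\tau))$. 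Since $\psi$ raises degree by $r+1$, the shift $(-r-1)$ appears, yielding $H_0(\calJ_\bullet)\cong R/(J_1:J(\tau)+J_2:J(\tau))(-r-1)$ and hence the lemma.

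The main obstacle is this kernel computation: the coupling of the two vertex summands through the single parameter $g\in J(\tau)$ is exactly what produces the colon ideals, and one must recognize the intersection $J(\tau)\cap J_2$ as $\ell_\tau^{r+1}(J_2:\ell_\tau^{r+1})$ and then disentangle $J_1+\ell_\tau^{r+1}(J_2:\ell_\tau^{r+1})$ into the sum $(J_1:J(\tau))+(J_2:J(\tau))$. Everything else -- the vanishing of $H_0(\calR_\bullet)$ and $H_1(\calR_\bullet)$, the bookkeeping of interior edges, and the surjectivity of $\psi$ -- is routine once the two-fan structure of the triangulation is in hand.
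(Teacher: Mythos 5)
Your proof is correct, and while its first step coincides with the paper's, the rest takes a genuinely different route. Both arguments begin identically: the long exact sequence of $0\to\calJ_\bullet\to\calR_\bullet\to\calR_\bullet/\calJ_\bullet\to 0$ plus $H_0(\calR_\bullet)=H_1(\calR_\bullet)=0$ gives $H_1(\calR_\bullet/\calJ_\bullet)\cong H_0(\calJ_\bullet)$. From there, however, the paper does not compute the cokernel by hand; it invokes the Schenck--Stillman presentation of $H_0(\calJ_\bullet)$ (Lemma~\ref{lem:H0pres}, quoted from~\cite{schenck1997local}), in which generators are indexed by interior edges and the relations consist of the non-totally-interior generators together with the local syzygies at each interior vertex. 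Trimming away the generators $[e_\varepsilon]$, $\varepsilon\neq\tau$, leaves $R[e_\tau]$ modulo $K_1+K_2$, and each $K_i$ is visibly $J_i:\ell_\tau^{r+1}$. You instead compute $H_0(\calJ_\bullet)=\coker\partial_1$ directly from the definition of $\calJ_\bullet$: identifying the image $M=\{(F_1-g,\,F_2+g)\}$, exhibiting the explicit map $\psi(c)=\overline{(\ell_\tau^{r+1}c,0)}$, and checking surjectivity and the kernel. The two arguments extract the colon ideals from the same coupling (the condition $a\,\ell_\tau^{r+1}\in J_i$), but yours is self-contained---it never uses the presentation lemma from~\cite{schenck1997local}---at the price of hands-on cokernel bookkeeping, whereas the paper's is shorter given the cited lemma and generalizes more directly to triangulations with several totally interior edges. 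One point you should spell out: your manipulation establishes $\ker\psi\subseteq (J_1:J(\tau))+(J_2:J(\tau))$, and the reverse inclusion, though routine, is needed for the isomorphism; for $c=c_1+c_2$ with $c_i\in J_i:J(\tau)$ one writes
\begin{equation*}
\bigl(\ell_\tau^{r+1}c,\,0\bigr)=\bigl(F_1-g,\;F_2+g\bigr),\qquad F_1=\ell_\tau^{r+1}c_1\in J_1,\quad g=-\ell_\tau^{r+1}c_2\in J(\tau),\quad F_2=\ell_\tau^{r+1}c_2\in J_2,
\end{equation*}
which exhibits $\psi(c)=0$.
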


This lemma is a consequence of presentation for $H_1(\calR_\bullet/\calJ_\bullet)$ due to Schenck and Stillman.  We recall this presentation before proceeding to the proof.

\begin{Lemma}\cite[Lemma~3.8]{schenck1997local}\label{lem:H0pres}
Let $\bigoplus_{\varepsilon\in \Delta^\circ_1} R[e_\varepsilon]$ be the free $R$ module with summands indexed by the formal basis symbols $\{[e_\varepsilon]\mid\varepsilon\in\Delta^\circ_1\}$ which each have degree $r+1$.  Define $K^r\subset \bigoplus_{\varepsilon\in \Delta^\circ_1} R[e_\varepsilon]$ to be the submodule of $\bigoplus_{\varepsilon\in \Delta^\circ_1} R[e_\varepsilon]$ generated by
\[
\{[e_\varepsilon]\mid\varepsilon\in\Delta^\circ_1 \mbox{ is not totally interior}\}
\]
and, for each $\gamma\in\Delta^\circ_0$,
\[
\left\lbrace \sum_{\gamma\in\varepsilon} a_\varepsilon[e_\varepsilon]\Bigm| \sum_{\gamma\in\varepsilon} a_\varepsilon\ell_\varepsilon^{r+1}=0\right\rbrace.
\]
The $R$-module $H_0(\calJ_\bullet)$ is given by generators and relations by
\[
0\rightarrow K^r\rightarrow \bigoplus_{\varepsilon\in \Delta^\circ_1} R[e_\varepsilon] \rightarrow H_0(\calJ_\bullet)\rightarrow 0. 
\]
\end{Lemma}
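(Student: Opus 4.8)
The plan is to build the presentation by hand: first present the bottom term $\bigoplus_{\gamma\in\Delta^\circ_0}J(\gamma)$ of $\calJ_\bullet$ as a quotient of a free module indexed by vertex--edge incidences, then identify $H_0(\calJ_\bullet)=\coker(\partial_1)$ by pulling the image of $\partial_1$ back to that free module and eliminating generators. Concretely, I would let $\mathcal{I}=\{(\gamma,\varepsilon):\gamma\in\Delta^\circ_0,\ \varepsilon\in\Delta^\circ_1,\ \gamma\in\varepsilon\}$, form the free module $P=\bigoplus_{(\gamma,\varepsilon)\in\mathcal{I}}R\,u_{\gamma,\varepsilon}$ with each $u_{\gamma,\varepsilon}$ placed in degree $r+1$, and define $\psi\colon P\to\bigoplus_{\gamma}J(\gamma)$ by $u_{\gamma,\varepsilon}\mapsto\ell_\varepsilon^{r+1}e_\gamma$, where $e_\gamma$ marks the coordinate inclusion of the $\gamma$-th summand. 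Since $J(\gamma)$ is generated by $\{\ell_\varepsilon^{r+1}:\varepsilon\ni\gamma\}$, the map $\psi$ is surjective, and because it is block-diagonal over the vertices its kernel splits as $\ker\psi=\bigoplus_{\gamma\in\Delta^\circ_0}\Syz_\gamma$, where $\Syz_\gamma$ is spanned by the elements $\sum_{\varepsilon\ni\gamma}a_\varepsilon u_{\gamma,\varepsilon}$ with $\sum_{\varepsilon\ni\gamma}a_\varepsilon\ell_\varepsilon^{r+1}=0$.

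Next I would lift $\img(\partial_1)$. As $\bigoplus_{\tau}J(\tau)$ is generated by the elements $\ell_\varepsilon^{r+1}$ sitting in each $\varepsilon$-summand, $\img(\partial_1)$ is generated by $\partial_1(\ell_\varepsilon^{r+1})=\sum_{\gamma\in\varepsilon\cap\Delta^\circ_0}[\varepsilon:\gamma]\,\ell_\varepsilon^{r+1}e_\gamma=\psi(d_\varepsilon)$, where $d_\varepsilon:=\sum_{\gamma\in\varepsilon\cap\Delta^\circ_0}[\varepsilon:\gamma]\,u_{\gamma,\varepsilon}$ and $[\varepsilon:\gamma]=\pm1$ is the incidence sign. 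Writing $D=\langle d_\varepsilon:\varepsilon\in\Delta^\circ_1\rangle$, the identifications $\bigoplus_\gamma J(\gamma)\cong P/\ker\psi$ and $\img(\partial_1)=\psi(D)$ give, after a short diagram chase, $H_0(\calJ_\bullet)\cong P/(\ker\psi+D)$. I would then quotient by $D$ first. For a totally interior edge $\varepsilon$ with endpoints $\gamma_1,\gamma_2$, the relation $d_\varepsilon$ identifies $u_{\gamma_1,\varepsilon}$ with $\pm u_{\gamma_2,\varepsilon}$, whereas for an interior edge with a single interior endpoint $\gamma_1$ the relation $d_\varepsilon=\pm u_{\gamma_1,\varepsilon}$ kills that generator. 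Thus $P/D$ is free with one basis class $[e_\varepsilon]$ per totally interior edge, and the natural map $\bigoplus_{\varepsilon\in\Delta^\circ_1}R[e_\varepsilon]\to P/D$, sending $[e_\varepsilon]$ to that common class and to $0$ when $\varepsilon$ is not totally interior, is surjective with kernel generated by $\{[e_\varepsilon]:\varepsilon\text{ not totally interior}\}$ --- exactly the first family of generators of $K^r$.

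To finish I would transport the relations $\ker\psi$ across this map. Under $P\to P/D$ the class of $u_{\gamma,\varepsilon}$ becomes $[e_\varepsilon]$ (which is $0$ when $\varepsilon$ is not totally interior), so the syzygy generator $\sum_{\varepsilon\ni\gamma}a_\varepsilon u_{\gamma,\varepsilon}$ of $\Syz_\gamma$ maps to $\sum_{\varepsilon\ni\gamma}a_\varepsilon[e_\varepsilon]$, precisely the second family of generators of $K^r$. Hence the preimage in $\bigoplus_{\varepsilon}R[e_\varepsilon]$ of $(\ker\psi+D)/D$ is generated by both families, i.e.\ equals $K^r$, and the composite $\bigoplus_{\varepsilon}R[e_\varepsilon]\to P/D\to H_0(\calJ_\bullet)$ is surjective with kernel $K^r$; the grading is consistent since every $u_{\gamma,\varepsilon}$ and every $[e_\varepsilon]$ lives in degree $r+1$. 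This yields the asserted exact sequence $0\to K^r\to\bigoplus_{\varepsilon\in\Delta^\circ_1}R[e_\varepsilon]\to H_0(\calJ_\bullet)\to 0$. The step I expect to demand the most care is the pair of claims $\ker\psi=\bigoplus_\gamma\Syz_\gamma$ and $H_0(\calJ_\bullet)\cong P/(\ker\psi+D)$: one must verify that no syzygy mixes different vertex summands --- which holds because $\bigoplus_\gamma J(\gamma)$ is a genuine direct sum and $\psi$ is block-diagonal --- and must track the incidence signs $[\varepsilon:\gamma]$ so that each $d_\varepsilon$ genuinely identifies, rather than merely relates, the two endpoint generators.
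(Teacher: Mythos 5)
Your proposal is correct, but note that the paper itself offers no proof of this statement: it is quoted verbatim from Schenck--Stillman \cite[Lemma~3.8]{schenck1997local}, so there is no internal argument to compare against. Your reconstruction is the natural one and it goes through: presenting $\bigoplus_{\gamma}J(\gamma)$ as $P/\ker\psi$ with $P$ free on vertex--edge incidences, the block-diagonality of $\psi$ does give $\ker\psi=\bigoplus_{\gamma}\Syz_\gamma$ (no syzygy can mix vertex summands because the target is a genuine direct sum), and the diagram chase $H_0(\calJ_\bullet)\cong P/(\ker\psi+D)$ is standard since $\img(\partial_1)=\psi(D)$. The sign issue you flag at the end in fact resolves cleanly: with the simplicial convention that $\partial_1$ of an edge is (head)$\,-\,$(tail), the incidence numbers $[\varepsilon:\gamma_1]$ and $[\varepsilon:\gamma_2]$ at the two interior endpoints of a totally interior edge are opposite, so $d_\varepsilon$ literally identifies $u_{\gamma_1,\varepsilon}$ with $u_{\gamma_2,\varepsilon}$ and no sign twist enters the second family of generators of $K^r$; and even if one were sloppy about orientations, the discrepancy is only a diagonal unit rescaling of the coordinates of $\bigoplus_{\varepsilon}R[e_\varepsilon]$, which fixes the first family and carries one syzygy module to the other, leaving the presentation unchanged up to isomorphism (the $\ell_\varepsilon$ are in any case only defined up to scalar). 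One small case your narrative skips but your formulas cover: an interior edge with \emph{both} endpoints on the boundary contributes no $u_{\gamma,\varepsilon}$ at all and has $d_\varepsilon=0$ (the empty sum), consistent with its $[e_\varepsilon]$ being killed by the first family of generators; since all edges through an interior vertex are interior edges, the syzygy families are likewise complete as stated. In short, your argument supplies a full and correct proof of the cited lemma, in the same elimination-of-generators spirit as the original source.
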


\begin{proof}[Proof of Lemma~\ref{lem:presentation_H0J}]
First, since $\Delta$ has no holes, $H_1(\calR_\bullet/\calJ_\bullet)\cong H_0(\calJ_\bullet)$.  This follows from the long exact sequence in homology associated to the short exact sequence of chain complexes $0\rightarrow \calJ_\bullet\rightarrow \calR_\bullet\rightarrow \calR_\bullet/\calJ_\bullet\rightarrow 0$ and the fact that $H_1(\calR_\bullet)=H_0(\calR_\bullet)=0$ (see~\cite{schenck1997local}).

Thus we may use Lemma~\ref{lem:H0pres}.  Since $\Delta$ has only one totally interior edge $\tau$, $K^r$ is generated by the free module $F=\{[e_\varepsilon]\mid \varepsilon\in\Delta^\circ_1,\varepsilon\neq \tau\}$ and the syzygy modules $K_i=\{\sum_{v_i\in\varepsilon} a_\varepsilon [e_\varepsilon]\bigm| \sum_{v_i\in\varepsilon} a_\varepsilon \ell_\varepsilon^{r+1}=0\}$ for $i=1,2$.  Since all factors of $R$ indexed by an interior edge different from $\tau$ are quotiented out, after trimming the presentation in Lemma~\ref{lem:H0pres} we are left with
\[
0\rightarrow \overline{K^r} \rightarrow R[e_{\tau}]\rightarrow H_0(\calJ_\bullet)\rightarrow 0,
\]
where $\overline{K^r}=K^r/F$.  Observe that $\overline{K^r}$ is the internal sum of the submodules
\[
K_i=\{ a_\tau[e_\tau] \mid a_\tau\ell^{r+1}_{\tau}\in J_i\}
\]
for $i=1,2$, where $J_{i}=\sum_{\substack{\varepsilon\in\Delta^\circ_1\\v_{i}\in\varepsilon,\varepsilon\neq\tau}}J(\varepsilon) \mbox{ for } i=1,2.$  Thus $\overline{K^r}=J_1:\ell^{r+1}_\tau+J_2:\ell^{r+1}_\tau=J_1:J(\tau)+J_2:J(\tau)$.  Recalling that $[e_\tau]$ has degree $r+1$, this proves that
\[
H_1(\calR_\bullet/\calJ_\bullet)\cong R/(J_1:J(\tau)+J_2:J(\tau))(-r-1).\qedhere
\]
\end{proof}

After coning, we apply a change of coordinates $T:\RR^3\to\RR^3$ so that $T(\hat{v_1})$ points in the direction of $(0,1,0)$ and $T(\hat{v_2})$ points in the direction of $(1,0,0)$.  With respect to this new choice of coordinates we may choose linear forms vanishing on the interior codimension one faces so that:
\begin{align*}
J(\tau)&=\langle z^{r+1}\rangle,\\
J_{1}&=\langle (x+{b_1}z)^{r+1},(x+{b_2}z)^{r+1},\dots,(x+b_{s}z)^{r+1}\rangle,\mbox{ and }\\
J_{2}&=\langle (y+{c_1}z)^{r+1},(y+{c_2}z)^{r+1},\dots,(y+c_{t}z)^{r+1}\rangle.
\end{align*}
In Section~\ref{sec:init} we study ideals of this type, returning to the study of the homology module in Section~\ref{setion:section_1tot}.

\section{The initial ideal of a power ideal in two variables}\label{sec:init}

This section is largely a technical section in which we derive some results from commutative algebra -- possibly of independent interest -- to use in our analysis for $\dim C^r_d(\Delta)$ in future sections.  The reader will not lose much by skipping this section for now and returning later as needed or desired.

Suppose we are given a set of points $\X=\{p_1,\cdots,p_s\}\subset \mathbb{P}^1$, where $p_i=[b_i:c_i]$ for $1\le i\le s$, and a sequence $\mathbf{a}=(a_1,\ldots,a_s)$ of \textit{multiplicities} for these points.  We will assume that the points are ordered so that $a_1\le a_2\le \cdots \le a_s$.  We associate two ideals to this set of points.  First, the \textit{power ideal}
\[
J(\X,\ba):=\langle (b_1x+c_1y)^{a_1+1},\ldots,(b_sx+c_sy)^{a_s+1}\rangle
\]
in the polynomial ring $R=\RR[x,y]$ (the offset by one in the exponent will make statements later a bit cleaner).  Secondly, the \textit{fat point} ideal
\[
I(\X,\ba):=\bigcap_{i=1}^s \langle b_iY-c_iX\rangle^{a_i} = \langle \prod_{i=1}^s (b_iY-c_iX)^{a_i}\rangle
\]
in the polynomial ring $S=\RR[X,Y]$ ($S$ is the coordinate ring of $\mathbb{P}^1$).  The ideal $I_{\ba}(\X)$ consists of all polynomials which vanish to order $a_i$ at $p_i$, for $i=1,\ldots,s$.

Our objective is to show that, under the assumption that $b_i\neq 0$ for $i=1,\ldots,s$, the initial ideal
$
\text{In}(J(\X,\ba)),
$
with respect to either graded lexicographic or graded reverse lexicographic order, is a lex-segment ideal.  Since the graded lexicographic and graded reverse lexicographic order coincide in two variables, we focus on the lexicographic order since it is consistent with the lex-segment definition.

\begin{Definition}\label{def:lexsegment}
A monomial ideal $I\subseteq R$ is called a lex-segment ideal if, whenever a monomial $m\in R$ of degree $d$ satisfies $m>_{\mbox{lex}} n$ for some monomial $n\in I$ of degree $d$, then $m\in I$.
\end{Definition}

Lex-segment ideals play an important role in Macaulay's classification of Hilbert functions~\cite{Macaulay-1927}.  Before proceeding to the proof, we introduce the notion of \textit{apolarity}.  An excellent survey of this notion by Geramita can be found in~\cite{geramita1995fat}.  Define an action of $S$ on $R$ by
\[
(X^aY^b)\circ f=\frac{\partial f}{\partial x^a\partial y^b},
\]
and extend linearly.  That is, $S$ acts on $R$ as \textit{partial differential operators}.  It is straightforward to see that this action induces a perfect pairing
\[
R_d\times S_d\to \RR
\]
via $(f,F)\to F\circ f$.  For an $\RR$-vector subspace $U\subset R_d$ we thus define
\[
U^\perp:=\{F\in S: F\circ f=0 \mbox{ for all }f\in U\}.
\]
Write $J_d(\X,\ba)$ for the $\RR$-vector space spanned by homogeneous polynomials in $J(\X,\ba)$ of degree $d$ (this definition clearly extends to any homogeneous ideal).  A result of Emsalem and Iarrobino describes $J_d(\X,\ba)^\perp$ in terms of fat point ideals.  In the statement of the result below, we put $[m]_+=\max\{m,0\}$ and $[d-\ba]_+=([d-a_1]_+,[d-a_2]_+,\ldots,[d-a_s]_+)$.

\begin{Theorem}[Emsalem and Iarrobino~\cite{emsalem1995inverse}]
\label{thm:InverseSystemSymbolicPower}
$J_d(\X,\ba)^{\perp}=I_d(\X,[d-\ba]_+)$
\end{Theorem}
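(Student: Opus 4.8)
The plan is to reformulate the perpendicular space $J_d(\X,\ba)^\perp$ (computed inside $S_d$ via the degree-$d$ perfect pairing $R_d\times S_d\to\RR$) as the common kernel of a family of constant-coefficient differential operators on $S$, and then solve those equations explicitly. Write $\ell_i=b_ix+c_iy\in R_1$, so that $J_d(\X,\ba)=\sum_i \ell_i^{a_i+1}R_{d-a_i-1}$. The central tool is an adjointness property of the apolarity pairing: for $g\in R_e$, $h\in R_{d-e}$, and $F\in S_d$ one has $\langle gh,F\rangle=\langle h,\,g(\partial_X,\partial_Y)F\rangle$, where $g(\partial_X,\partial_Y)$ is the operator obtained by substituting $\partial_X,\partial_Y$ for $x,y$ in $g$; that is, multiplication by $g$ on $R$ is adjoint to differentiation by $g$ on $S$. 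I would verify this on monomials, where the pairing is diagonal ($\langle x^iy^{d-i},X^jY^{d-j}\rangle=\delta_{ij}\,i!\,(d-i)!$), so the check is immediate.

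Granting this, membership in $J_d(\X,\ba)^\perp$ unwinds cleanly. A form $F\in S_d$ lies in $J_d(\X,\ba)^\perp$ if and only if $\langle \ell_i^{a_i+1}g,F\rangle=0$ for every $i$ and every $g\in R_{d-a_i-1}$. By the adjointness lemma this pairing equals $\langle g,\,D_i^{a_i+1}F\rangle$ where $D_i:=b_i\partial_X+c_i\partial_Y$, and since the pairing $R_{d-a_i-1}\times S_{d-a_i-1}\to\RR$ is perfect, vanishing for all $g$ forces $D_i^{a_i+1}F=0$. Hence $J_d(\X,\ba)^\perp=\{F\in S_d:\ D_i^{a_i+1}F=0\ \text{for}\ i=1,\dots,s\}$.

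It then remains to identify $\ker D_i^{a_i+1}$ inside $S_d$ with a divisibility condition. I would diagonalize $D_i$ by the linear change of coordinates $U=b_iX+c_iY$ and $V=b_iY-c_iX$ (the latter being the linear form vanishing at $p_i$), which is invertible since its determinant is $b_i^2+c_i^2\neq 0$. As $D_i$ is a derivation with $D_iU=b_i^2+c_i^2\neq0$ and $D_iV=0$, it acts on $S$ as a nonzero scalar multiple of $\partial_U$. Writing $F=\sum_k\lambda_kU^kV^{d-k}$, the equation $D_i^{a_i+1}F=0$ is equivalent to $\lambda_k=0$ for all $k\geq a_i+1$, i.e. to $V^{[d-a_i]_+}\mid F$ — the condition that $F$ vanish to order $[d-a_i]_+$ at $p_i$ (vacuous precisely when $a_i\geq d$). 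Intersecting over $i$ and using that the forms $b_iY-c_iX$ attached to distinct points are pairwise non-associate, so that the intersection of their powers is generated by their product, I obtain $J_d(\X,\ba)^\perp=\big(\bigcap_i\langle b_iY-c_iX\rangle^{[d-a_i]_+}\big)_d=I_d(\X,[d-\ba]_+)$, which is the claim.

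I expect the main obstacle to be bookkeeping rather than conceptual. The adjointness lemma and the diagonalization of $D_i$ are each routine once set up, but one must track graded pieces carefully so that the truncation $[\,\cdot\,]_+$ emerges correctly from the boundary cases $a_i+1>d$, and so that perfectness of the pairing is invoked in the correct degree $d-a_i-1$. A secondary point needing care is the passage from an intersection of (symbolic) powers to the principal ideal generated by their product, which genuinely relies on the $p_i$ being distinct points of $\PP^1$.
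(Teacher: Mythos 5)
Your proposal is correct, but it cannot be compared to an internal argument: the paper does not prove this statement at all — it imports it as a black box, citing Emsalem and Iarrobino, whose theorem concerns inverse systems of symbolic powers of fat point ideals in arbitrary $\PP^n$. What you have written is a legitimate, self-contained proof of the special case actually used (points in $\PP^1$), and every step checks out: the adjunction $\langle gh,F\rangle=\langle h,g(\partial_X,\partial_Y)F\rangle$ is immediate on monomials since the pairing is diagonal with nonzero entries $i!\,(d-i)!$; perfectness in degree $d-a_i-1$ converts orthogonality to the generator $\ell_i^{a_i+1}$ into the single equation $D_i^{a_i+1}F=0$ (with the boundary case $a_i+1>d$ vacuous on both sides, which is exactly where $[\,\cdot\,]_+$ comes from); and the diagonalization $D_i=(b_i^2+c_i^2)\partial_U$ in the coordinates $U=b_iX+c_iY$, $V=b_iY-c_iX$ identifies the kernel with multiples of $V^{[d-a_i]_+}$. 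Two remarks on what distinguishes your route. First, it is genuinely special to binary forms: the linear change of variables that diagonalizes each directional derivative, and the fact that an intersection of powers of pairwise non-associate linear forms is the principal ideal generated by their product, are both $\PP^1$ phenomena; the general Emsalem--Iarrobino theorem needs real machinery. Second, your invertibility check $b_i^2+c_i^2\neq 0$ quietly uses that the ground field is $\RR$ (over $\CC$ this determinant can vanish); since the paper works over $\RR$ throughout this is harmless, but a field-independent version would take $U$ to be any linear form with $D_iU\neq 0$ rather than this specific one. What your approach buys is that the paper's Corollaries~\ref{cor:gerschenck} and~\ref{cor:LexSeg1D} then rest on completely elementary linear algebra rather than on an external reference.
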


As a corollary, the Hilbert function $\mbox{HF}(d,J(\X,\ba))=\dim J_d(\X,\ba)$ can be derived.

\begin{Corollary}[Geramita and Schenck~\cite{geramita1998fat}]
\label{cor:gerschenck}
$
\dim J_d(\X,\ba)=\min\left\lbrace d+1,\sum_{i=1}^s[d-a_i]_+\right\rbrace
$
\end{Corollary}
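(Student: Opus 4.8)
The plan is to dualize via the apolarity pairing and then reduce everything to the Hilbert function of a principal ideal. Since the pairing $R_d\times S_d\to\RR$ is perfect and $\dim R_d=\dim S_d=d+1$, every subspace $U\subseteq R_d$ satisfies $\dim U+\dim U^\perp=d+1$. Taking $U=J_d(\X,\ba)$ and invoking Theorem~\ref{thm:InverseSystemSymbolicPower} gives
\[
\dim J_d(\X,\ba)=(d+1)-\dim I_d(\X,[d-\ba]_+),
\]
so the whole problem is reduced to computing the dimension of a single graded piece of a fat point ideal.

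Next I would exploit that a fat point ideal on $\PP^1$ is principal. Indeed, the displayed equality in the definition of $I(\X,\ba)$ already exhibits $I(\X,\mathbf{m})$ as generated by the single form $g=\prod_{i=1}^s (b_iY-c_iX)^{m_i}$ of degree $\sum_i m_i$; this holds because the points $p_i$ are distinct, so the linear forms $b_iY-c_iX$ are pairwise non-associate in the UFD $S$, and the intersection of the coprime principal ideals $\langle b_iY-c_iX\rangle^{m_i}$ equals their product. Hence, writing $e:=\sum_{i=1}^s[d-a_i]_+$ for the degree of the generator of $I(\X,[d-\ba]_+)$, the degree-$d$ piece is $I_d(\X,[d-\ba]_+)=g\cdot S_{d-e}$, which has dimension $[d-e+1]_+$.

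Substituting back, I obtain $\dim J_d(\X,\ba)=(d+1)-\bigl[(d+1)-\sum_{i=1}^s[d-a_i]_+\bigr]_+$. The final step is the elementary identity $M-[M-N]_+=\min\{M,N\}$: when $N\le M$ the bracket equals $M-N$ and the expression collapses to $N$, while when $N>M$ the bracket vanishes and the expression equals $M$. Applying this with $M=d+1$ and $N=\sum_i[d-a_i]_+$ yields exactly $\min\{d+1,\sum_{i=1}^s[d-a_i]_+\}$, which is the claimed formula.

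I expect no serious obstacle here, as the argument is a short chain of reductions resting entirely on results already available. The only points demanding care are (i) confirming the principality of $I(\X,[d-\ba]_+)$ and reading off its generator degree correctly as $\sum_i[d-a_i]_+$, so that multiplicities which would be negative are truncated to $0$ by $[\cdot]_+$ \emph{before} summing, and (ii) the $\min$/$\max$ bookkeeping in the last step. Neither requires more than the facts assembled above.
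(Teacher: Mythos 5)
Your proposal is correct and follows essentially the same route as the paper: the paper also derives this corollary from Theorem~\ref{thm:InverseSystemSymbolicPower} together with the observation (made explicitly inside the proof of Corollary~\ref{cor:LexSeg1D}) that $I(\X,[d-\ba]_+)$ is principal with generator $F=\prod_{i=1}^s(b_iY-c_iX)^{[d-a_i]_+}$ of degree $\alpha=\sum_{i=1}^s[d-a_i]_+$, so that $\dim I_d(\X,[d-\ba]_+)=[d-\alpha+1]_+$ and the perfect pairing gives $\dim J_d(\X,\ba)=(d+1)-[d-\alpha+1]_+=\min\{d+1,\alpha\}$. Your handling of the truncation $[\cdot]_+$ and the $\min$/$\max$ identity matches the paper's (more tersely stated) argument.
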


This shows that the Hilbert function of $J(\X,\ba)$ has the maximal growth possible for its number of generators.  We take this analysis one step further.

\begin{Corollary}
\label{cor:LexSeg1D}
Suppose that no point of $\X$ has a vanishing $x$-coordinate.  Then the initial ideal $
\text{In}(J(\X,\ba))
$
is a lex-segment ideal.
\end{Corollary}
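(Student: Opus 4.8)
The plan is to fix the lexicographic order with $x>y$ (so that each generator $(b_ix+c_iy)^{a_i+1}$ has leading monomial a pure power of $x$) and to verify the lex-segment condition one degree at a time, passing to the apolar side where the relevant ideal becomes principal. Throughout write $h_d=\dim J_d(\X,\ba)$; by Corollary~\ref{cor:gerschenck} we have $h_d=\min\{d+1,\sum_{i=1}^s[d-a_i]_+\}$, and since taking initial ideals preserves Hilbert functions, $\dim \Initial(J(\X,\ba))_d=h_d$ as well.

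First I would record a linear-algebra reformulation. Order the degree-$d$ monomials decreasingly as $x^d>x^{d-1}y>\cdots>y^d$; the monomials with $y$-exponent at least $h_d$ are exactly the $(d+1-h_d)$ smallest ones, and they span $y^{h_d}R_{d-h_d}$. A subspace $V\subseteq R_d$ of dimension $h_d$ has its set of leading monomials equal to the top $h_d$ monomials if and only if $V\cap y^{h_d}R_{d-h_d}=0$: in that case $V$ is complementary to $y^{h_d}R_{d-h_d}$, so each of the top $h_d$ monomials is the leading term of an element of $V$ whose remaining terms lie strictly lower, and as there are exactly $h_d$ leading monomials these fill out the whole top segment. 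Applying this with $V=J_d(\X,\ba)$, the assertion that $\Initial(J(\X,\ba))$ is a lex-segment ideal (Definition~\ref{def:lexsegment}) becomes equivalent to
\[
J_d(\X,\ba)\cap y^{h_d}R_{d-h_d}=0\quad\text{for all }d\ge 0.
\]

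Next I would dualize using the apolarity pairing $R_d\times S_d\to\RR$, under which distinct monomials are orthogonal. Since $\dim J_d(\X,\ba)=h_d$ and $\dim y^{h_d}R_{d-h_d}=d+1-h_d$ are complementary, the intersection above vanishes if and only if the perpendicular subspaces meet trivially. A direct computation gives $(y^{h_d}R_{d-h_d})^{\perp}=X^{d-h_d+1}S_{h_d-1}$, the space of degree-$d$ forms divisible by $X^{d-h_d+1}$, while Theorem~\ref{thm:InverseSystemSymbolicPower} identifies $J_d(\X,\ba)^{\perp}=I_d(\X,[d-\ba]_+)$. Thus it remains to prove
\[
I_d(\X,[d-\ba]_+)\cap X^{d-h_d+1}S_{h_d-1}=0.
\]
Here I would exploit that $I(\X,[d-\ba]_+)$ is principal, generated by $G=\prod_{i=1}^s(b_iY-c_iX)^{[d-a_i]_+}$ of degree $N=\sum_i[d-a_i]_+$. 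When $N\ge d+1$ we have $h_d=d+1$, $J_d(\X,\ba)=R_d$, and the claim is immediate; so assume $N\le d$, whence $h_d=N$ and $I_d(\X,[d-\ba]_+)=G\cdot S_{d-N}$. The hypothesis that no point of $\X$ has vanishing $x$-coordinate means $b_i\ne 0$ for all $i$, so no factor $b_iY-c_iX$ is divisible by $X$ and hence $X\nmid G$. Consequently a form $G\cdot P$ with $\deg P=d-N$ is divisible by $X^{d-N+1}=X^{d-h_d+1}$ only if $P$ is, which forces $P=0$ for degree reasons, giving the desired vanishing.

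I expect the main obstacle to be the reformulation and dualization bookkeeping, namely correctly matching the top/bottom monomials in the lex order with divisibility by powers of $y$ on the primal side and by powers of $X$ on the apolar side, and checking the perp identity $(y^{h_d}R_{d-h_d})^{\perp}=X^{d-h_d+1}S_{h_d-1}$. Once these identifications and $J_d^{\perp}=I_d(\X,[d-\ba]_+)$ are in place, the crucial structural input, and the only point where the hypothesis $b_i\ne 0$ enters, is the observation that $X\nmid G$, after which the conclusion is a one-line degree count. It is precisely the principality of the fat point ideal that makes the divisibility argument clean, which is why dualizing away from the non-principal ideal $J(\X,\ba)$ is worthwhile.
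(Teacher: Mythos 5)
Your proposal is correct, and at its core it follows the same route as the paper: both proofs work degree by degree, pass to the apolar side via Theorem~\ref{thm:InverseSystemSymbolicPower} together with the Hilbert function of Corollary~\ref{cor:gerschenck}, exploit that $I(\X,[d-\ba]_+)$ is principal with generator $F=\prod_{i=1}^s(b_iY-c_iX)^{[d-a_i]_+}$, and invoke the hypothesis $b_i\neq 0$ only through the single fact that $X\nmid F$ (equivalently, that $Y^{\deg F}$ appears in $F$ with nonzero coefficient). Where you genuinely diverge is in how that fact is cashed in. The paper argues by contradiction on the primal side: if some $f\in J_d(\X,\ba)$ had lex-leading monomial with $y$-exponent at least $\alpha=\deg F$, then the explicit test element $X^{d-\alpha-b}Y^{b}F\in I_d(\X,[d-\ba]_+)$ pairs nontrivially with $f$, since the lex-last monomial of the test element is the only one that can meet a monomial of $f$; this contradicts $J_d(\X,\ba)^{\perp}=I_d(\X,[d-\ba]_+)$. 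You instead translate the lex-segment property into the transversality statement $J_d(\X,\ba)\cap y^{h_d}R_{d-h_d}=0$ (with $h_d=\dim J_d(\X,\ba)$), dualize it wholesale using that two subspaces of complementary dimension meet trivially if and only if their perps do, and finish with unique factorization: $X\nmid F$ plus a degree count kills $I_d(\X,[d-\ba]_+)\cap X^{d-h_d+1}S_{h_d-1}$. Your route buys a cleaner ending -- no explicit pairing evaluation with factorials -- and isolates exactly where the hypothesis on the $x$-coordinates enters; the paper's route is more hands-on and needs none of the auxiliary linear-algebra lemmas you rely on (that the number of leading monomials of a subspace equals its dimension, and the perp identities for complementary subspaces), extracting everything from one direct pairing computation. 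Both arguments are complete and correct.
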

\begin{proof}
Fix a degree $d$.  Put $F=\prod_{i=1}^s (b_iY-c_iX)^{[d-a_i]_+}$ and $\alpha=\deg(F)=\sum_{i=1}^s[d-a_i]_+$.  By assumption, $b_i\neq 0$ for any $i=1,\ldots,s$, so the monomial $Y^{\alpha}$ appears with non-zero coefficient in $F$.

Since $I(\X,[d-\ba]_+)$ is principle, a basis for $I_d(\X,[d-\ba]_+)$ is given by
\[
\{X^{d-\alpha-b}Y^bF: 0\le b\le d-\alpha\}
\]
(Coupled with Theorem~\ref{thm:InverseSystemSymbolicPower}, this proves that $\dim J_d(\X,\ba)=\alpha$, which is Corollary~\ref{cor:gerschenck}.)  Observe that the given basis for $I_d(\X,\ba)$ has a polynomial whose lex-last term involves the monomial $X^{d-\alpha-b}Y^{b+\alpha}$ for $0\le b\le d-\alpha$.

If $d<\min\{a_i\mid 1\le i\le s\}$ then $J_d(\X,\ba)=0$.  So suppose $d\ge\min\{a_i\mid 1\le i\le s\}$ and that the leading term of some polynomial $f\in J_d(\X,\ba)$ with respect to lex order is $Cx^{d-\alpha-b} y^{b+\alpha}$ for some $b\ge 0$ and $C\neq 0$.  Then every other term of $f$ involves a power of $y$ which is larger than $b+\alpha$.  From our above observation, the lex-last (or lex-least) monomial in the basis polynomial $X^{d-\alpha-b}Y^bF$ is $X^{d-\alpha+b} Y^{b+\alpha}$.  Thus $X^{d-\alpha-b}Y^bF\circ f\neq 0$.  In fact, $X^{d-\alpha-b}Y^bF\circ f=\prod_{i=1}^sb_iX^{d-\alpha-b}Y^{b+\alpha}\circ (Cx^{d-\alpha+b} y^{b+\alpha})$, so we can compute it exactly as:
\[
X^{d-\alpha-b}Y^bF\circ f=C(\prod_{i=1}^s b_i)(d-\alpha+b)!(b+\alpha)!,
\]
which is non-zero because the $b_i$'s are all non-vanishing and $C\neq 0$.  This contradicts Theorem~\ref{thm:InverseSystemSymbolicPower}, since $X^{d-\alpha-b}Y^bF\in I_d(\X,[d-\ba]_+)$ but $X^{d-\alpha-b}Y^bF\circ f\neq 0$.

It follows that the initial terms of $J_d(\X,\ba)$ can only involve the monomials $x^Ay^B$, where $0\le B<\alpha$.  Since $\dim J_d(\X,\ba)=\alpha$ by Corollary~\ref{cor:gerschenck}, it follows that $\mbox{In}(J(\X,\ba))_d$ consists of the $\alpha$ lex-largest monomials of degree $d$.  Thus $\mbox{In}(J(\X,\ba))$ is a lex-segment ideal.
\end{proof}

In the following corollary we use the ordering $a_1\le a_2\le\cdots\le a_s$.

\begin{Corollary}
\label{cor:GeneralMonomialDescription}
With the same setup as Corollary~\ref{cor:LexSeg1D},
The initial ideal $\mbox{In}(J(\X,\ba))$ consists of the monomials $x^Ay^B$, where $A\ge 0$, $B\ge 0$, and one of the strict inequalities $\sum_{i=1}^j a_i<jA+(j-1)B$, $1\le j\le s$, is satisfied.
\end{Corollary}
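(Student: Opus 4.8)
The plan is to read the corollary directly off the degree-by-degree description of $\mathrm{In}(J(\X,\ba))$ established in (the proof of) Corollary~\ref{cor:LexSeg1D}, followed by an elementary rearrangement of inequalities. Recall from that proof that in each degree $d$ the space $\mathrm{In}(J(\X,\ba))_d$ consists of the $\alpha_d:=\dim J_d(\X,\ba)$ lex-largest monomials of degree $d$, and that $\alpha_d=\sum_{i=1}^s[d-a_i]_+$ by Corollary~\ref{cor:gerschenck}. Since the lex order has $x>_{\mathrm{lex}}y$, the $\alpha_d$ lex-largest monomials of degree $d$ are precisely those $x^Ay^B$ with $A+B=d$ and $0\le B<\alpha_d$. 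Thus the whole content to be translated is the single inequality $B<\sum_{i=1}^s[d-a_i]_+$ with $d=A+B$, and because every monomial $x^Ay^B$ carries a well-defined degree $d=A+B$, handling each $d$ separately gives a complete description of the monomial ideal.

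The key step is to rewrite $\sum_{i=1}^s[d-a_i]_+$ using the hypothesis $a_1\le a_2\le\cdots\le a_s$. Since $d-a_i$ is nonincreasing in $i$, the partial sums $\sum_{i=1}^j(d-a_i)=jd-\sum_{i=1}^j a_i$ increase precisely as long as the summands remain positive, whence
\[
\sum_{i=1}^s[d-a_i]_+=\max_{0\le j\le s}\Bigl(jd-\sum_{i=1}^j a_i\Bigr),
\]
with $j=0$ contributing the empty sum $0$. Consequently $B<\sum_{i=1}^s[d-a_i]_+$ holds if and only if there is some $j$ with $0\le j\le s$ and $B<jd-\sum_{i=1}^j a_i$.

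Finally I would substitute $d=A+B$ and rearrange: $B<j(A+B)-\sum_{i=1}^j a_i$ is equivalent to $\sum_{i=1}^j a_i<jA+(j-1)B$. The index $j=0$ yields the impossible inequality $0<0$ and may be discarded, leaving exactly the disjunction of the strict inequalities $\sum_{i=1}^j a_i<jA+(j-1)B$ over $1\le j\le s$. The only genuine subtlety — the ``main obstacle,'' such as it is — is the displayed identity expressing $\sum_{i=1}^s[d-a_i]_+$ as the maximum of the partial sums $jd-\sum_{i=1}^j a_i$: this is what converts the single threshold condition $B<\alpha_d$ (one inequality per degree) into the disjunction over $j$ demanded by the statement, and it is precisely here that the ordering $a_1\le\cdots\le a_s$ is used. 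Everything else is bookkeeping with the lex order and a linear rearrangement.
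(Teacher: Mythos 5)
Your proposal is correct and takes essentially the same route as the paper's own proof: both turn the lex-segment property and the Hilbert function of Corollary~\ref{cor:gerschenck} into a threshold condition on $B$, rewrite $\sum_{i=1}^s[A+B-a_i]_+$ as a maximum of partial sums using the ordering $a_1\le\cdots\le a_s$, and rearrange (the paper merely phrases it contrapositively, characterizing non-membership). The one slip is that Corollary~\ref{cor:gerschenck} gives $\dim J_d(\X,\ba)=\min\left\lbrace d+1,\sum_{i=1}^s[d-a_i]_+\right\rbrace$ rather than $\sum_{i=1}^s[d-a_i]_+$ itself, but this is harmless since $B\le d$ for a degree-$d$ monomial, so the $d+1$ term never binds---exactly the step the paper dispatches by noting that $B\ge A+B+1$ is impossible.
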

\begin{proof}
It suffices to show that $x^Ay^B\not\in \mbox{In}(J(\X,\ba))$ if and only if $A\ge 0, B\ge 0$ and $\sum_{i=1}^j a_i\ge jA+(j-1)B$ is satisfied for every $j=1,\ldots,s$.

Since $\mbox{In}(J(\X,\ba))$ is a lex-segment ideal with Hilbert function $\dim \mbox{In}(J(\X,\ba))_d=\min\{d+1,\sum_{i=1}^s[d-a_i]_+$, $x^Ay^B\not\in\mbox{In}(J(\X,\ba))$ if and only if
\[
B\ge \dim \mbox{In}(J(\X,\ba))_{A+B}=\min\left\lbrace A+B+1,\sum_{i=1}^s[A+B-a_i]_+\right\rbrace.
\]
Since $A,B\ge 0$ it is not possible that $B\ge A+B+1$.  So we are left with the condition
\[
B\ge\sum_{i=1}^s[A+B-a_i]_+.
\]
Now, since $a_1\le a_2\le\cdots\le a_s$, $A+B-a_1\ge A+B-a_2\ge \cdots A+B-a_s$.  The `plus' subscript means only positive contributions to the sum on the right hand side are taken.  So we can interpret the above inequality as
\[
B\ge\max\left\lbrace\sum_{i=1}^j (A+B-a_i): j=1,\ldots,s\right\rbrace.
\]
Equivalently, $B\ge \sum_{i=1}^j (A+B-a_i)$ is satisfied for $j=1,\ldots,s$.  Re-arranging, we get $x^Ay^B\not\in \mbox{In}(J(\X,\ba))$ if and only if $jA+(j-1)B\le \sum_{i=1}^j a_i$ for $i=1,\ldots,s$.
\end{proof}

\begin{Remark}
Given non-negative integers $a_1\le a_2\le\cdots\le a_s$, the inequalities $A\ge 0$, $B\ge 0$, and $jA+(j-1)B\le\sum_{i=1}^j a_i$ for $1\le j\le s$ define a convex polygon in $\RR^2$.  Corollary~\ref{cor:GeneralMonomialDescription} says that the initial ideal of $J(\X,\ba)$ consists of monomials which are in bijection with the lattice points in the first quadrant of $\RR^2$ and are additionally \textbf{not} contained in this polygon.  Equivalently, the monomials which are \textbf{not} in the initial ideal of $J(\X,\ba)$ are in bijection with the lattice points of this polygon.
\end{Remark}

In the next result, and following, if $r$ is a non-negative integer we write $J(\X,r)$ and $I(\X,r)$ for the case where $\ba=(r,r,\ldots,r)$ consists of $s$ copies of $r$.

\begin{Corollary}
\label{cor:equalValuedMonomialDescription}
With the same setup as Corollary~\ref{cor:LexSeg1D}, the initial ideal $\mbox{In}(J(\X,r))$ consists of those monomials $x^Ay^B$ satisfying $A\ge 0, B\ge 0$, and $sr<sA+(s-1)B$.
\end{Corollary}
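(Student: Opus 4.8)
The plan is to read this off directly from Corollary~\ref{cor:GeneralMonomialDescription} by specializing to equal multiplicities $\ba=(r,r,\ldots,r)$. In that case $\sum_{i=1}^j a_i = jr$, so Corollary~\ref{cor:GeneralMonomialDescription} says that a monomial $x^Ay^B$ with $A,B\ge 0$ lies in $\mbox{In}(J(\X,r))$ if and only if at least one of the $s$ strict inequalities $jr<jA+(j-1)B$, for $1\le j\le s$, holds. The entire content of the statement to be proved is therefore that this disjunction of $s$ conditions collapses to the single condition coming from $j=s$, namely $sr<sA+(s-1)B$. So the work is to show that the $j=s$ inequality is the weakest of the family.

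To make this precise I would introduce
\[
g(j):=jA+(j-1)B-jr = j(A+B-r)-B,
\]
so that the $j$th inequality reads exactly $g(j)>0$, and note that $g$ is an affine function of the variable $j$ with slope $A+B-r$. Since $g(s)=sA+(s-1)B-sr$, the target inequality $sr<sA+(s-1)B$ is precisely $g(s)>0$. The goal thus reduces to proving that $g(j)>0$ for some $j\in\{1,\ldots,s\}$ if and only if $g(s)>0$. One direction is immediate, since $g(s)>0$ already exhibits an index ($j=s$) satisfying the disjunction; the real point is the converse.

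For the converse I would split on the sign of the slope $A+B-r$. If $A+B-r\ge 0$, then $g$ is non-decreasing in $j$, so $g(s)=\max_{1\le j\le s}g(j)$, and any positive value $g(j)>0$ forces $g(s)>0$. If instead $A+B-r<0$, then $B\ge 0$ gives $A\le A+B<r$, hence $g(1)=A-r<0$; as $g$ is now strictly decreasing, every $g(j)\le g(1)<0$, so no inequality in the family can hold and simultaneously $g(s)<0$. In both cases the disjunction is equivalent to $g(s)>0$, which is the asserted inequality, and together with $A,B\ge 0$ this yields the claim.

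I do not expect a genuine obstacle: this is a one-line monotonicity argument layered on top of Corollary~\ref{cor:GeneralMonomialDescription}. The only place needing a moment's care is the negative-slope case, where the correct observation is \emph{not} that $g(s)$ is the largest value (it is the smallest) but rather that $A<r$ already forces every $g(j)$ to be negative, so the disjunction fails in exactly the same range where $g(s)\le 0$.
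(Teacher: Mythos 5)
Your proposal is correct and follows essentially the same route as the paper: both specialize Corollary~\ref{cor:GeneralMonomialDescription} to $\ba=(r,\ldots,r)$ and then show the disjunction of the $s$ inequalities collapses to the single $j=s$ inequality. The only difference is in the final verification, where the paper dispatches the implication in one line by multiplying $jr<jA+(j-1)B$ through by $s/j$ (using $B\ge 0$ and $j\le s$ to bound $\tfrac{s(j-1)}{j}B\le(s-1)B$), whereas you run a monotonicity case analysis on the affine function $g(j)$ -- both are valid.
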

\begin{proof}
Due to Corollary~\ref{cor:GeneralMonomialDescription}, it suffices to show that the inequality $sr<sA+(s-1)B$ is implied by the inequality $jr<jA+(j-1)B$ for any $j\le s$.  This is clear by multiplying both sides of $jr<jA+(j-1)B$ by $s/j$.
\end{proof}

\subsection{Behavior under colon}

In this section we discuss the behavior of $J(\X,\ba)$ under coloning with a power of $y$.  We continue to assume that no point of $\X$ has a vanishing $x$-coordinate.  We use the following fact about graded reverse lexicographic order.

\begin{Proposition}
\label{prop:commutingcolon}
If $I\subset R=\RR[x_1,\ldots,x_n]$ under graded reverse lexicographic order, then $\mbox{In}(I:x_n)=\mbox{In}(I):x_n$.  In particular, for any integer $e\ge 0$, $\mbox{In}(J(\X,\ba):y^e)=\mbox{In}(J(\X,\ba)):y^e$.
\end{Proposition}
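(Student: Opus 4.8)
The plan is to prove the two containments $\mbox{In}(I):x_n\subseteq\mbox{In}(I:x_n)$ and $\mbox{In}(I:x_n)\subseteq\mbox{In}(I):x_n$ separately. The second holds for an arbitrary term order, while the first relies on the defining feature of graded reverse lexicographic order. Throughout I take $I$ to be homogeneous, which is the only setting in which we apply the result (since $J(\X,\ba)$ is homogeneous); homogeneity lets me lift monomials in $\mbox{In}(I)$ to homogeneous elements of $I$ and makes the degree comparisons below meaningful. For the easy containment, pick a monomial $m\in\mbox{In}(I:x_n)$, say $m=\mbox{In}(f)$ with $x_nf\in I$. Then $x_nm=x_n\mbox{In}(f)=\mbox{In}(x_nf)\in\mbox{In}(I)$, so $m\in\mbox{In}(I):x_n$. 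This uses only multiplicativity of leading terms.

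The crux is a divisibility lemma special to grevlex: for a nonzero homogeneous $f$, if $x_n\mid\mbox{In}(f)$ then $x_n\mid f$. I would prove this by comparing two monomials $m_1,m_2$ of the same degree with $x_n\mid m_1$ and $x_n\nmid m_2$. The exponent difference $\exp(m_1)-\exp(m_2)$ has positive last coordinate, since the $x_n$-exponent of $m_1$ is positive while that of $m_2$ is zero; its last nonzero coordinate is therefore positive, which by the definition of grevlex means $m_1<_{\mathrm{grevlex}}m_2$. Hence among monomials of a fixed degree, every monomial divisible by $x_n$ is strictly smaller than every monomial not divisible by $x_n$. Consequently, if $\mbox{In}(f)$ is divisible by $x_n$, then every term of $f$ (all of the same degree, and all $\le\mbox{In}(f)$) must also be divisible by $x_n$, for otherwise such a term would exceed $\mbox{In}(f)$. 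Thus $x_n\mid f$. I expect this lemma to be essentially the only obstacle; it is precisely the point where homogeneity is needed (for non-homogeneous $f$ the lower-degree terms need not be divisible by $x_n$), and everything else is formal.

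With the lemma in hand the reverse containment follows. Let $m$ be a monomial with $x_nm\in\mbox{In}(I)$, and choose a homogeneous $f\in I$ with $\mbox{In}(f)=x_nm$, which exists because $I$ is homogeneous. By the lemma $x_n\mid f$, so $f=x_nf'$ with $f'\in R$. Then $x_nf'=f\in I$ gives $f'\in I:x_n$, while $x_n\mbox{In}(f')=\mbox{In}(f)=x_nm$ forces $\mbox{In}(f')=m$. Hence $m\in\mbox{In}(I:x_n)$, establishing $\mbox{In}(I):x_n\subseteq\mbox{In}(I:x_n)$ and completing the equality $\mbox{In}(I:x_n)=\mbox{In}(I):x_n$.

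Finally, to obtain the statement for $J(\X,\ba):y^e$, I observe that $y$ is the last variable in grevlex on $\RR[x,y]$, and that $J(\X,\ba)$ together with every iterated colon $J(\X,\ba):y^{k}$ remains homogeneous, so the single-variable result applies at each stage. I then iterate: for $k\ge 1$,
\[
\mbox{In}(J(\X,\ba):y^{k})=\mbox{In}\bigl((J(\X,\ba):y^{k-1}):y\bigr)=\mbox{In}(J(\X,\ba):y^{k-1}):y,
\]
so induction on $k$ gives $\mbox{In}(J(\X,\ba):y^{k})=\mbox{In}(J(\X,\ba)):y^{k}$. Taking $k=e$ yields the claim.
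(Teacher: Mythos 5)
Your proof is correct, and it is genuinely more self-contained than what the paper does: the paper disposes of this proposition with a one-line citation to Eisenbud's Proposition 15.12, of which it is a special case, whereas you reprove that result from scratch. Your argument is in fact the standard mechanism behind the cited proposition: the easy containment $\mathrm{In}(I:x_n)\subseteq\mathrm{In}(I):x_n$ holds for any term order, and the hard direction rests on the grevlex-specific lemma that among monomials of a fixed degree, those divisible by $x_n$ are strictly smaller than those not divisible by $x_n$, so a homogeneous $f$ whose initial term is divisible by $x_n$ is itself divisible by $x_n$. One substantive point in your favor: you correctly flag that homogeneity of $I$ is an essential hypothesis, which the paper's statement elides (Eisenbud's proposition does assume it). Indeed the statement is false without it: for $I=(xy+x)\subset\RR[x,y]$ with $x>y$ one checks $I:y=I$, so $\mathrm{In}(I:y)=(xy)$, while $\mathrm{In}(I):y=(x)$. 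Since $J(\X,\ba)$ and each iterated colon $J(\X,\ba):y^k$ are homogeneous, the application in the paper is unaffected, exactly as you observe; your induction via $(A:b):c=A:(bc)$ to pass from $y$ to $y^e$ is also sound. What the citation buys the paper is brevity; what your argument buys is an explicit record of where grevlex and homogeneity are actually used.
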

\begin{proof}
This is a special case of~\cite[Proposition~15.12]{eisenbud2013commutative}.
\end{proof}

\begin{Corollary}
\label{cor:colontwovar}
For any integer $e\ge 0$, $\mbox{In}(J(\X,\ba):y^e)$ is a lex-segment ideal with Hilbert function
\[
\dim \mbox{In}(J(\X,\ba):y^e)_d=[\dim \mbox{In}(J(\X,\ba))_{d+e}-e]_+=[\min\{d+1, \sum_{i=1}^s [d+e-a_i]_+\}-e]_+
\]
The monomial $x^Ay^B$ is in $\mbox{In}(J(\X,\ba):y^e)$ if and only if $A\ge 0,B\ge 0$, and the inequality $\sum_{i=1}^j a_i-(j-1)e<jA+(j-1)B$ is satisfied for some $j=1,\ldots,s$.
\end{Corollary}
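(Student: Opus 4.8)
The plan is to reduce everything to the already-understood monomial ideal $\mbox{In}(J(\X,\ba))$ and then to analyze how a lex-segment ideal in two variables behaves under coloning by a power of the smaller variable. Since graded reverse lexicographic and lexicographic order coincide in two variables, Proposition~\ref{prop:commutingcolon} gives $\mbox{In}(J(\X,\ba):y^e)=\mbox{In}(J(\X,\ba)):y^e$, so it suffices to study the monomial ideal $I:y^e$, where I write $I:=\mbox{In}(J(\X,\ba))$. By Corollary~\ref{cor:LexSeg1D}, $I$ is a lex-segment ideal, and its Hilbert function is supplied by Corollary~\ref{cor:gerschenck}.

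First I would record the elementary structure of a lex-segment ideal in $R=\RR[x,y]$ with $x>y$. In each degree $d$, the space $I_d$ is spanned by the $\dim I_d$ lex-largest monomials of degree $d$, namely $x^d, x^{d-1}y,\ldots$; equivalently, for $A+B=d$ one has $x^Ay^B\in I$ if and only if $B<\dim I_d$. The colon then unwinds directly: $x^Ay^B\in I:y^e$ if and only if $x^Ay^{B+e}\in I$, which (applied in degree $A+B+e$) holds if and only if $B+e<\dim I_{A+B+e}$, i.e. $B<\dim I_{d+e}-e$ when $d=A+B$. Since such $B$ satisfy $B\le d$, the complementary exponent $A=d-B\ge 0$ is automatic. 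This simultaneously shows that $(I:y^e)_d$ consists of the $[\dim I_{d+e}-e]_+$ lex-largest monomials of degree $d$, so that $I:y^e$ is again a lex-segment ideal, and that $\dim(I:y^e)_d=[\dim I_{d+e}-e]_+$. The only point requiring care is the truncation: when $\dim I_{d+e}\le e$ the colon vanishes in degree $d$, which is exactly what the $[\,\cdot\,]_+$ records.

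Substituting the value $\dim I_{d+e}=\dim J_{d+e}(\X,\ba)=\min\{d+e+1,\sum_{i=1}^s[d+e-a_i]_+\}$ from Corollary~\ref{cor:gerschenck} into $[\dim I_{d+e}-e]_+$ then yields the displayed Hilbert function. For the monomial membership criterion I would once more use that $x^Ay^B\in\mbox{In}(J(\X,\ba):y^e)$ if and only if $x^Ay^{B+e}\in I$, and feed this into Corollary~\ref{cor:GeneralMonomialDescription}: the latter membership holds precisely when $A\ge 0$, $B+e\ge 0$, and $\sum_{i=1}^j a_i<jA+(j-1)(B+e)$ holds for some $1\le j\le s$. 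As $e\ge 0$ the condition $B+e\ge 0$ is automatic, and expanding $(j-1)(B+e)$ and moving the term $(j-1)e$ to the left rearranges the inequality into $\sum_{i=1}^j a_i-(j-1)e<jA+(j-1)B$, which is the stated criterion.

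The genuinely substantive inputs are all imported -- the interchange of initial ideal and colon (Proposition~\ref{prop:commutingcolon}), the lex-segment property (Corollary~\ref{cor:LexSeg1D}), and the Hilbert-function and monomial descriptions (Corollaries~\ref{cor:gerschenck} and~\ref{cor:GeneralMonomialDescription}) -- so I do not expect any step to present a real obstacle. The one place to be vigilant is the bookkeeping around the positivity operator $[\,\cdot\,]_+$: both the shift by $e$ and the degeneration of the colon ideal to zero in low degrees must be routed through the truncation, and one should read the inner term of the displayed formula as $\dim\mbox{In}(J(\X,\ba))_{d+e}=\min\{d+e+1,\sum_{i=1}^s[d+e-a_i]_+\}$, so that the two expressions for the Hilbert function genuinely agree after applying $[\,\cdot-e\,]_+$.
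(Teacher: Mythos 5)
Your proposal is correct and follows essentially the same route as the paper's proof: reduce to the monomial ideal via Proposition~\ref{prop:commutingcolon}, use the colon identification $x^Ay^B\in \mbox{In}(J(\X,\ba)):y^e \iff x^Ay^{B+e}\in\mbox{In}(J(\X,\ba))$ together with the lex-segment structure to get both the lex-segment property and the shifted, truncated Hilbert function, and then apply Corollary~\ref{cor:GeneralMonomialDescription} for the membership criterion. The only cosmetic difference is that the paper phrases the bijection through the identity $(\mbox{In}(J(\X,\ba)):y^e)\,y^e=\mbox{In}(J(\X,\ba))\cap\langle y^e\rangle$, while you argue directly on exponents; the content is identical.
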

\begin{proof}
Due to Proposition~\ref{prop:commutingcolon} and the fact that graded lexicographic and graded reverse lexicographic orders coincide in two variables, we have $\mbox{In}(J(\X,\ba):y^e)=\mbox{In}(J(\X,\ba)):y^e$.  Now, a well-known identity is that $(\mbox{In}(J(\X,\ba)):y^e)y^e=\mbox{In}(J(\X,\ba))\cap\langle y^e\rangle$.  Said otherwise, the monomials in $\mbox{In}(J(\X,\ba)):y^e$ of degree $d$ are in bijection with the monomials of degree $d+e$ in $\mbox{In}(J(\X,\ba))$ which are divisible by $y^e$.  Since $(\mbox{In}(J(\X,\ba)))_{d+e}$ is spanned by lex-largest monomials, $\mbox{In}(J(\X,\ba)):y^e$ is either empty or consists of the $\dim(\mbox{In}(J(\X,\ba)))_{d+e}-e$ lex-largest monomials of degree $d$.  This establishes both that $\mbox{In}(J(\X,\ba)):y^e$ is lex-segment and the claimed form of the Hilbert function.

For the description of the monomials $x^Ay^B$ which are in $\mbox{In}(J(\X,\ba):y^e)=\mbox{In}(J(\X,\ba)):y^e$, it suffices to observe that $x^Ay^B\in \mbox{In}(J(\X,\ba)):y^e$ if and only if $x^Ay^{B+e}\in \mbox{In}(J(\X,\ba))$.  Then apply Corollary~\ref{cor:GeneralMonomialDescription}.
\end{proof}

% \begin{Remark}
% Computations indicate that $\mbox{In}(J(\X,\ba):L^e)$ is a lex-segment ideal for many choices of linear form $L$ besides $y$.
% \end{Remark}

\subsection{A summation property of Gr\"{o}bner bases}
We would like to prove a general fact, which will be useful in later sections. We refer the reader to~\cite[Chapter~2]{cox2015iva} for basics on Gr\"obner bases and the Buchberger algorithm, and we follow the same notation.
\begin{Lemma}\label{lemma:general_fact_revlex}
Let $R$ be the polynomial ring $\RR[x,y,z]$. Assume $I$ is a homogeneous ideal generated by polynomials in the variables $x$ and $z$ and $J$ is a homogeneous ideal generated by polynomials in the variables $y$ and $z$, then a Gr\"obner basis for $I+J$ with respect to graded lexicographic (or graded reverse lexicographic) order can be obtained by taking the union of the Gr\"obner bases of $I$ and $J$ with respect to the graded lexicographic (or graded reverse lexicographic) order.  In particular, $\Initial(I+J)=\Initial(I)+\Initial(J)$.
\end{Lemma}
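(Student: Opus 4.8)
The plan is to verify Buchberger's $S$-pair criterion \cite{cox2015iva} for $G := G_I\cup G_J$, where $G_I$ and $G_J$ are Gr\"obner bases of $I$ and $J$. Once $G$ is shown to be a Gr\"obner basis of $I+J$, the equality $\Initial(I+J)=\Initial(I)+\Initial(J)$ is immediate: $\Initial(I+J)$ is generated by the leading terms of $G$, which are the leading terms of $G_I$ together with those of $G_J$, i.e. generators of $\Initial(I)+\Initial(J)$. Since $I$ and $J$ are homogeneous, I may take $G_I$ and $G_J$ homogeneous, with $G_I$ involving only $x,z$ and $G_J$ only $y,z$. Among the $S$-polynomials I must test, a pair from $G_I$ reduces to zero modulo $G_I\subseteq G$ (as $G_I$ is a Gr\"obner basis of $I$), and the same reduction steps remain valid modulo the larger set $G$; pairs from $G_J$ are handled identically. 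Thus the entire content of the lemma lies in the \emph{cross pairs} $(g,h)$ with $g\in G_I$ and $h\in G_J$.

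Fix such a pair and write (after rescaling to make leading terms monic) $\mathrm{LT}(g)=x^az^b$ and $\mathrm{LT}(h)=y^{a'}z^{b'}$. The main obstacle is that one cannot simply invoke the coprimality (first Buchberger) criterion: the two leading terms share the variable $z$, so $\gcd(\mathrm{LT}(g),\mathrm{LT}(h))=z^{\min(b,b')}$, which is nontrivial as soon as $b,b'\ge 1$. Absorbing this common $z$-factor is the crux of the argument.

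The resolution uses that our orders are graded and, restricted to two variables, rank monomials of a fixed degree by the power of the larger variable. Combined with homogeneity of $g$, this forces every monomial of $g$ to have $z$-exponent at least $b$, so $z^b\mid g$; likewise $z^{b'}\mid h$. Writing $g=z^b\bar g$ and $h=z^{b'}\bar h$, the leading terms become $\mathrm{LT}(\bar g)=x^a$ and $\mathrm{LT}(\bar h)=y^{a'}$, which are now genuinely coprime. Assuming without loss $b\le b'$, a direct computation gives $S(g,h)=z^{b'}S(\bar g,\bar h)$. The coprimality criterion supplies the standard representation $S(\bar g,\bar h)=\bar g'\,\bar h-\bar h'\,\bar g$, where $\bar g'=\bar g-\mathrm{LT}(\bar g)$ and $\bar h'=\bar h-\mathrm{LT}(\bar h)$, in which each summand has multidegree strictly below $\mathrm{lcm}(\mathrm{LT}(\bar g),\mathrm{LT}(\bar h))$. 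Multiplying through by $z^{b'}$ and pushing the $z$-powers back into $g$ and $h$ yields $S(g,h)=\bar g'\,h-z^{b'-b}\bar h'\,g$, a standard representation in terms of $g,h\in G$ whose summands have multidegree strictly below $\mathrm{lcm}(\mathrm{LT}(g),\mathrm{LT}(h))$. By the standard criterion \cite{cox2015iva} this forces the remainder of $S(g,h)$ on division by $G$ to be zero, so Buchberger's test is satisfied and $G$ is a Gr\"obner basis of $I+J$, as required.
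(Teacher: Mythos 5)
Your proposal is correct and follows essentially the same route as the paper: both verify Buchberger's criterion for the union $G_I\cup G_J$, reduce to the cross pairs, use homogeneity together with the graded order to show the tails of $g$ and $h$ are divisible by $z^b$ and $z^{b'}$, and then exhibit an explicit standard representation of $S(g,h)$ that satisfies the refined criterion of \cite[Chapter~2, Section~9]{cox2015iva}. The only cosmetic difference is that you factor out the $z$-powers and invoke the coprime (first Buchberger) criterion for $\bar g,\bar h$ before lifting, whereas the paper manipulates $S(f,g)$ directly and checks non-cancellation of lead terms; the resulting representation $S(g,h)=\bar g'\,h-z^{b'-b}\bar h'\,g$ is identical to the paper's.
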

\begin{proof}
Let $\mathcal{G}_1$ be a Gr\"obner basis for $I$ and $\mathcal{G}_2$ be a Gr\"obner basis for $J$, both taken with respect to either graded lexicographic order or graded reverse lexicographic order.  It suffices to show that $\mathcal{G}=\mathcal{G}_1\cup\mathcal{G}_2$ satisfies Buchberger's criterion - that is, the $S$-pair $S(f,g)$ of any two $f,g\in \mathcal{G}$ reduces to zero under the division algorithm.  This is clearly true if both $f$ and $g$ are in $\mathcal{G}_1$ or both $f$ and $g$ are in $\mathcal{G}_2$.  So we assume $f\in\mathcal{G}_1, g\in\mathcal{G}_2$.  We further assume the leading coefficients of $f$ and $g$ are normalized to $1$.  Let $\mbox{\textsc{LT}}(f)=x^Az^C$ and $\mbox{\textsc{LT}}(g)=y^Bz^D$.  Then
\[
\begin{array}{c}
f=x^Az^C+\mbox{terms in } x,z \mbox{ divisible by }z^C\\
g=y^Bz^D+\mbox{terms in } y,z \mbox{ divisible by }z^D
\end{array}.
\]
Put $f'=f-x^Az^C$ and $g'=g-y^Bz^D$.  Assume $C\ge D$ (the case $D\ge C$ is entirely analogous).  Then
\[
S(f,g)=\frac{g-g'}{z^D}f-\frac{f-f'}{z^D}g=\frac{f'}{z^D}g-\frac{g'}{z^D}f,
\]
where $\frac{f'}{z^D}$ and $\frac{g'}{z^D}$ are both polynomials because every term of $f'$ is divisible by $z^C$ (and hence $z^D$ since $C\ge D$) and every term of $g'$ is divisible by $z^D$.  There is no cancellation between the lead terms of $f'g/z^D$ and $g'f/z^D$ since the lead term of $f'g/z^D$ has a higher power of $y$ in it than $fg'/z^D$.  Thus $\textsc{LT}(S(f,g))=\max\left\lbrace\mbox{\textsc{LT}}\left(\frac{f'}{z^D}g\right), \mbox{\textsc{LT}}\left(\frac{g'}{z^D}f\right)\right\rbrace$.  Since $\mbox{\textsc{LT}}\left(\frac{f'}{z^D}g\right)\le \mbox{\textsc{LT}}(S(f,g))$ and $\mbox{\textsc{LT}}\left(\frac{g'}{z^D}f\right)\le \mbox{\textsc{LT}}(S(f,g))$,
\[
S(f,g)=\frac{f'}{z^D}g-\frac{g'}{z^D}f
\]
is what is called a \textit{standard representation} of $S(f,g)$ in~\cite[Section~9]{cox2015iva}.  It is shown in~\cite[Section~9]{cox2015iva} that if every $S$-pair of $\mathcal{G}$ has a standard representation, then $\mathcal{G}$ is a Gr\"obner basis, and so the result follows.
\end{proof}

%%%%\input{sections/reg_one_total_int_case}
%\section{The homology module}\label{setion:section_1tot}
\section{The dimension formula expressed via lattice points}\label{setion:section_1tot}

In this section we prove our first version of the dimension formula for $C^r_d(\Delta)$ when $\Delta$ is a triangulation with a single totally interior edge.  We also characterize when $\dim C^r_d(\Delta)$ begins to agree with Schumaker's lower bound.  We record these as two separate results, and prove them at the very end of the section.

\begin{Theorem}\label{prop:nontrivialcases}
Let $\Delta$ be a triangulation with a single totally interior edge $\tau$ satisfying Assumptions~\ref{assumptions}.  Then for all integers $d\ge 0$,
\begin{equation*}
\dim C^r_d(\Delta) =L(\Delta,d,r)+\#(\calP\cap\ZZ^3\cap H_d)
\end{equation*}
where $H_d=\{(A,B,C)\in \RR^3:A+B+C=d-r-1\}$ and $\calP$ is the polytope in $\RR^3$ defined by $A,B,C\ge 0$, $sA+(s-1)C\le r+1-s,$ and $tB+(t-1)C\le r+1-t$. 
 Equivalently, for all integers $d\ge 0$,
\[
\dim C^r_d(\Delta)=L(\Delta,d,r)+\#(\calP_d\cap\ZZ^2),
\]
where $\calP_d$ is the polygon in $\RR^2$ defined by the inequalities $A\ge 0, B\ge 0$, $A-B(s-1)\le sr-d(s-1)$, $B-A(t-1)\le tr-d(t-1)$, and $A+B\le d-r-1$.
\end{Theorem}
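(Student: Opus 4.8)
The plan is to feed the homology computation of Lemma~\ref{lem:presentation_H0J} into Schenck and Stillman's dimension formula (Theorem~\ref{thm:splinedimension}) and thereby reduce everything to counting standard monomials of an explicit monomial ideal. By Theorem~\ref{thm:splinedimension}, $\dim C^r_d(\Delta)=L(\Delta,d,r)+\dim H_1(\calR_\bullet/\calJ_\bullet)_d$, and by Lemma~\ref{lem:presentation_H0J} together with $J(\tau)=\langle z^{r+1}\rangle$ we have
\[
H_1(\calR_\bullet/\calJ_\bullet)\cong R/\bigl(M\bigr)(-r-1),\qquad M=(J_1:z^{r+1})+(J_2:z^{r+1}),
\]
so that $\dim H_1(\calR_\bullet/\calJ_\bullet)_d=\dim[R/M]_{d-r-1}$. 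Hence it suffices to identify the standard monomials of $R/M$ (those not in $\Initial(M)$) with the lattice points of $\calP$, after which the degree-$(d-r-1)$ count is exactly $\#(\calP\cap\ZZ^3\cap H_d)$. Throughout I would fix the graded reverse lexicographic order on $R=\RR[x,y,z]$ with $x>y>z$, so that $z$ is the last variable and the order restricts to the two-variable order used in Section~\ref{sec:init} on both $\RR[x,z]$ and $\RR[y,z]$.

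First I would compute each colon ideal separately using the machinery of Section~\ref{sec:init}. In the chosen coordinates $J_1=\langle(x+b_1z)^{r+1},\dots,(x+b_sz)^{r+1}\rangle$ is exactly a power ideal $J(\X_1,r)$ in the two variables $x,z$, where $\X_1$ consists of the $s$ \emph{distinct} points $[1:b_i]$ of multiplicity $r$; because each form is monic in $x$, no point has vanishing first coordinate, so the hypotheses of Corollary~\ref{cor:equalValuedMonomialDescription} and Corollary~\ref{cor:colontwovar} are met. Colon by $z^{r+1}$ then satisfies $x^Az^C\in\Initial(J_1:z^{r+1})$ iff $x^Az^{C+r+1}\in\Initial(J_1)$, and Corollary~\ref{cor:equalValuedMonomialDescription} turns this into the single inequality $sr<sA+(s-1)(C+r+1)$, which simplifies to $sA+(s-1)C>r+1-s$; likewise $\Initial(J_2:z^{r+1})$ consists of the $y^Bz^C$ with $tB+(t-1)C>r+1-t$. (Both colon ideals are lex-segment by Corollary~\ref{cor:colontwovar}.)

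Next I would assemble $\Initial(M)$ from these two pieces. Since $J_1$ and $\langle z^{r+1}\rangle$ both lie in $\RR[x,z]$, the colon $J_1:z^{r+1}$ is generated by polynomials in $x,z$ alone (and similarly $J_2:z^{r+1}$ by polynomials in $y,z$), so Lemma~\ref{lemma:general_fact_revlex} applies and gives $\Initial(M)=\Initial(J_1:z^{r+1})+\Initial(J_2:z^{r+1})$. A monomial $x^Ay^Bz^C$ lies in this sum iff $x^Az^C$ lies in the first summand or $y^Bz^C$ lies in the second, so the standard monomials of $R/M$ are precisely those with $A,B,C\ge0$, $sA+(s-1)C\le r+1-s$, and $tB+(t-1)C\le r+1-t$ — exactly the lattice points of $\calP$. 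Counting those of degree $A+B+C=d-r-1$ yields the first formula $\#(\calP\cap\ZZ^3\cap H_d)$. For the second formulation I would eliminate $C=d-r-1-A-B$: substituting into the two defining inequalities of $\calP$ and using $C\ge0\Leftrightarrow A+B\le d-r-1$ produces exactly the inequalities defining $\calP_d$ (e.g. $sA+(s-1)C\le r+1-s$ becomes $A-(s-1)B\le sr-(s-1)d$), and since the projection $(A,B,C)\mapsto(A,B)$ is a bijection of lattice points from $\calP\cap\ZZ^3\cap H_d$ onto $\calP_d\cap\ZZ^2$, the two counts agree.

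I expect the only genuine obstacle to be the bookkeeping in the assembly step: one must verify that the colon ideals remain generated in the correct pairs of variables and that a \emph{single} term order — grevlex with $z$ last — simultaneously validates Proposition~\ref{prop:commutingcolon} (so colon commutes with $\Initial$), the monomial descriptions of Section~\ref{sec:init} (which treat the coloned variable as the last variable), and Lemma~\ref{lemma:general_fact_revlex} (whose proof presumes leading terms of the shapes $x^Az^C$ and $y^Bz^D$). Once the identity $\Initial(M)=\Initial(J_1:z^{r+1})+\Initial(J_2:z^{r+1})$ is in hand, the remaining monomial counting and the two-to-three-variable translation are purely routine.
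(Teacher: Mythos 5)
Your proposal is correct and follows essentially the same route as the paper: it combines Theorem~\ref{thm:splinedimension} with the presentation $H_1(\calR_\bullet/\calJ_\bullet)\cong R/(J_1:J(\tau)+J_2:J(\tau))(-r-1)$ from Lemma~\ref{lem:presentation_H0J}, identifies the standard monomials of the initial ideal via Proposition~\ref{prop:commutingcolon}, Lemma~\ref{lemma:general_fact_revlex}, and the two-variable power-ideal results of Section~\ref{sec:init} (this is exactly the paper's Corollary~\ref{cor:initsumEsuminit} and Lemma~\ref{lem:quotientbasis}), and then passes to the planar count by eliminating $C$ as in Proposition~\ref{prop:2Dpolygon}. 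The only cosmetic difference is that you derive the single inequality $sA+(s-1)C\le r+1-s$ by commuting the colon past $\Initial$ and invoking Corollary~\ref{cor:equalValuedMonomialDescription}, where the paper cites Corollary~\ref{cor:colontwovar} directly; these are equivalent.
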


\begin{Theorem}\label{thm:LBregularity}
Let $\Delta$ be a triangulation with a single totally interior edge $\tau$ satisfying Assumptions~\ref{assumptions}.  If $r+1\equiv s-1\mod s$ and $r+1\equiv t-1\mod t$ then
	\begin{align*}
		\dim C^r_d(\Delta)> & L(\Delta,d,r) \mbox{ for }  d=\left\lfloor\frac{r+1}{s}\right\rfloor+\left\lfloor\frac{r+1}{t}\right\rfloor+r, \mbox{and}\\
		\dim C^r_d(\Delta)= & L(\Delta,d,r) \mbox{ for } d\ge\left\lfloor\frac{r+1}{s}\right\rfloor+\left\lfloor\frac{r+1}{t}\right\rfloor+r+1.
	\end{align*}
	Otherwise,
	\begin{align*}
		\dim C^r_d(\Delta)> & L(\Delta,d,r) \mbox{ for }  d=\left\lfloor\frac{r+1}{s}\right\rfloor+\left\lfloor\frac{r+1}{t}\right\rfloor+r-1\mbox{ and }\\
		\dim C^r_d(\Delta)= & L(\Delta,d,r) \mbox{ for } d\ge\left\lfloor\frac{r+1}{s}\right\rfloor+\left\lfloor\frac{r+1}{t}\right\rfloor+r.
	\end{align*}
\end{Theorem}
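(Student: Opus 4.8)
The plan is to recast the statement as a lattice-point optimization problem via the first (three-dimensional) formulation of Theorem~\ref{prop:nontrivialcases}, which gives $\dim C^r_d(\Delta)=L(\Delta,d,r)+\#(\calP\cap\ZZ^3\cap H_d)$. Thus $\dim C^r_d(\Delta)>L(\Delta,d,r)$ precisely when $\calP$ contains a lattice point with coordinate sum $A+B+C=d-r-1$. Since $\calP$ is bounded, the quantity
\[
M:=\max\{A+B+C : (A,B,C)\in\calP\cap\ZZ^3\}
\]
is finite and attained, and both conclusions in each case reduce to identifying $M$: at $d=M+r+1$ some lattice point lies on $H_d$, so $\dim C^r_d(\Delta)>L(\Delta,d,r)$, while for $d\ge M+r+2$ we have $d-r-1>M$, forcing $H_d\cap\calP\cap\ZZ^3=\emptyset$ and hence $\dim C^r_d(\Delta)=L(\Delta,d,r)$. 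So the whole theorem becomes the computation of $M$ together with the exhibition of an optimal lattice point.

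To compute $M$ I would first fix $C$ and maximize $A+B$ over lattice points subject to $sA+(s-1)C\le r+1-s$ and $tB+(t-1)C\le r+1-t$; the optimal values are $A=\lfloor(r+1-s-(s-1)C)/s\rfloor$ and $B=\lfloor(r+1-t-(t-1)C)/t\rfloor$ when these are non-negative. Substituting $n=r+1+C$, a short manipulation collapses the one-variable objective to
\[
A+B+C=\left\lfloor\frac{n}{s}\right\rfloor+\left\lfloor\frac{n}{t}\right\rfloor-n+(r-1)=:h(n)+(r-1),
\]
so that $M=(r-1)+\max_{n\ge r+1}h(n)$ with $h(n)=\lfloor n/s\rfloor+\lfloor n/t\rfloor-n$. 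The crux is then to show
\[
\max_{n\ge r+1}h(n)=h(r+1)+\epsilon,
\]
where $\epsilon=1$ if $r+1\equiv s-1\pmod s$ and $r+1\equiv t-1\pmod t$, and $\epsilon=0$ otherwise. Writing $\rho_s=(r+1)\bmod s$ and $\rho_t=(r+1)\bmod t$, the difference $h(r+1+c)-h(r+1)$ simplifies to $-c+\lfloor(\rho_s+c)/s\rfloor+\lfloor(\rho_t+c)/t\rfloor$, which I would bound above by $-c+\lceil c/s\rceil+\lceil c/t\rceil$. When $(s,t)\ne(2,2)$ one has $1/s+1/t<1$, so for $c\ge2$ this quantity is strictly below $1$ and hence at most $0$; for $c=1$ it equals exactly the number of the two congruence conditions that hold, minus one. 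This pins the maximum to $c=0$ or $c=1$ and identifies $\epsilon$ as stated. The degenerate case $(s,t)=(2,2)$, where $1/s+1/t=1$ and the linear drift of $h$ vanishes, I would treat separately and explicitly: there $h(n)=-2\{n/2\}\in\{-1,0\}$, with maximum $0$ attained at $n=r+1$ when $r$ is odd and at $n=r+2$ when $r$ is even, matching the two cases.

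Translating back, the congruences $\rho_s=s-1$, $\rho_t=t-1$ are exactly the hypothesis $r+1\equiv s-1\pmod s$ and $r+1\equiv t-1\pmod t$, and $M=(r-1)+\max h=\lfloor(r+1)/s\rfloor+\lfloor(r+1)/t\rfloor-2+\epsilon$, so $d=M+r+1=\lfloor(r+1)/s\rfloor+\lfloor(r+1)/t\rfloor+r-1+\epsilon$, which is precisely the threshold in each branch. To legitimately conclude strict inequality at this $d$, I must also verify that the optimizing point genuinely lies in $\calP$, i.e. that the optimal $A,B$ are non-negative: at $c=0$ this follows from $s,t\le r+1$, and at $c=1$ (only relevant in the special case) from the fact that $s$ and $t$ are then proper divisors of $r+2$ and hence at most $(r+2)/2$.

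I expect the main obstacle to be the step establishing $\max_{n\ge r+1}h(n)=h(r+1)+\epsilon$ uniformly in $c$ — in particular, ruling out that some intermediate or large $c$, where the accumulated floor/ceiling slack could in principle compete, produces a lattice point of strictly larger coordinate sum. The boundary case $(s,t)=(2,2)$, where the drift degenerates so that the generic $c\ge2$ estimate no longer forces $\le 0$, is the delicate point and requires the separate explicit computation of $h$ indicated above.
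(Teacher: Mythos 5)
Your proposal is correct, and it reaches the theorem by a genuinely different computation than the paper's. Both arguments reduce the statement to finding the largest coordinate sum $M$ of a lattice point of $\calP$ (equivalently, via Lemma~\ref{lem:presentation_H0J}, the regularity of $H_1(\calR_\bullet/\calJ_\bullet)$), with strict inequality at $d=M+r+1$ and equality beyond; the difference lies in how $M$ is computed. The paper (Proposition~\ref{prop:bounds_H0J} together with Lemma~\ref{lem:boundC}) first brackets $M$ in a window of length one: the lower bound comes from the explicit monomial $x^{\lfloor(r+1)/s\rfloor-1}y^{\lfloor(r+1)/t\rfloor-1}$, and the upper bound from \emph{real} linear programming over $\calP$, enumerating its vertices and comparing the values of $A+B+C$ there; it then decides between the two candidate values by a separate lattice-point analysis of the intermediate hyperplane (the $C_0=1$ argument for $t\ge 3$, with $s=t=2$ treated apart). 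You instead perform exact integer elimination: fixing $C$ and taking the largest admissible integers $A,B$ collapses the problem to maximizing the single-variable function $h(n)=\lfloor n/s\rfloor+\lfloor n/t\rfloor-n$ over integers $n\ge r+1$, and the estimate $h(r+1+c)-h(r+1)=\lfloor(\rho_s+c)/s\rfloor+\lfloor(\rho_t+c)/t\rfloor-c\le\lceil c/s\rceil+\lceil c/t\rceil-c$ settles the maximum in one stroke, with the congruence criterion falling out of the $c=1$ case; your separate treatment of $(s,t)=(2,2)$, where $1/s+1/t=1$ kills the strict decay, is exactly the case the paper also has to isolate in Lemma~\ref{lem:boundC}. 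Your feasibility verification at the optimum (nonnegativity of $A,B$ via $s,t\le r+1$ at $c=0$, and via $s,t$ being proper divisors of $r+2$, hence at most $(r+2)/2$, at $c=1$) is the necessary analogue of the paper's nonemptiness checks and is handled correctly. As for what each approach buys: yours is more elementary and unified, avoiding both the vertex enumeration and the separate borderline lemma, and it pinpoints exactly where the maximum occurs; the paper's route, on the other hand, produces the vertex coordinates of $\calP$ and the key inequality between the values of $A+B+C$ at its vertices, which it reuses later in the proof of Corollary~\ref{cor:crosscut}, so its linear-programming detour is not wasted work in the broader context of the paper.
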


\begin{Remark}
In case there are three slopes that meet at each endpoint of the interior edge $\tau$ (so $s=t=2$), Theorem~\ref{thm:LBregularity} yields that $\dim C^r_d(\Delta)=L(\Delta,d,r)$ for $d\ge 2r+1$, which recovers the main result of Toh\v{a}neanu and Min\'{a}\v{c} in~\cite{Minac-Tohaneanu-2013}.  We say more on this in Example~\ref{ex:s2t2}.
\end{Remark}

% \begin{proof}
%     This is an immediate consequence of Theorem \ref{thm:splinedimension}, Lemma~\ref{lem:quotientbasis}, and Proposition \ref{prop:2Dpolygon}.
% \end{proof}

\begin{Corollary}\label{cor:2r+1}
If $\Delta$ has a single totally interior edge, then $\dim C^r_d(\Delta)=L(\Delta,r,d)$ for $d\ge 2r+1$, so $\Delta$ satisfies the `$2r+1$' conjecture of Schenck~\cite{Schenck-1997-thesis} (see also~\cite[Conjecture~2.1]{schenck2002cohomology}).  In particular, $\dim C^1_d(\Delta)=L(\Delta,r,d)$ for $d\ge 3$, so $\Delta$ satisfies the conjecture of Alfeld and Manni for $\dim C^1_3(\Delta)$ (see~\cite[Conjecture~3]{Alfeld-2016-survey} or~\cite{schenck2016algebraic}).
\end{Corollary}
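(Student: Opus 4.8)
The plan is to derive Corollary~\ref{cor:2r+1} directly from Theorem~\ref{thm:LBregularity}, which already pins down the smallest degree $d_0$ beyond which $\dim C^r_d(\Delta)=L(\Delta,d,r)$. The strategy is simply to show that the threshold supplied by Theorem~\ref{thm:LBregularity} is always at most $2r+1$, so that agreement with Schumaker's lower bound holds for all $d\ge 2r+1$. Concretely, Theorem~\ref{thm:LBregularity} gives that $\dim C^r_d(\Delta)=L(\Delta,d,r)$ whenever
\[
d\ge \left\lfloor\frac{r+1}{s}\right\rfloor+\left\lfloor\frac{r+1}{t}\right\rfloor+r+1,
\]
in the first (congruence) case, and for $d$ one smaller in the generic case. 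So it suffices to bound the quantity $\lfloor (r+1)/s\rfloor+\lfloor (r+1)/t\rfloor+r+1$ from above by $2r+1$.

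First I would recall from Assumptions~\ref{assumptions} that $2\le s\le t\le r+1$. Since $s\ge 2$ we have $\lfloor (r+1)/s\rfloor\le \lfloor (r+1)/2\rfloor$, and likewise $\lfloor (r+1)/t\rfloor\le \lfloor (r+1)/2\rfloor$. Hence
\[
\left\lfloor\frac{r+1}{s}\right\rfloor+\left\lfloor\frac{r+1}{t}\right\rfloor \le 2\left\lfloor\frac{r+1}{2}\right\rfloor \le r+1.
\]
Adding $r$ to both sides, the larger threshold appearing in Theorem~\ref{thm:LBregularity} (the congruence case, $d\ge \lfloor(r+1)/s\rfloor+\lfloor(r+1)/t\rfloor+r+1$) satisfies
\[
\left\lfloor\frac{r+1}{s}\right\rfloor+\left\lfloor\frac{r+1}{t}\right\rfloor+r+1\le (r+1)+r+1 = 2r+2.
\]
This is one too large, so I would sharpen the estimate: $2\lfloor (r+1)/2\rfloor$ equals $r+1$ only when $r$ is odd, and in that case I claim the congruence hypothesis $r+1\equiv s-1\pmod s$ and $r+1\equiv t-1\pmod t$ cannot hold with $s=t=2$ simultaneously forcing equality, which would need checking. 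The cleaner route is to observe that equality $\lfloor (r+1)/s\rfloor=\lfloor(r+1)/2\rfloor$ with the worst constant forces $s=2$ (similarly $t=2$); when $s=t=2$ the congruence condition reads $r+1\equiv 1\pmod 2$, i.e. $r$ even, whence $\lfloor(r+1)/2\rfloor=r/2$ and the threshold is $r/2+r/2+r+1=2r+1$, exactly meeting the bound.

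The main obstacle is therefore the bookkeeping at the boundary: I must verify case by case that in the congruence branch of Theorem~\ref{thm:LBregularity} the threshold never exceeds $2r+1$, and that in the generic branch (threshold one smaller) it is at most $2r$, so $d=2r+1$ always lies in the agreement range. I expect the argument to split on the parity of $r$ and on whether $s$ (or $t$) equals $2$, using that the congruence hypotheses restrict which residues $r+1$ can take modulo $s$ and $t$. Once the inequality $\lfloor(r+1)/s\rfloor+\lfloor(r+1)/t\rfloor+r+1\le 2r+1$ (congruence case) and its generic-case analogue are established, the corollary follows immediately. For the final sentence, I would specialize to $r=1$: then $2r+1=3$, so $\dim C^1_d(\Delta)=L(\Delta,1,d)$ for $d\ge 3$, which is precisely the Alfeld--Manni prediction for $\dim C^1_3(\Delta)$, completing the proof.
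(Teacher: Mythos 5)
Your core computation is sound and follows the same route as the paper: the paper also derives the corollary from Theorem~\ref{thm:LBregularity} together with $t\ge s\ge 2$, and your sharpening in the congruence branch (the sum of floors can reach $r+1$ only when $s=t=2$ and $r$ is odd, which is incompatible with $r+1\equiv s-1\bmod s$ when $s=2$) is exactly the right observation. However, there is a genuine gap: the corollary is stated for \emph{every} triangulation with a single totally interior edge, whereas Theorem~\ref{thm:LBregularity} applies only under Assumptions~\ref{assumptions}. You invoke those assumptions at the outset, but they genuinely exclude triangulations — those where $\tau$ shares a slope with an edge adjacent to $v_1$ or $v_2$, and those where $s$ or $t$ exceeds $r+1$ (equivalently, at least $r+3$ slopes meet at an endpoint of $\tau$). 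These cases are not vacuous: for the Alfeld--Manni part ($r=1$), any triangulation with four or more slopes at an endpoint of $\tau$ (for instance the one in Figure~\ref{fig:s3t4}) falls outside the Assumptions, so your argument says nothing about it. The paper closes these cases with Proposition~\ref{prop:trivialcases}, which gives $\dim C^r_d(\Delta)=L(\Delta,d,r)$ for \emph{all} $d\ge 0$ there; your proof needs that citation to cover the full statement.

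A minor secondary point: your claim that in the generic branch the threshold is ``at most $2r$'' is false — for $s=t=2$ and $r$ odd the congruences fail and the generic threshold equals $\lfloor(r+1)/2\rfloor+\lfloor(r+1)/2\rfloor+r=2r+1$. Fortunately you never need that stronger claim: your own bound $\lfloor(r+1)/s\rfloor+\lfloor(r+1)/t\rfloor\le 2\lfloor(r+1)/2\rfloor\le r+1$ already gives a generic-branch threshold of at most $2r+1$, which is all the corollary requires.
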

\begin{proof}
This is immediate from Proposition~\ref{prop:trivialcases} and Theorem~\ref{thm:LBregularity}, coupled with the fact that $t\ge s\ge 2$.
\end{proof}

We shall use Theorem~\ref{thm:splinedimension} to prove Theorems~\ref{prop:nontrivialcases} and~\ref{thm:LBregularity}, hence we spend the remainder of this section analyzing the homology module $H_1(\calR_\bullet/\calJ_\bullet)$, where $\calR_\bullet/\calJ_\bullet$ is the Billera-Schenck-Stillman chain complex from Section~\ref{sec:background}.  We use Assumptions~\ref{assumptions} throughout this section.
%%%End Michael:June 25
% In this section we return to the study of the homology module $H_1(\calR_\bullet/\calJ_\bullet)$, where $\calR_\bullet/\calJ_\bullet$ is the Billera-Schenck-Stillman chain complex from Section~\ref{sec:background}.  We assume that $\Delta$ is a triangulation with a single totally interior edge $\tau$ connecting interior vertices $v_1$ and $v_2$ as in Section~\ref{ss:one-interior-edge-setup}, with $s+1$ distinct slopes surrounding $v_1$ and $t+1$ distinct slopes surrounding $v_2$.  We assume also that the slope of $\tau$ is different from the slope of any other edge of $\Delta$.  If $s=1$ then either $p=1$ or $p=2$.  In either case it is not possible for $\Delta$ to be a triangulation.  (If $p=2$ it would be possible to have a so-called $T$-juncture or `hanging vertex' at $v_1$, but we do not allow these under our definition of a triangulation.)  Hence $2\leq s\leq t$. 
As we observed in Section~\ref{ss:one-interior-edge-setup}, we may change coordinates so that
\begin{align*}
J(\tau)&=\langle z^{r+1}\rangle,\\
J_{1}&=\langle (x+{b_1}z)^{r+1},(x+{b_2}z)^{r+1},\dots,(x+b_{s}z)^{r+1}\rangle,\mbox{ and}\\
J_{2}&=\langle (y+{c_1}z)^{r+1},(y+{c_2}z)^{r+1},\dots,(y+c_{t}z)^{r+1}\rangle.
\end{align*}
Using Lemma~\ref{lem:presentation_H0J}, Proposition~\ref{prop:commutingcolon}, and Lemma \ref{lemma:general_fact_revlex}, we obtain the following corollary.
\begin{Corollary}
\label{cor:initsumEsuminit}
With the above definition of $J_{1}$, $J_{2}$ and $J(\tau)$,
\begin{equation}\label{eq:initial_colon_ideal}
\Initial (J_{i}:J(\tau))=\Initial(J_{i}):J(\tau), ~\mbox{for}~i=1,2.
\end{equation}
and
\begin{equation}
\label{eq:sum_of_initials}
\Initial(J_{1}:J(\tau)+J_{2}:J(\tau))=\Initial(J_{1}:J(\tau))+\Initial(J_{2}:J(\tau)),
\end{equation}
where the initial ideal is taken with respect to graded lexicographic order or graded reverse lexicographic order.
\end{Corollary}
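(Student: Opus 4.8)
Since $J(\tau)=\langle z^{r+1}\rangle$, both $J_i:J(\tau)$ equal $J_i:z^{r+1}$, so the two identities are entirely statements about coloning by a power of $z$. The plan is to fix a graded reverse lexicographic order in which $z$ is the smallest variable, establish both equations in that order, and then transfer the conclusions to graded lexicographic order at the end. For the transfer I would use that every ideal appearing in the two equations is generated by polynomials in only two of the three variables, together with the fact (already exploited in Section~\ref{sec:init}) that graded lexicographic and graded reverse lexicographic orders agree in two variables; the proof of Lemma~\ref{lemma:general_fact_revlex} shows that a two-variable Gr\"obner basis remains a Gr\"obner basis in $\RR[x,y,z]$ for either order, so the relevant initial ideals are independent of the choice between the two orders.

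For equation~\eqref{eq:initial_colon_ideal} I would apply Proposition~\ref{prop:commutingcolon} with $x_n=z$. Since that proposition is stated for a single colon by the last variable, I would iterate it, using at the $k$-th step that
\[
\Initial(J_i:z^{k+1})=\Initial(J_i:z^{k}):z=\big(\Initial(J_i):z^{k}\big):z=\Initial(J_i):z^{k+1},
\]
and running $k$ from $0$ to $r$ to obtain $\Initial(J_i:J(\tau))=\Initial(J_i):J(\tau)$. (Alternatively one may invoke directly the arbitrary-power form recorded in the ``in particular'' clause of Proposition~\ref{prop:commutingcolon}.)

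Equation~\eqref{eq:sum_of_initials} is meant to be an application of Lemma~\ref{lemma:general_fact_revlex} to $I=J_1:z^{r+1}$ and $J=J_2:z^{r+1}$, which would immediately yield $\Initial(I+J)=\Initial(I)+\Initial(J)$ for both orders at once. The only thing to check, and the main obstacle, is the hypothesis of the lemma: that $J_1:z^{r+1}$ is generated by polynomials in $x,z$ and $J_2:z^{r+1}$ by polynomials in $y,z$. I would prove this by noting that $J_1$ is extended from the subring $R_1=\RR[x,z]$, since its generators $(x+b_iz)^{r+1}$ lie in $R_1$, and that we are coloning by $z^{r+1}\in R_1$. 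Writing $R=\bigoplus_{j\ge 0}R_1\,y^j$ as a free $R_1$-module, membership $z^{r+1}f\in J_1$ can be tested coefficientwise in $y$, which gives $J_1:z^{r+1}=\big((J_1\cap R_1):z^{r+1}\big)R$; in particular this colon is generated by polynomials in $x,z$ alone. (Equivalently, colon commutes with the free extension $R_1\hookrightarrow R$ for the principal ideal $\langle z^{r+1}\rangle$.) The symmetric computation over $\RR[y,z]$ handles $J_2:z^{r+1}$, and Lemma~\ref{lemma:general_fact_revlex} then applies verbatim. The rest is bookkeeping: once this two-variable persistence is secured, equation~\eqref{eq:initial_colon_ideal} is a direct iteration of Proposition~\ref{prop:commutingcolon} and equation~\eqref{eq:sum_of_initials} is an immediate appeal to Lemma~\ref{lemma:general_fact_revlex}.
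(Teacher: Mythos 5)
Your proposal is correct and takes essentially the same route as the paper: equation~\eqref{eq:initial_colon_ideal} via Proposition~\ref{prop:commutingcolon} and equation~\eqref{eq:sum_of_initials} via Lemma~\ref{lemma:general_fact_revlex}, with the order-transfer handled by the two-variable coincidence of graded lex and graded revlex. The additional details you supply — iterating the colon-by-$z$ statement over powers, and using the decomposition $R=\bigoplus_{j\ge 0}\RR[x,z]\,y^j$ to show $J_1:z^{r+1}=\bigl((J_1\cap\RR[x,z]):z^{r+1}\bigr)R$ is generated by polynomials in $x,z$ (and symmetrically for $J_2$) — are correct and fill in precisely the hypotheses that the paper's proof asserts without justification.
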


\begin{proof}
The equation \eqref{eq:initial_colon_ideal} follows from Proposition \ref{prop:commutingcolon}. Because $J_{1}:J(\tau)$ is only generated in polynomials in $x$ and $z$, and $J_{2}:J(\tau)$ is only generated in polynomials in $y$ and $z$, we may apply Lemma \ref{lemma:general_fact_revlex} here and obtain \eqref{eq:sum_of_initials}.
\end{proof}

\begin{Lemma}
\label{lem:quotientbasis}
A basis for $R/(J_1:J(\tau)+J_2:J(\tau))$ as an $\RR$-vector space is given by the monomials $x^Ay^Bz^C$ which satisfy the inequalities $A\ge 0, B\ge 0, C\ge 0, r+1-s\ge sA+(s-1)C,$ and $r+1-t\ge tB+(t-1)C$.
\end{Lemma}

\begin{proof}
A common use of initial ideals is that the monomials outside of $\Initial(I)$ form a basis for $R/I$~\cite[Section~5.3]{cox2015iva}.  Thus it suffices to show that $x^Ay^Bz^C\not\in\Initial(J_1:J(\tau)+J_2:J(\tau))$ if and only if $A,B,C$ satisfy the claimed inequalities.  Since $\Initial(J_1:J(\tau)+J_2:J(\tau))=\Initial(J_1:J(\tau))+\Initial(J_2:J(\tau))$ by~\eqref{eq:sum_of_initials}, it suffices to show that $x^Ay^Bz^C\notin \Initial(J_1:J(\tau))$ and $x^Ay^Bz^C\notin \Initial(J_2:J(\tau))$ if and only if the claimed inequalities hold.  Since the initial ideals are monomial, $x^Ay^Bz^C\notin \Initial(J_1:J(\tau))\iff x^Az^C\notin \Initial(J_1:J(\tau))$ and $x^Ay^Bz^C\notin \Initial(J_2:J(\tau))\iff y^Bz^C\notin \Initial(J_2:J(\tau))$.  Thus we reduce in both cases to two variables, and the result now follows from Corollary~\ref{cor:colontwovar}.
\end{proof}

\begin{Example}
Let $\Delta$ be the triangulation in Figure~\ref{fig:s3t4}, with $p=6, s=3, q=5, $ and $t=4$.  When $r=8$, a basis for $R/(J_1:J(\tau)+J_2:J(\tau))$ as an $\RR$-vector space is given by the monomials $x^Ay^Bz^C$ which satisfy $A\ge 0, B\ge 0,C\ge 0, 3A+2C\le 6,$ and $4B+3C\le 5$.  The lattice points $(A,B,C)\in\ZZ^3$ satisfying these inequalities are shown in Figure~\ref{fig:3Dpolytope}.  When $A+B+C=3$, there is a single lattice point -- $(2,1,0)$ -- that satisfies these inequalites (see the plot at right in Figure~\ref{fig:3Dpolytope}).  Thus $\dim (R/(J_1:J(\tau)+J_2:J(\tau)))_3=1$ and $\dim H_1(R_\bullet/J_\bullet)_{r+1+3}=\dim H_1(R_\bullet/J_\bullet)_{12}=1$.
\begin{figure}
\includegraphics[scale=.3]{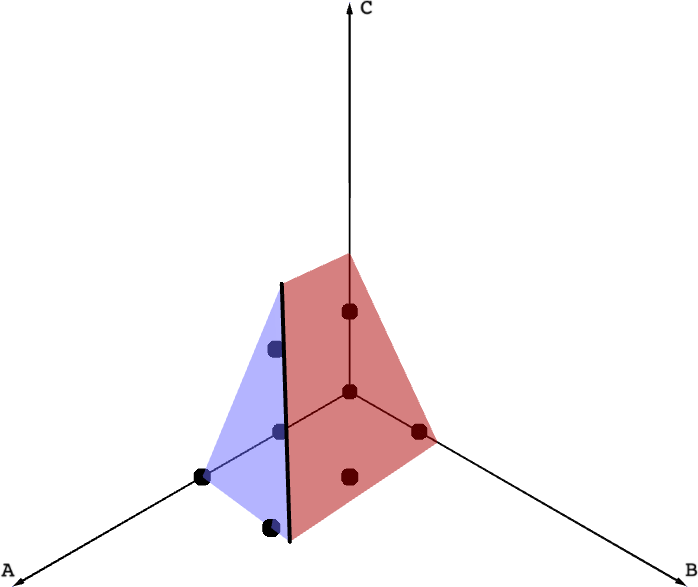}
\includegraphics[scale=.3]{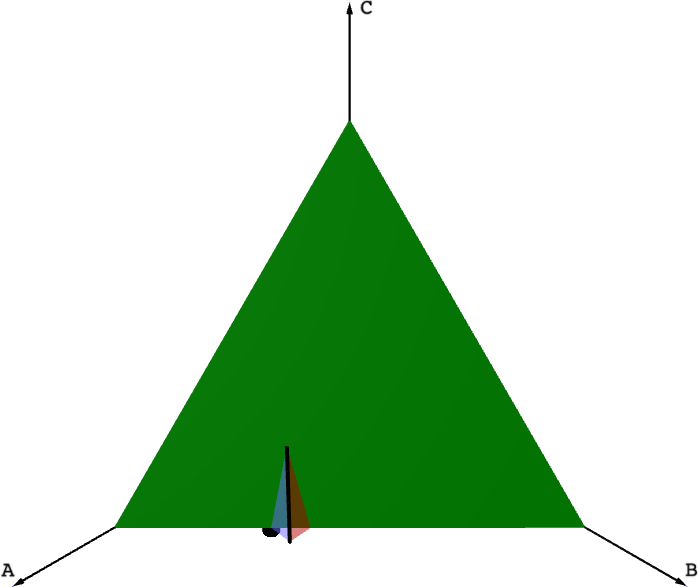}
\caption{Exponent vectors of monomials outside $R/(J_1:J(\tau)+J_2:J(\tau))$ when $p=6,s=3,q=5,t=4, $ and $r=8$.  The blue shaded plane (parallel to the $B$-axis) has equation $3A+2C=6$, while the red shaded plane (parallel to the $A$-axis) has equation $4B+3C=5$.  The picture at right shows the slice of this polytope when $A+B+C=3$. 
 There is one lattice point of the polytope in this slice.}\label{fig:3Dpolytope}
\end{figure}
\end{Example}

\begin{Proposition}\label{prop:2Dpolygon}
The dimension of $H_1(\calR_{\bullet}/J_\bullet)$ in degree $d$ is given by the number of lattice points $(A,B)\in\ZZ^2$ satisfying the inequalities $A\ge 0, B\ge 0$, $A-B(s-1)\le sr-d(s-1)$, $B-A(t-1)\le tr-d(t-1)$, and $A+B\le d-r-1$.
\end{Proposition}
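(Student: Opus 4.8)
The plan is to combine the graded isomorphism of Lemma~\ref{lem:presentation_H0J} with the explicit monomial basis furnished by Lemma~\ref{lem:quotientbasis}, and then to reduce from three variables to two by projecting along the hyperplane $H_d$. First I would invoke Lemma~\ref{lem:presentation_H0J}, which gives $H_{1}(\calR_{\bullet}/\calJ_{\bullet})\simeq R/(J_{1}:J(\tau)+J_{2}:J(\tau))(-r-1)$. Because of the degree shift by $-r-1$, the degree-$d$ graded piece of $H_1(\calR_\bullet/\calJ_\bullet)$ has the same dimension as the degree-$(d-r-1)$ graded piece of the quotient $R/(J_1:J(\tau)+J_2:J(\tau))$.

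Next I would apply Lemma~\ref{lem:quotientbasis}, which asserts that the monomials $x^Ay^Bz^C$ with $A,B,C\ge 0$, $sA+(s-1)C\le r+1-s$, and $tB+(t-1)C\le r+1-t$ form an $\RR$-basis for this quotient. Intersecting with the degree-$(d-r-1)$ piece imposes the additional equation $A+B+C=d-r-1$, so that $\dim H_1(\calR_\bullet/\calJ_\bullet)_d$ equals the number of lattice points $(A,B,C)\in\ZZ^3$ lying on the plane $A+B+C=d-r-1$ and satisfying the two displayed inequalities; this is precisely $\#(\calP\cap\ZZ^3\cap H_d)$ in the notation of Theorem~\ref{prop:nontrivialcases}.

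The final step is an affine change of variables. On the plane $A+B+C=d-r-1$ the third coordinate is determined by $C=d-r-1-A-B$, so projection onto the first two coordinates is a bijection from the relevant lattice points in $\ZZ^3$ onto lattice points $(A,B)\in\ZZ^2$. Substituting $C=d-r-1-A-B$ into the inequalities, the constraint $C\ge 0$ becomes $A+B\le d-r-1$; the constraint $sA+(s-1)C\le r+1-s$ becomes, after collecting terms, $A-(s-1)B\le sr-(s-1)d$; and symmetrically $tB+(t-1)C\le r+1-t$ becomes $B-(t-1)A\le tr-(t-1)d$. Together with $A,B\ge 0$ these are exactly the inequalities defining the polygon $\calP_d$ of Proposition~\ref{prop:2Dpolygon}, which completes the count.

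The only work here is the elementary linear algebra of those three substitutions, and this is the sole (and quite minor) obstacle; the structural content has already been established in Lemmas~\ref{lem:presentation_H0J} and~\ref{lem:quotientbasis}. I would record the simplification of $sA+(s-1)C\le r+1-s$ in one line, noting that the right-hand side becomes $r+1-s-(s-1)(d-r-1)=s(r+1)-s-(s-1)d=sr-(s-1)d$, and remark that the $t$-inequality follows identically by interchanging the roles of the two interior vertices.
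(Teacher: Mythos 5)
Your proposal is correct and follows exactly the same route as the paper's proof: invoke the graded isomorphism of Lemma~\ref{lem:presentation_H0J} to shift to degree $d-r-1$ of the quotient ring, use the monomial basis of Lemma~\ref{lem:quotientbasis}, and then eliminate $C$ via the substitution $C=d-r-1-A-B$. Your explicit one-line verification that $r+1-s-(s-1)(d-r-1)=sr-(s-1)d$ is the same simplification the paper leaves to the reader, so there is nothing to add or correct.
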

\begin{proof}
The graded isomorphism~\eqref{eq:H0Presentation} shows that the dimension of $H_1(\calR_{\bullet}/J_\bullet)$ in degree $d$ is equal to the dimension of $R/(J_1:J(\tau)+J_2:J(\tau))$ in degree $d-r-1$.  From Lemma~\ref{lem:quotientbasis}, this is the number of lattice points $(A,B,C)\in\ZZ_{\ge 0}^3$ satisfying $A+B+C=d-r-1$, $A\ge 0$, $B\ge 0$, $C\ge 0$, $sA+(s-1)C\le r-s+1$, and $tB+(t-1)C\le r-t+1$.  We get the result by substituting $C=d-r-1-A-B$ and simplifying the ensuing inequalities.
\end{proof}

%End add by Michael 10/07
If $M$ is a graded module of finite length, recall that the \textit{(Castelnuovo-Mumford) regularity} of $M$, written $\reg M$, is defined by $\reg M:=\max\{d\mid M_d\neq 0\}$.

\begin{Proposition}\label{prop:bounds_H0J}
The regularity of $H_{1}(\calR_{\bullet}/\calJ_{\bullet})$ is bounded by 
\begin{equation}\label{eqn:bound_H0J}
 \left\lfloor\frac{r+1}{s}\right\rfloor+\left\lfloor\frac{r+1}{t}\right\rfloor+r-1\leq \reg H_1(\calR_{\bullet}/\calJ_{\bullet})\leq\left\lfloor \frac{r+1}{s}+\frac{r+1}{t}\right\rfloor+r-1.
\end{equation}
More precisely,
\begin{equation}\label{eqn:exact_H0J}
\reg H_1(\calR_\bullet/\calJ_\bullet)=
\begin{cases}
\left\lfloor\frac{r+1}{s}\right\rfloor+\left\lfloor\frac{r+1}{t}\right\rfloor+r & \mbox{ if }r+1\equiv s-1\mod s \mbox{ and } r+1\equiv t-1\mod t\\
\left\lfloor\frac{r+1}{s}\right\rfloor+\left\lfloor\frac{r+1}{t}\right\rfloor+r-1 & \mbox{otherwise}.
\end{cases}
\end{equation}
Thus $\dim C^r_d(\Delta)=L(\Delta,r,d)$ for $d> \frac{r+1}{s}+\frac{r+1}{t}+r-1$, where $L(\Delta,r,d)$ is Schumaker's lower bound~\cite{schumaker1979dimension}.
% if $r+1\equiv s-1\mod s$ and $r+1\equiv t-1\mod t$, then 
%     \begin{equation*}
%         \reg H_{1}(\calR_{\bullet}/\calJ_{\bullet})=
%         \left\lfloor \frac{r+1}{s}+\frac{r+1}{t}\right\rfloor+r-1,
%     \end{equation*}
%     otherwise,
%     \begin{equation*}
%         \reg H_{1}(\calR_{\bullet}/\calJ_{\bullet})=\left\lfloor\frac{r+1}{s}\right\rfloor+\left\lfloor\frac{r+1}{t}\right\rfloor+r-1.
%     \end{equation*}
\end{Proposition}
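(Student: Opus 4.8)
The plan is to convert the regularity computation into a purely combinatorial optimization over lattice points and then to locate the extreme monomial explicitly. By the graded isomorphism \eqref{eq:H0Presentation} of Lemma~\ref{lem:presentation_H0J}, $H_1(\calR_\bullet/\calJ_\bullet)$ is the shift by $r+1$ of $N:=R/(J_1:J(\tau)+J_2:J(\tau))$, so $\reg H_1(\calR_\bullet/\calJ_\bullet) = (r+1)+\reg N$. First I would invoke the monomial basis of Lemma~\ref{lem:quotientbasis}: since $N$ has finite length, $\reg N$ equals the largest total degree $A+B+C$ attained by a triple of nonnegative integers satisfying $sA+(s-1)C\le r+1-s$ and $tB+(t-1)C\le r+1-t$. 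Writing $n=r+1$, this is an integer program which I would solve by slicing on the value of $C$.

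For each fixed $C$ the two constraints decouple, so the optimal $A$ and $B$ are simply the largest admissible integers, $\lfloor(n-s-(s-1)C)/s\rfloor$ and $\lfloor(n-t-(t-1)C)/t\rfloor$. A short manipulation rewrites the resulting objective as $g(C)=\lfloor(n+C)/s\rfloor+\lfloor(n+C)/t\rfloor-C-2$, valid over the range of $C$ for which both optima are nonnegative. Thus $\reg N=\max_C g(C)$, and everything reduces to understanding how $g$ behaves as $C$ grows.

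The heart of the argument is to show that $g$ essentially peaks immediately. I would study the increment $\delta(C):=g(C+1)-g(C)=\mathbb{1}[s\mid n+C+1]+\mathbb{1}[t\mid n+C+1]-1\in\{-1,0,1\}$, which equals $+1$ exactly when $\operatorname{lcm}(s,t)\mid n+C+1$. Telescoping gives $\phi(C):=g(C)-g(0)=\sum_{k=0}^{C-1}\delta(k)$, and counting residues yields the clean bound $\phi(C)\le C(\tfrac1s+\tfrac1t-1)-\tfrac{\rho_s}{s}-\tfrac{\rho_t}{t}+2$, where $\rho_s\in\{1,\dots,s\}$ is the residue of $-n$ modulo $s$ and $\rho_t$ is defined analogously. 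Since $s,t\ge 2$ forces $\tfrac1s+\tfrac1t-1\le 0$, this bound gives $\phi(C)\le 1$ for all $C$, and the slack pins down the maximizer: $\max_C\phi=1$ precisely when $\rho_s=\rho_t=1$ (attained at $C=1$), i.e. when $s\mid n+1$ and $t\mid n+1$, which is exactly the congruence condition $r+1\equiv s-1\bmod s$ and $r+1\equiv t-1\bmod t$; otherwise $\max_C\phi=0$, attained at $C=0$.

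The main obstacle is the boundary case $s=t=2$, where $\tfrac1s+\tfrac1t-1=0$: the linear term in the bound vanishes and the inequality no longer forces the maximizer to $C\le 1$ by itself. Here I would argue separately, either reading off the residues directly (the two counting functions coincide) or noting that $g$ becomes exactly periodic so that its successive peaks never exceed $g(0)+1$. I also need to confirm that the relevant maximizers $C\in\{0,1\}$ lie in the admissible range, i.e. that $A,B\ge 0$ there; this follows from $s\le t\le r+1$ together with the divisibilities $s\mid n+1$, $t\mid n+1$ in the special case, which force $n+1\ge 2s$ and $n+1\ge 2t$. Assembling the pieces, $\reg N=g(0)+\max_C\phi=\lfloor\tfrac{r+1}{s}\rfloor+\lfloor\tfrac{r+1}{t}\rfloor-2+\max_C\phi$, and adding $r+1$ yields \eqref{eqn:exact_H0J}. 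The two-sided estimate \eqref{eqn:bound_H0J} then drops out: the lower bound is immediate, while the upper bound follows from $\lfloor a\rfloor+\lfloor b\rfloor\le\lfloor a+b\rfloor$ together with the observation that in the special case the fractional parts of $(r+1)/s$ and $(r+1)/t$ are $(s-1)/s$ and $(t-1)/t$, which sum to at least $1$, so $\lfloor\tfrac{r+1}{s}+\tfrac{r+1}{t}\rfloor=\lfloor\tfrac{r+1}{s}\rfloor+\lfloor\tfrac{r+1}{t}\rfloor+1$. Finally, since $H_1(\calR_\bullet/\calJ_\bullet)$ vanishes in degrees exceeding its regularity and $\reg H_1(\calR_\bullet/\calJ_\bullet)\le\lfloor\tfrac{r+1}{s}+\tfrac{r+1}{t}\rfloor+r-1\le\tfrac{r+1}{s}+\tfrac{r+1}{t}+r-1$, Theorem~\ref{thm:splinedimension} gives $\dim C^r_d(\Delta)=L(\Delta,r,d)$ for every integer $d>\tfrac{r+1}{s}+\tfrac{r+1}{t}+r-1$.
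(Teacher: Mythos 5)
Your proof is correct, and it takes a genuinely different route from the paper's. Both arguments start the same way: Lemma~\ref{lem:presentation_H0J} and the monomial basis of Lemma~\ref{lem:quotientbasis} reduce the regularity to maximizing $A+B+C$ over the lattice points of $\calP$. But the paper then argues geometrically in three separate steps: the lower bound in \eqref{eqn:bound_H0J} by exhibiting the monomial with $(A,B,C)=(\lfloor(r+1)/s\rfloor-1,\lfloor(r+1)/t\rfloor-1,0)$; the upper bound by linear programming, enumerating all six vertices of $\calP$ and proving the comparison inequality \eqref{eq:ineq_two_vertices_of_polytope}; and the dichotomy \eqref{eqn:exact_H0J} by invoking Lemma~\ref{lem:boundC}, a separate lattice-point analysis of the critical hyperplane which forces $C_0=1$ when $t\ge 3$ and treats $s=t=2$ by hand. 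You instead slice on $C$ and maximize the one-variable floor expression $g(C)=\lfloor(n+C)/s\rfloor+\lfloor(n+C)/t\rfloor-C-2$ (with $n=r+1$; I checked this rewriting), controlling it by telescoped increments and the residue bound $\phi(C)\le C\bigl(\tfrac1s+\tfrac1t-1\bigr)-\tfrac{\rho_s}{s}-\tfrac{\rho_t}{t}+2$; this single arithmetic computation delivers the lower bound, the upper bound, and the exact dichotomy all at once, and it explains the jump arithmetically: $\delta(C)=+1$ exactly when $\mathrm{lcm}(s,t)\mid n+C+1$. Your admissibility checks ($C=0$ is always feasible since $t\le r+1$; $C=1$ is feasible in the special case since $s\mid n+1$ and $t\mid n+1$ force $n+1\ge 2s$ and $n+1\ge 2t$) and the closing floor-sum identity for the upper bound are also right. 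Two small remarks. First, your caution about $s=t=2$ is unnecessary: your own bound still forces $\rho_s=\rho_t=1$ whenever $\phi(C)=1$ there, since $1\le 2-(\rho_s+\rho_t)/2$ gives $\rho_s+\rho_t\le 2$; what degenerates is only the uniqueness of the maximizer, which your argument never uses (you only need the maximum value and that it is attained at some admissible $C$, e.g.\ $C=1$). Second, the trade-off relative to the paper: the vertex enumeration you avoid is not wasted work in context, since the proof of Corollary~\ref{cor:crosscut} later reuses \eqref{eq:ineq_two_vertices_of_polytope} and the vertices $Q_1,Q_2,Q_3$, whereas your cleaner argument produces no such byproducts.
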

To prove Proposition \ref{prop:bounds_H0J}, we use the following lemma:
\begin{Lemma}\label{lem:boundC}
Assume $2\le s\le t\le r+1$.  Let $\calP$ be the polytope in $\RR^3$ defined by the inequalities $A\ge 0, B\ge 0, C\ge 0, r+1-s\ge sA+(s-1)C,$ and $r+1-t\ge tB+(t-1)C$. Let $H$ be the plane defined by $A+B+C=\left\lfloor\frac{r+1}{s}\right\rfloor+\left\lfloor\frac{r+1}{t}\right\rfloor-1$.  Then $\calP\cap H\cap \ZZ^3\neq \emptyset$ if and only if $r+1\equiv s-1\mod s$ and $r+1\equiv t-1\mod t$.  Moreover, if $r+1\equiv s-1\mod s$ and $r+1\equiv t-1\mod t$ then
\begin{itemize}
\item If $t\ge 3$ then $\calP\cap H\cap \ZZ^3=\{(\left\lfloor\frac{r+1}{s}\right\rfloor-1,\left\lfloor\frac{r+1}{t}\right\rfloor-1, 1)\}$
\item If $s=t=2$ then $\calP\cap H\cap \ZZ^3=\{(t,t,r-1-2t)\mid t=0,1,\ldots, r/2-1\}$
\end{itemize}

% Assume $s\geq 2$, $t\ge 3$ and $r+1\geq t\geq s$. Let $\overline{\mathcal{P}}$ be the polytope in $\RR^3$ defined by the inequalities $A\ge 0, B\ge 0, C\ge 0, r+1-s\ge sA+(s-1)C,$ and $r+1-t\ge tB+(t-1)C$. Let $H$ be the plane defined by $A+B+C=\left\lfloor\frac{r+1}{s}\right\rfloor+\left\lfloor\frac{r+1}{t}\right\rfloor-1$. If $\overline{\mathcal{P}}\cap H\cap \ZZ^3\neq \emptyset$, then $r+1\equiv s-1\mod s$, $r+1\equiv t-1\mod t$, and $(\left\lfloor\frac{r+1}{s}\right\rfloor-1,\left\lfloor\frac{r+1}{t}\right\rfloor-1, 1)$ is the only point in $\overline{\mathcal{P}}\cap H\cap \ZZ^3$.  If $s=t=2$ and $\overline{\mathcal{P}}\cap H\neq \emptyset$ then $r$ is even and $\overline{\mathcal{P}}\cap H=\{(t,t,r-1-2t)\mid t=0,1,\ldots, r/2-1\}$.
\end{Lemma}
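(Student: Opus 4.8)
The plan is to translate the statement into a lattice-point counting problem and analyze it by fixing the $C$-coordinate. Writing $r+1 = s\lfloor\frac{r+1}{s}\rfloor + \rho_s$ and $r+1 = t\lfloor\frac{r+1}{t}\rfloor + \rho_t$ with $0\le\rho_s<s$ and $0\le\rho_t<t$, I first note that $\rho_s = s-1$ (resp. $\rho_t = t-1$) is exactly the congruence $r+1\equiv s-1 \pmod s$ (resp. mod $t$). A point $(A,B,C)\in\ZZ^3$ lies in $\calP\cap H$ precisely when $A,B,C\ge0$, the sum $A+B+C$ equals $\lfloor\frac{r+1}{s}\rfloor + \lfloor\frac{r+1}{t}\rfloor - 1$, and the two defining inequalities hold. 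For a fixed $C$, the inequality $sA+(s-1)C\le r+1-s$ caps $A$ at $A_{\max}(C):=\lfloor\frac{r+1-s-(s-1)C}{s}\rfloor$, which a short computation rewrites as $\lfloor\frac{r+1}{s}\rfloor - 1 - C + \lfloor\frac{\rho_s+C}{s}\rfloor$; symmetrically the second inequality caps $B$ at an analogous $B_{\max}(C)$.

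Next I reduce the existence of a lattice point with a given $C$ to a single clean inequality. On $H$ I need $A+B = \lfloor\frac{r+1}{s}\rfloor+\lfloor\frac{r+1}{t}\rfloor-1-C$ with $0\le A\le A_{\max}(C)$ and $0\le B\le B_{\max}(C)$; such $A,B$ exist iff the caps are nonnegative and their sum is at least the (nonnegative) target sum, which I will check is automatic in the relevant range. Substituting the closed forms, this collapses to the key inequality $\lfloor\frac{\rho_s+C}{s}\rfloor + \lfloor\frac{\rho_t+C}{t}\rfloor \ge C+1$. The whole problem becomes determining for which $C\ge0$ this holds, and how many pairs $(A,B)$ it then admits.

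The crux is a bounding argument: applying $\lfloor x\rfloor\le x$ to the left side gives $\lfloor\frac{\rho_s+C}{s}\rfloor + \lfloor\frac{\rho_t+C}{t}\rfloor \le 2-\frac1s-\frac1t + C(\frac1s+\frac1t)$, and comparing with $C+1$ yields $(1-\frac1s-\frac1t)(1-C)\ge0$. Since $s,t\ge2$, the coefficient $1-\frac1s-\frac1t$ is nonnegative, and this is where the two cases split. When $t\ge3$ it is strictly positive, forcing $C\le1$; then $C=0$ fails the key inequality (both floors vanish) while $C=1$ forces $\lfloor\frac{\rho_s+1}{s}\rfloor=\lfloor\frac{\rho_t+1}{t}\rfloor=1$, i.e. $\rho_s=s-1$ and $\rho_t=t-1$. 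In that case the target sum equals $A_{\max}(1)+B_{\max}(1)$, pinning down the unique point $(\lfloor\frac{r+1}{s}\rfloor-1, \lfloor\frac{r+1}{t}\rfloor-1, 1)$, which simultaneously establishes existence, uniqueness, and the explicit coordinates.

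The main obstacle I anticipate is the degenerate case $s=t=2$, where $1-\frac1s-\frac1t=0$ and the bounding trick gives no constraint on $C$; here I will analyze the key inequality directly by parity. With $\rho_s,\rho_t\in\{0,1\}$, the inequality $\lfloor\frac{\rho_s+C}{2}\rfloor+\lfloor\frac{\rho_t+C}{2}\rfloor\ge C+1$ holds iff $\rho_s=\rho_t=1$ (again the congruence condition) and $C$ is odd, and for each such $C$ the forced equality gives $A=B=A_{\max}(C)=\frac{r-1-C}{2}$; reindexing by $k=\frac{r-1-C}{2}$ recovers the infinite family $\{(k,k,r-1-2k):0\le k\le r/2-1\}$ (written with the clashing loop variable $t$ in the lemma statement). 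Assembling the two cases yields both the claimed equivalence and the explicit point sets, so the only genuine care needed is in the closed-form computation of $A_{\max}(C)$, verifying that the nonnegativity constraints on the caps are never binding in the ranges that arise, and keeping the degenerate $s=t=2$ parity bookkeeping straight.
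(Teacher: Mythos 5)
Your proposal is correct and follows essentially the same route as the paper: both arguments slice the polytope by the value of $C$, and your key inequality $\left\lfloor\frac{\rho_s+C}{s}\right\rfloor+\left\lfloor\frac{\rho_t+C}{t}\right\rfloor\ge C+1$, after bounding the floors by their arguments, is exactly the paper's inequality $\left(1-\frac{1}{s}-\frac{1}{t}\right)C\le\left\{\frac{r+1}{s}\right\}+\left\{\frac{r+1}{t}\right\}-1$ (since $\left\{\frac{r+1}{s}\right\}=\frac{\rho_s}{s}$), which forces $C\le 1$ when $t\ge 3$ and degenerates when $s=t=2$, where both proofs fall back on a parity analysis. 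The only difference is organizational: your caps $A_{\max}(C),B_{\max}(C)$ package existence and enumeration into one uniform counting criterion, whereas the paper first rules out $C_0=0$ by contradiction, then proves $C_0\le 1$ by eliminating $B_0$, and then pins down the unique point.
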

\begin{proof}
We first treat the case $t\ge 3$ and $s\ge 2$.  Assume $P=(A_{0},B_{0},C_{0})\in \calP\cap H\cap\ZZ^{3}$. If $C_{0}=0$, then $A_{0}\leq \left\lfloor\frac{r+1}{s}\right\rfloor-1$ and $B_{0}\leq \left\lfloor\frac{r+1}{t}\right\rfloor-1$. Hence, $(A_{0},B_{0},0)\not\in H$, contradiction. Therefore, $C_{0}\geq 1$.

Next, we show that $C_{0}\leq 1$. Let $d_0=\left\lfloor\frac{r+1}{s}\right\rfloor+\left\lfloor\frac{r+1}{t}\right\rfloor-1$. Substituting $A_{0}=d_{0}-B_{0}-C_{0}$ to $sA_{0}+(s-1)C_{0}\leq r+1-s$, we know that $(B_{0},C_{0})\in \ZZ^{2}_{\geq 0}$ must satisfy $sB_{0}+C_{0}\geq s+sd_0-(r+1)$ and $tB_{0}+(t-1)C_{0}\leq r+1-t$. Eliminating $B_{0}$ and simplifying, we obtain 
\begin{equation*}
    \left(1-\frac{1}{s}-\frac{1}{t}\right)C_{0}\leq \left\{\frac{r+1}{s}\right\}+\left\{\frac{r+1}{t}\right\}-1.
\end{equation*}
where $\left\{\frac{r+1}{s}\right\}=\frac{r+1}{s}-\left\lfloor\frac{r+1}{s}\right\rfloor$ and $\left\{\frac{r+1}{t}\right\}=\frac{r+1}{t}-\left\lfloor\frac{r+1}{t}\right\rfloor$. Because $\left\{\frac{r+1}{s}\right\}\leq 1-\frac{1}{s}$ and $\left\{\frac{r+1}{t}\right\}\leq 1-\frac{1}{t}$, so $\left(1-\frac{1}{s}-\frac{1}{t}\right)C_{0}\leq 1-\frac{1}{s}-\frac{1}{t}$. Since $s\geq 2$ and $t\geq 3$, this implies $C_{0}\leq 1$.

Therefore, $C_{0}=1$, and hence $A_{0}\leq \left\lfloor\frac{r+2}{s}\right\rfloor-2\leq \left\lfloor\frac{r+1}{s}\right\rfloor-1$ and $B_{0}\leq \left\lfloor\frac{r+2}{t}\right\rfloor-2\leq \left\lfloor\frac{r+1}{t}\right\rfloor-1$.  Observe that if $\left\lfloor\frac{r+2}{s}\right\rfloor-2<\left\lfloor\frac{r+1}{s}\right\rfloor-1$ or $\left\lfloor\frac{r+2}{t}\right\rfloor-2<\left\lfloor\frac{r+1}{t}\right\rfloor-1$ then $H\cap\calP\cap\ZZ^3=\emptyset$.  Therefore if $H\cap\calP\cap\ZZ^3\neq\emptyset$ then $\left\lfloor\frac{r+2}{s}\right\rfloor=\lfloor \frac{r+1}{s}\rfloor+1$ and $\left\lfloor\frac{r+2}{t}\right\rfloor=\lfloor \frac{r+1}{t}\rfloor+1$ which in turn happens if and only if $r+1\equiv s-1\mod s$ and $r+1\equiv t-1\mod t$.  In case both congruences are satisfied, it is clear from the above reasoning that $(A_{0},B_{0},C_{0})=(\left\lfloor\frac{r+1}{s}\right\rfloor-1,\left\lfloor\frac{r+1}{t}\right\rfloor-1, 1)$ is the only point in $\calP\cap H\cap \ZZ^{3}$.

Now we treat the case $s=t=2$.  First suppose $r$ is odd, so $r=2k-1$ for some integer $k\ge 1$.  Then $d_0=2k-1$ and the polytope $\calP$ is defined by $A\ge 0$, $B\ge 0$, $C\ge 0$, $2A+B\le 2k-2$, and $2B+C\le 2k-2$.  From the final two inequalities we deduce that $A+B+C\le 2k-2$ and thus $H\cap\calP$ is empty.  Now suppose $r=2k$ for some integer $k\ge 1$.  Then $d_0=2k-1$ again, and $\calP$ is defined by the inequalities $A\ge 0$, $B\ge 0$, $C\ge 0$, $2A+B\le 2k-1$, and $2B+C\le 2k-1$.  From $A+B+C=2k-1$, $2A+B\le 2k-1$, and $2B+C\le 2k-1$, we deduce that $A=B$.  Since $A\ge 0$ and $B\ge 0$, we deduce that $H\cap\mathcal{P}\cap\ZZ^3=\{(t,t,r-1-2t\mid t=0,\ldots,r/2-1\}$.
\end{proof}
Now we are ready to prove Proposition \ref{prop:bounds_H0J}.
\begin{proof}[Proof of Proposition \ref{prop:bounds_H0J}]
We first prove the bounds~\eqref{eqn:bound_H0J}.  By Lemma~\ref{lem:presentation_H0J}, the regularity of $H_1(\calR_{\bullet}/\calJ_{\bullet})$ is the largest degree of a monomial in $R/(J_1:J(\tau)+J_2:J(\tau))(-r-1)$.  By Lemma \ref{lem:quotientbasis}, $x^{A}y^{B}\in R/(J_{1}:J(\tau)+J_{2}:J(\tau))$ if $r+1\geq s(A+1)$ and $r+1\geq t(B+1)$. In particular, $A=\left\lfloor\frac{r+1}{s}\right\rfloor-1$ and $B=\left\lfloor\frac{r+1}{t}\right\rfloor-1$ satisfy this condition. By Lemma \ref{lem:presentation_H0J}, we obtain 
\begin{equation*}
     \left\lfloor\frac{r+1}{s}\right\rfloor+\left\lfloor\frac{r+1}{t}\right\rfloor+r-1\leq \reg H_1(\calR_{\bullet}/\calJ_{\bullet}).
\end{equation*}
Note that the region of $\RR^{3}$ bounded by inequalities in Lemma \ref{lem:quotientbasis} is a polytope, which we denote by $\calP$ as in Lemma~\ref{lem:boundC}.  Thus the largest degree of a monomial in $R/(J_1:J(\tau)+J_2:J(\tau))$ is obtained by maximizing the linear functional $A+B+C$ over $\calP\cap\ZZ^3$.  It is well-known in linear programming that the maximum of this linear functional on $\calP\cap\RR^3$ occurs at one of the vertices of $\calP$.  Therefore, to prove the upper bound, it suffices to verify that evaluating $A+B+C$ at the vertices achieves a value of at most $\frac{r+1}{s}+\frac{r+1}{t}-2$. The vertices of $\calP$ have coordinates
\begin{align*}
    \left(\dfrac{t-s}{s(t-1)}r,0,\dfrac{r}{t-1}-1\right),~
    \left(\dfrac{r+1}{s}-1,\dfrac{r+1}{t}-1,0\right),\\
    \left(0,0,\dfrac{r}{t-1}-1\right),~
    \left(\dfrac{r+1}{s}-1,0,0\right),~
    \left(0,\dfrac{r+1}{t}-1,0\right),~
    (0,0,0),
\end{align*}
respectively. Computing $A+B+C$ for each of them, we have
\begin{align*}
    \frac{t}{s(t-1)}r-1, ~\frac{r+1}{s}+\frac{r+1}{t}-2,\\
    \frac{r}{t-1}-1,~\frac{r+1}{s}-1,~\frac{r+1}{t}-1,~0,
\end{align*}
respectively. We want to show that
$\frac{r+1}{s}+\frac{r+1}{t}-2$ is the largest among all of them.\\
It is clear that
\begin{equation*}
    0\leq \frac{r+1}{t}-1\leq\frac{r+1}{s}-1\leq \frac{r+1}{s}+\frac{r+1}{t}-2,
\end{equation*}
and that
\begin{equation*}
    \frac{r}{t-1}-1\leq \frac{t}{s(t-1)}r-1.
\end{equation*}
We only need to show that
\begin{equation}\label{eq:ineq_two_vertices_of_polytope}
    \frac{t}{s(t-1)}r-1\leq\frac{r+1}{s}+\frac{r+1}{t}-2.
\end{equation} 
We have
\begin{equation*}
    \frac{t}{s(t-1)}r-1-\left[\frac{r+1}{s}+\frac{r+1}{t}-2\right]=\left[\frac{1-(t-1)(s-1)}{t(t-1)s}\right]r+1-\frac{1}{s}-\frac{1}{t}.
\end{equation*}
Because $t\geq s\geq 2$, so $1-(t-1)(s-1)\leq 0$, where equality holds if and only if $t=s=2$. If $t=s=2$, then equality in \eqref{eq:ineq_two_vertices_of_polytope} holds. Otherwise, $1-(t-1)(s-1)< 0$ and
\begin{align*}
    \left[\frac{1-(t-1)(s-1)}{t(t-1)s}\right]r+1-\frac{1}{s}-\frac{1}{t}&\leq \left[\frac{1-(t-1)(s-1)}{t(t-1)s}\right](t-1)+1-\frac{1}{s}-\frac{1}{t}\\
    &=0.
\end{align*}
Thus, we have proved \eqref{eq:ineq_two_vertices_of_polytope}. %Note that
% \begin{equation}
%     \left\lfloor\frac{r+1}{s-1}+\frac{r+1}{t-1}-2\right\rfloor\leq \left\lfloor\frac{r+1}{s-1}\right\rfloor+\left\lfloor\frac{r+1}{t-1}\right\rfloor-1.
% \end{equation}
This means 
\begin{equation*}
    \reg H_1(\calR_{\bullet}/\calJ_{\bullet})\leq\left\lfloor\frac{r+1}{s}+\frac{r+1}{t}\right\rfloor+r-1.
\end{equation*}
Therefore, the inequality \eqref{eqn:bound_H0J} holds.

Now we prove Equation~\eqref{eqn:exact_H0J}.  If $\reg H_{1}(\calR_{\bullet}/\calJ_{\bullet})\neq\left\lfloor\frac{r+1}{s}\right\rfloor+\left\lfloor\frac{r+1}{t}\right\rfloor+r-1$, then by \eqref{eqn:bound_H0J}, $\reg H_{1}(\calR_{\bullet}/\calJ_{\bullet})=\left\lfloor\frac{r+1}{s}\right\rfloor+\left\lfloor\frac{r+1}{t}\right\rfloor+r$. By Lemma \ref{lem:presentation_H0J} and Lemma \ref{lem:quotientbasis}, this is equivalent to saying that $\calP\cap H\cap \ZZ^3\neq \emptyset$, where $\calP$ and $H$ are as in the setup of Lemma \ref{lem:boundC}.  Applying Lemma~\ref{lem:boundC} completes the proof. \qedhere

\end{proof}
% \begin{Remark}
% In case there are three slopes that meet at each endpoint of the interior edge $\tau$ (so $s=t=2$), Proposition~\ref{prop:bounds_H0J} yields that $\reg H_1(\calR_\bullet/\calJ_\bullet)=2r$, which recovers the main result of Toh\v{a}neanu and Min\'{a}\v{c} in~\cite{Minac-Tohaneanu-2013}.
% \end{Remark}
%It is possible for the actual value of $\reg H_{1}(R_{\bullet}/J_{\bullet})$ to reach either the lower or upper bound in \eqref{eqn:bound_H0J}.
\begin{Example}
Assume that $(s,t)=(3,4)$ and $r=6$. Then
\begin{align*}
\Initial(J_{1})=\langle x^{7}, x^{6}z, x^{5}z^2,x^{4}z^4,x^{3}z^5,x^2z^{7},xz^{8},z^{10}\rangle
\end{align*}
and
\begin{equation*}
\Initial(J_{2})=\langle y^{7}, y^{6}z, y^5z^2,y^4z^3,y^3z^{5},y^2z^{6},yz^{7}, z^{9}\rangle.
\end{equation*}
Therefore,
\begin{equation*}
\Initial (J_{1}\colon J(\tau)+J_{2}\colon J(\tau))=\langle x^2, xz, y,z^2\rangle.
\end{equation*}
Clearly every monomial of degree two or more is in $\langle x^2, xz, y,z^2\rangle$, but the monomial $x$ is not in this ideal.  Therefore $\reg R/\langle x^2, xz, y,z^2\rangle=1$ and so $\reg H_1(\calR_\bullet/\calJ_\bullet)=8$ by Lemma~\ref{lem:presentation_H0J}.
% The regularity of $R/\Initial (J_{1}\colon J(\tau)+J_{2}\colon J(\tau))$ can be measured by cellular resolution. In this case, we have
% \begin{equation*}
% 	\reg R/\Initial (J_{1}\colon J(\tau)+J_{2}\colon J(\tau))=\deg(x^2yz)-3=1.
% \end{equation*}
The bounds given by \eqref{eqn:bound_H0J} are
\begin{equation*}
    8\leq \reg H_{1}(\calR_{\bullet}/\calJ_{\bullet})\leq 9.
\end{equation*}
In this case, $r+1\not\equiv s-1\mod s$, so by Proposition \ref{prop:bounds_H0J}, $\reg H_{1}(\calR_{\bullet}/\calJ_{\bullet})=8$, which aligns with what we have found already.\\
On the other hand, if $(s,t)=(3,4)$ and $r=10$, then
\begin{align*}
\Initial(J_{1})=\langle x^{11}, x^{10}z, x^{9}z^2,x^{8}z^4,x^{7}z^5,\dots,x^4z^{10},x^3z^{11},x^2z^{13},xz^{14},z^{16}\rangle
\end{align*}
and
\begin{equation*}
\Initial(J_{2})=\langle y^{11}, y^{10}z, y^9z^2,y^8z^3,y^7z^5,\dots,y^4z^9,y^3z^{10},y^2z^{11},yz^{13}, z^{14}\rangle.
\end{equation*}
Therefore,
\begin{equation*}
\Initial (J_{1}\colon J(\tau)+J_{2}\colon J(\tau))=\langle x^3, x^2z^2, y^2,yz^2,z^3\rangle.
\end{equation*}
We can see by inspection that any monomial of degree five or more is in $\langle x^3, x^2z^2, y^2,yz^2,z^3\rangle$, while $x^2yz$ is a monomial of degree four not in this ideal.  Thus $\reg R/\Initial (J_{1}\colon J(\tau)+J_{2}\colon J(\tau))=4$ and so, by Lemma~\ref{lem:presentation_H0J}, $\reg H_1(\calR_\bullet/\calJ_\bullet)=15$.
% and
% \begin{equation*}
% 	\reg R/\Initial (J_{1}\colon J(\tau)+J_{2}\colon J(\tau))=\deg(x^3y^2z^2)-3=4.
% \end{equation*}
% Hence, $\reg H_{1}(\calR_{\bullet}/\calJ_{\bullet})=15$ by cellular resolution. 
In this case, \eqref{eqn:bound_H0J} specializes to
$14\leq \reg H_{1}(\calR_{\bullet}/\calJ_{\bullet})\leq 15$. Since $r+1\equiv s-1\mod s$ and $r+1\equiv t-1\mod t$, Proposition \ref{prop:bounds_H0J} yields $\reg H_{1}(\calR_{\bullet}/\calJ_{\bullet})=15$, which aligns with what we found by inspection.
%If $(s,t)=(2,2)$ and $r$ is even,  we know from \cite{tohaneanu2005smooth} that $\reg H_{1}(\calR_{\bullet}/\calJ_{\bullet})=2r$, which is predicited by the upper bound, since $2\cdot\left\lfloor\frac{r+1}{2}\right\rfloor+r=2r$.  We also work this out in Example~\ref{ex:s2t2}.
\end{Example}

%%%Michael June 25
\begin{proof}[Proof of Theorem~\ref{prop:nontrivialcases}]
The first equation in Theorem~\ref{prop:nontrivialcases} follows from Lemma~\ref{lem:quotientbasis}, Theorem~\ref{thm:splinedimension}, and \\
Lemma~\ref{lem:H0pres}.  The second equation follows from the first equation and Proposition~\ref{prop:2Dpolygon}.
\end{proof}

\begin{proof}[Proof of Theorem~\ref{thm:LBregularity}]
Theorem~\ref{thm:LBregularity} follows from Theorem~\ref{thm:splinedimension} and Proposition~\ref{prop:bounds_H0J}.
\end{proof}
%%%End Michael June 25

%%%%\input{sections/intrinsic_supersmoothness}
\section{Comparison to quasi-cross-cut}\label{sec:comparison}
In this section we address the phenomenon that, for certain pairs $(r,d)$, $\dim C^{r}_{d}(\Delta)=\dim C^{r}_{d}(\Delta')$ where $\Delta'$ is obtained by removing the unique totally interior edge from $\Delta$ to get a quasi-cross-cut partition (see Definition~\ref{def:quasi-cross-cut}), as shown in Figure~\ref{fig:intrinsic_supersmoothness}.
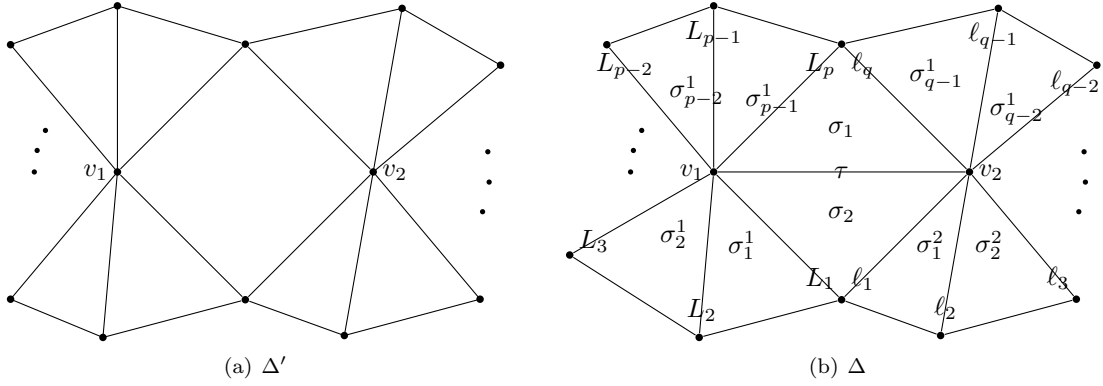
\begin{figure}
	\centering	
  \subfigure[$\Delta'$]{
\begin{tikzpicture}[scale=1.7]
\tikzstyle{dot}=[circle,fill=black,inner sep=1 pt];
\tikzstyle{smdot}=[circle,fill=black,inner sep=.7 pt];
\node[dot] (left) at (-1,0){};
\node[dot] (right) at (1,0){};
\node[dot] (up) at (0,1){};
\node[dot] (down) at (0,-1){};

\define \La {-95};
\define \Lb {-130};

\define \Lc {-230};
\define \Ld {-270};

\define \la {-100};
\define \lb {-50};

\define \lc {40};
\define \ld {80};

\define \rd {1.3};

\draw (right)node[right]{$v_2$}--(up)--(left)node[left]{$v_1$};
\draw (left)--(down)--(right);
\foreach \x in {\La,\Lb,\Lc,\Ld}
\draw (node cs:name=left)
+(0,0)--+(\x:\rd)node[dot]{};
\foreach \y in {\la,\lb,\lc,\ld}
\draw (node cs:name=right)
+(0,0)--+(\y:\rd)node[dot]{};

\foreach \source/\target in {\La/\Lb,\Lc/\Ld}
\draw (node cs:name=left) +(\source:\rd)--+(\target:\rd);
\foreach \source/\target in {\la/\lb,\lc/\ld}
\draw (node cs:name=right) +(\source:\rd)--+(\target:\rd);

\draw (-1,0)++(\Ld:\rd)--(up);
\draw (-1,0)++(\La:\rd)--(down);
\draw (1,0)++(\ld:\rd)--(up);
\draw (1,0)++(\la:\rd)--(down);

\foreach \x in {-180,-195,-210}
\draw(node cs: name=left) node[smdot] at +(\x:{.5*\rd}){};

\foreach \y in {-20,-5,10}
\draw(node cs: name=right) node[smdot] at +(\y:{.7*\rd}){};

\end{tikzpicture}

 %  \begin{tikzpicture}[scale=.8]
 %            \tikzstyle{vertex}=[circle, draw, scale = 0.25]
	% 	\tikzstyle{vertex1}=[circle, draw, scale = 0.25, label=above:$v_\x\text{=}\place$ ]
	% 	\tikzstyle{vertex2}=[circle, draw, scale = 0.25, label=below:$v_\x\text{=}\place$ ]
	% 	\tikzstyle{vertex3}=[circle, draw, scale = 0.25, label=right:$v_\x\text{=}\place$ ]
 %  % Physical layer nodes
 %  		\foreach \place/\x in {{(-4,2)/3}, {(4,2)/7}, {(0,2)/8}}
	% 	\node[vertex] (a\x) at \place {};
	% 	\foreach \place/\x in {{(-4,-2)/4}, {(0,-2)/5},{(4,-2)/6}}
	% 	\node[vertex] (a\x) at \place {};
	% 	\foreach \place/\x in {{(-2,0)/1},{(2,0)/2}}
	% 	\node[vertex] (a\x) at \place {};
	% 	% Physical layer links
	% 	\path[thick] (a1) edge (a3);
	% 	\path[thick] (a1) edge (a4);
	% 	\path[thick] (a1) edge (a5);
	% 	\path[thick] (a1) edge (a8);
	% 	\path[thick] (a2) edge (a5);
	% 	\path[thick] (a2) edge (a6);
	% 	\path[thick] (a2) edge (a7);
	% 	\path[thick] (a2) edge (a8);
	% 	\path[thick] (a3) edge (a4);
	% 	\path[thick] (a4) edge (a5);
	% 	\path[thick] (a5) edge (a6);
	% 	\path[thick] (a6) edge (a7);
	% 	\path[thick] (a7) edge (a8);
	% 	\path[thick] (a8) edge (a3);
 %  %text
 %  \node at (0,0) {$\sigma'$};
	% \end{tikzpicture}
}
\hspace{10 pt}
\subfigure[$\Delta$]{
\begin{tikzpicture}[scale=1.7]
\tikzstyle{dot}=[circle,fill=black,inner sep=1 pt];
\tikzstyle{smdot}=[circle,fill=black,inner sep=.7 pt];
\node[dot] (left) at (-1,0){};
\node[dot] (right) at (1,0){};
\node[dot] (up) at (0,1){};
\node[dot] (down) at (0,-1){};

\define \rd {1.3};

\path(node cs:name=left) node[dot] (La) at +(-95:\rd){};
\path(node cs:name=left) node[dot] (Lb) at +(-150:\rd){};
\path(node cs:name=left) node[dot] (Lc) at +(-230:\rd){};
\path(node cs:name=left) node[dot] (Ld) at +(-270:\rd){};

\path(node cs:name=right) node[dot] (la) at +(-100:\rd){};
\path(node cs:name=right) node[dot] (lb) at +(-50:\rd){};
\path(node cs:name=right) node[dot] (lc) at +(40:\rd){};
\path(node cs:name=right) node[dot] (ld) at +(80:\rd){};

\define \pos {.85};
\draw (left)node[left]{$v_1$}--node{$\tau$}(right)node[right]{$v_2$}--node[pos=\pos]{$\ell_q$}(up)--node[pos=1-\pos]{$L_p$}(left);
\draw (left)--node[pos=\pos]{$L_1$}(down)--node[pos=1-\pos]{$\ell_1$}(right);
\foreach \x/\lb in {(La)/$L_2$,(Lb)/$L_3$,(Lc)/$L_{p-2}$,(Ld)/$L_{p-1}$}
\draw (left)--node[pos=\pos]{\lb}\x;
\foreach \y/\lb in {(la)/$\ell_2$,(lb)/$\ell_3$,(lc)/$\ell_{q-2}$,(ld)/$\ell_{q-1}$}
\draw (right)--node[pos=\pos]{\lb}\y;

\foreach \source/\target in {(La)/(Lb),(Lc)/(Ld)}
\draw \source--\target;
\foreach \source/\target in {(la)/(lb),(lc)/(ld)}
\draw \source--\target;

\draw (Ld)--(up);
\draw (La)--(down);
\draw (ld)--(up);
\draw (la)--(down);

\foreach \x in {-180,-195,-210}
\draw(node cs: name=left) node[smdot] at +(\x:{.5*\rd}){};

\foreach \y in {-20,-5,10}
\draw(node cs: name=right) node[smdot] at +(\y:{.7*\rd}){};

\node at (barycentric cs:left=1,right=1 ,up=1){$\sigma_1$};
\node at (barycentric cs:left=1,right=1 ,down=1){$\sigma_2$};
\node at (barycentric cs:left=2,down=1 ,La=1){$\sigma^1_1$};
\node at (barycentric cs:left=2,La=1 ,Lb=1){$\sigma^1_2$};
\node at (barycentric cs:left=12,Lc=4 ,Ld=9){$\sigma^1_{p-2}$};
\node at (barycentric cs:left=6,Ld=1 ,up=6){$\sigma^1_{p-1}$};
\node at (barycentric cs:right=2,down=1 ,la=1){$\sigma^2_1$};
\node at (barycentric cs:right=2,la=1 ,lb=1){$\sigma^2_2$};
\node at (barycentric cs:right=3,lc=2 ,ld=1){$\sigma^1_{q-2}$};
\node at (barycentric cs:right=1,ld=1 ,up=1){$\sigma^1_{q-1}$};

\end{tikzpicture}

%\begin{tikzpicture}[scale=.8]
  %           \tikzstyle{vertex}=[circle, draw, scale = 0.25]
		% \tikzstyle{vertex1}=[circle, draw, scale = 0.25, label=above:$v_\x\text{=}\place$ ]
		% \tikzstyle{vertex2}=[circle, draw, scale = 0.25, label=below:$v_\x\text{=}\place$ ]
		% \tikzstyle{vertex3}=[circle, draw, scale = 0.25, label=right:$v_\x\text{=}\place$ ]
  % % Physical layer nodes
  % 		\foreach \place/\x in {{(-4,2)/3}, {(4,2)/7}, {(0,2)/8}}
		% \node[vertex] (a\x) at \place {};
		% \foreach \place/\x in {{(-4,-2)/4}, {(0,-2)/5},{(4,-2)/6}}
		% \node[vertex] (a\x) at \place {};
		% \foreach \place/\x in {{(-2,0)/1},{(2,0)/2}}
		% \node[vertex] (a\x) at \place {};
		% % Physical layer links
  %           \path[thick,blue] (a1) edge["$\tau$"] (a2);
		% \path[thick] (a1) edge (a3);
		% \path[thick] (a1) edge (a4);
		% \path[thick] (a1) edge (a5);
		% \path[thick] (a1) edge (a8);
		% \path[thick] (a2) edge (a5);
		% \path[thick] (a2) edge (a6);
		% \path[thick] (a2) edge (a7);
		% \path[thick] (a2) edge (a8);
		% \path[thick] (a3) edge (a4);
		% \path[thick] (a4) edge (a5);
		% \path[thick] (a5) edge (a6);
		% \path[thick] (a6) edge (a7);
		% \path[thick] (a7) edge (a8);
		% \path[thick] (a8) edge (a3);
  % %text
  % \node at (0,1) {$\sigma_{1}$};
  % \node at (0,-1) {$\sigma_{2}$};	
  % \end{tikzpicture}
  }
	\caption{$\Delta'$ is obtained by removing $\tau$ from $\Delta$}
	\label{fig:intrinsic_supersmoothness}
\end{figure}
In \cite{sorokina2018}, Sorokina discusses this phenomenon using the Bernstein-B\'{e}zier form in the case $s=t=2$ (which she calls the Toh\v{a}neanu partition due to its appearance in~\cite{tohaneanu2005smooth}).  A main result of~\cite{sorokina2018} is that $\dim C^{r}_{d}(\Delta)=\dim C^{r}_{d}(\Delta')$ for $d\le 2r$ when $s=t=2$.  In this section, we extend Sorokina's result to arbitrary $s$ and $t$.  This equality of dimensions upon removal of an edge is related to the phenomenon of \textit{supersmoothness}~\cite{sorokina2018,Floater-Hu-2020}, although we will not go into details about this.

% In this section, we want to prove the following:
% \begin{Proposition}
% \todo[inline]{Write a proposition}
% \end{Proposition}

Let $\Delta$ be a triangulation with a single totally interior edge $\tau$ and let $\Delta'$ be the partition formed by removing the edge $\tau$ from $\Delta$ as in Figure \ref{fig:intrinsic_supersmoothness}.  As in Section~\ref{setion:section_1tot}, put $J_{i}=\sum_{v_{i}\in\varepsilon,\varepsilon\neq\tau}J(\varepsilon)$ for $i=1,2$.  Furthermore, for any homogeneous ideal $I\subseteq R$, define
\begin{equation*}
    \initdeg~ I:=\min \{d : I_{d}\neq 0\}.
\end{equation*}

\begin{Lemma}\label{lemma:ses_intrinsic_smoothness}
There is a short exact sequence
\begin{equation}\label{eq:ses_intrinsic_supersmoothness}
    0\rightarrow C^r(\Delta')\xrightarrow{\iota} C^r(\Delta)\xrightarrow{\delta} J(\tau)\cap J_1\cap J_2 \to 0
\end{equation}
where $\iota$ is the natural inclusion and $\delta(F)=F_{\sigma_2}-F_{\sigma_1}$ is the difference of $F$ restricted to the faces $\sigma_{2}$ and $\sigma_{1}$ shown in Figure \ref{fig:intrinsic_supersmoothness}.  In particular, if $d< \initdeg~J_{1}\cap J_{2}\cap J(\tau)$ then $\dim C^{r}_{d}(\Delta)=\dim C^{r}_{d}(\Delta')$. \end{Lemma}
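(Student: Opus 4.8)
The plan is to verify exactness of \eqref{eq:ses_intrinsic_supersmoothness} at each of its three nonzero spots, working throughout with the coned complex so that $C^r(\Delta)$ means the graded module $C^r(\hat\Delta)$ with $[C^r(\hat\Delta)]_d\cong C^r_d(\Delta)$ and $\delta$ is a degree-preserving $R$-linear map into the homogeneous ideal $J(\tau)\cap J_1\cap J_2$. Injectivity of $\iota$ is immediate, since it is an inclusion. The first substantive step is to check that $\delta$ really lands in $J(\tau)\cap J_1\cap J_2$. That $\delta(F)=F_{\sigma_2}-F_{\sigma_1}\in J(\tau)=\langle\ell_\tau^{r+1}\rangle$ is exactly the algebraic spline criterion (Proposition~\ref{prop:algebraiccriterion}) applied to the edge $\tau$. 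To see $\delta(F)\in J_1$, I would use a telescoping argument around the interior vertex $v_1$: the triangles of the star of $v_1$ form a cyclic sequence in which $\sigma_1$ and $\sigma_2$ are the two triangles meeting $\tau$, and passing from $\sigma_1$ to $\sigma_2$ through the remaining star triangles crosses precisely the edges $\varepsilon\neq\tau$ incident to $v_1$. Summing the successive differences telescopes to $F_{\sigma_2}-F_{\sigma_1}$, while each successive difference lies in $\langle\ell_\varepsilon^{r+1}\rangle$ by Proposition~\ref{prop:algebraiccriterion}; hence $\delta(F)\in\sum_{v_1\in\varepsilon,\varepsilon\neq\tau}\langle\ell_\varepsilon^{r+1}\rangle=J_1$. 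The identical argument around $v_2$ gives $\delta(F)\in J_2$, so $\delta(F)\in J(\tau)\cap J_1\cap J_2$.

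Exactness at $C^r(\Delta)$ is the claim $\ker\delta=\img\iota$. Here $\ker\delta$ consists of those $F\in C^r(\Delta)$ with $F_{\sigma_1}=F_{\sigma_2}$, i.e.\ the two triangles adjacent to $\tau$ carry a common polynomial. Such an $F$ descends to a well-defined piecewise polynomial on $\Delta'$ (where $\sigma_1$ and $\sigma_2$ are merged into a single cell), and it remains $C^r$ across every edge of $\Delta'$ because these are exactly the edges of $\Delta$ other than $\tau$. Conversely any $G\in C^r(\Delta')$ pulls back under $\iota$ to a spline on $\Delta$ with zero jump across $\tau$, hence lies in $\ker\delta$. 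This identifies $\img\iota=\ker\delta$ with little more than unwinding definitions.

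The crux is surjectivity of $\delta$ onto $J(\tau)\cap J_1\cap J_2$. Given $g$ in this intersection, I would construct a preimage by prescribing $F_{\sigma_1}=0$ and $F_{\sigma_2}=g$ and then spreading the jump around the two stars. Since $g\in J_1$, write $g=\sum_k a_k\ell_{\varepsilon_k}^{r+1}$ over the edges $\varepsilon_k\neq\tau$ at $v_1$, ordered as one passes from $\sigma_1$ to $\sigma_2$, and assign to the $k$th star triangle the partial sum $\sum_{i\le k}a_i\ell_{\varepsilon_i}^{r+1}$; the successive differences are then divisible by the relevant $\ell_{\varepsilon_k}^{r+1}$ and the endpoints are $0$ and $g$. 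Using $g\in J_2$, do the same around $v_2$; the two assignments agree on $\sigma_1$ and $\sigma_2$ (the only triangles common to both stars, since a triangle containing both $v_1$ and $v_2$ must contain $\tau$), and one sets $F=0$ on every remaining triangle. By construction $F$ is $C^r$ across $\tau$ (as $g\in J(\tau)$) and across every edge incident to $v_1$ or $v_2$. The main obstacle is precisely the verification that this $F$ is $C^r$ across all \emph{other} edges and that the local assignments glue to a global spline: this is where the geometry of $\Delta$ must be used, namely that the two stars meet only in $\sigma_1,\sigma_2$ and that the outer edges of the stars impose no further interior smoothness condition (equivalently, the quasi-cross-cut structure of $\Delta'$). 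Making this gluing step precise, rather than the telescoping bookkeeping, is the delicate part of the argument.

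Finally, the corollary follows by taking degree-$d$ pieces of the resulting exact sequence:
\[
0\to C^r_d(\Delta')\xrightarrow{\iota} C^r_d(\Delta)\xrightarrow{\delta}(J(\tau)\cap J_1\cap J_2)_d\to 0.
\]
If $d<\initdeg(J_1\cap J_2\cap J(\tau))$ then the rightmost term vanishes, so $\iota$ is an isomorphism in degree $d$ and $\dim C^r_d(\Delta)=\dim C^r_d(\Delta')$, as claimed.
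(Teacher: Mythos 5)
Your proposal is correct and follows essentially the same route as the paper: the kernel identification is definitional, and surjectivity is proved by the same construction of assigning partial sums of the representations $g=\sum_i a_i\ell_{\varepsilon_i}^{r+1}$ (from $g\in J_1$ and $g\in J_2$) to the successive triangles of the two vertex stars. The gluing step you flag as delicate is exactly what the paper dismisses with ``one readily checks'': in this setting every triangle lies in the star of $v_1$ or $v_2$, every interior edge contains $v_1$ or $v_2$ (so the telescoping differences already verify all smoothness conditions via Proposition~\ref{prop:algebraiccriterion}), and the remaining edges are boundary edges imposing no conditions.
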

\begin{proof}
We prove that~\eqref{eq:ses_intrinsic_supersmoothness} is a short exact sequence; the final statement follows immediately.
Let $\tau$ be the totally interior edge of $\Delta$, with corresponding linear form $L_\tau$.  Let $L_1,\ldots,L_p$ be the linear forms defining the edges which surround the interior vertex $v_1$, in \textit{clockwise} order.  Likewise suppose that the linear forms defining the edges which surround the interior vertex $v_2$ in \textit{counterclockwise} order are $\ell_1,\ldots,\ell_q$.  See Figure~\ref{fig:intrinsic_supersmoothness}, where the edges are labeled by the corresponding linear forms.  With this convention, $J(\tau)=\langle L_\tau^{r+1}\rangle$, $J_1=\langle L_1^{r+1},\ldots,L_p^{r+1}\rangle$, and $J_2=\langle \ell_1^{r+1},\ldots,\ell_q^{r+1}\rangle$.

It is clear that $C^r(\Delta')$ is the kernel of the map $\delta$.  It follows from the algebraic spline criterion that if $F\in C^r(\Delta)$ then $\delta(F)\in J(\tau)\cap J_1\cap J_2$.  We show that $\delta$ is surjective.  Suppose that $f\in J(\tau)\cap J_1\cap J_2$.  We define a spline $F\in C^r(\Delta)$ so that $\delta(F)=f$ as follows.  Let $F_{\sigma_1}=0$ and $F_{\sigma_2}=f$.  Write $\sigma^1_1,\ldots,\sigma^1_{p-1}$ for the remaining faces surrounding the vertex $v_1$ (in clockwise order) and $\sigma^2_1,\ldots,\sigma^2_{q-1}$ for the remaining faces surrounding the vertex $v_2$ (in counterclockwise order).  See Figure~\ref{fig:intrinsic_supersmoothness}.

Then the linear forms defining the interior edges adjacent to $\sigma^1_i$ are $L_i$ and $L_{i+1}$ for $i=1,\ldots,p-1$ and the linear forms defining the interior edges adjacent to $\sigma^2_j$ are $\ell_j$ and $\ell_{j+1}$ for $j=1,\ldots, q-1$.

Now we continue to define $F$.  Since $f\in J_1, f=\sum_{i=1}^p g_iL_i^{r+1}$ for some polynomials $g_1,\ldots,g_p$.  Define $F_{\sigma^1_i}$ by $f-\sum_{j=1}^i g_jL_j^{r+1}$ for $i=1,\ldots,p-1$.  Likewise, since $f\in J_2$, $f=\sum_{i=1}^q h_i\ell_i^{r+1}$ for some polynomials $h_1,\ldots,h_q$.  Define $F_{\sigma^2_i}$ by $f-\sum_{j=1}^i h_j\ell_j^{r+1}$ for $i=1,\ldots,q-1$.  One readily checks, using Proposition~\ref{prop:algebraiccriterion}, that $F\in C^r(\Delta)$.  Clearly $\delta(F)=f$, so we are done.
\end{proof}

\begin{comment}
    \begin{Lemma}
Assume $\Delta$ has only one totally interior edge $\tau$ and $\Delta'$ is obtained by removing the unique totally interior edge from $\Delta$. Let $d_{0}$ be the least degree of generators in $J_{1}\cap J_{2}\cap J(\tau)$. Then for $d\leq d_{0}$,
\begin{equation*}
    C^{r}_{d}(\Delta)= C^{r}_{d}(\Delta').
\end{equation*}
\end{Lemma}
\begin{proof}
We have a natural inclusion
\begin{equation*}
    \iota: C^{r}_{d}(\Delta')\to C^{r}_{d}(\Delta).
\end{equation*}
We need to show that if $d\leq d_{0}$, then $\iota$ is also a surjection. This is equivalent to say that for $f=(f|_{\sigma})_{\sigma\in\Delta_{2}}\in C^{r}_{d}(\Delta)$, it is always true that $f|_{\sigma_{1}}=f|_{\sigma_{2}}$, where $\sigma_{1}$ and $\sigma_{2}$ are the two adjacent faces sharing $\tau$. For simplicity, we denote $f_{1}=f|_{\sigma_{1}}$ and $f_{2}=f|_{\sigma_{2}}$. If $f\in C^{r}_{d}(\Delta)$, then $f_{1}-f_{2}\in J_{1}\cap J_{2}\cap J(\tau)$. In particular, if $d\leq d_{0}$, then $f_{1}-f_{2}=0$.
\end{proof}
\end{comment}

Since $J(\tau)$ is principal, $J_{1}\cap J_{2}\cap J(\tau)=[(J_{1}\cap J_{2}):J(\tau)]J(\tau)$ and $(J_{1}\cap J_{2}):J(\tau)=(J_{1}:J(\tau))\cap (J_{2}:J(\tau))$.
Hence, the $\initdeg(J_{1}\cap J_{2}\cap J(\tau))=(r+1)+\initdeg(J_{1}:J(\tau)\cap J_{2}:J(\tau))$.
%\todo[inline]{I have to check if this is correct.  Michael: this follows from Corollary~\ref{cor:initsumEsuminit} - let's explain this carefully}
\begin{Lemma}\label{lem:intersectiondescription}
    For $i=1,2$, let $J'_{i}=J_{i}:J(\tau)$. We have
    \begin{equation*}
        \Initial(J'_{1}\cap J'_{2})=\Initial (J'_{1})\cap\Initial(J'_{2}).
    \end{equation*}
    Moreover, the monomial $x^{A}y^{B}z^{C}\in \Initial (J'_{1})\cap\Initial(J'_{2})$ if and only if $A,B,C\geq 0$, $r+1-s< sA+(s-1)C$, and $r+1-t< tB+(t-1)C$.  
\end{Lemma}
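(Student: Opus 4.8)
The plan is to establish the two claims in the opposite order to which they are stated: first the explicit monomial description (the ``Moreover'' part), which is essentially bookkeeping on top of the two-variable results of Section~\ref{sec:init}, and then the identity $\Initial(J'_1\cap J'_2)=\Initial(J'_1)\cap\Initial(J'_2)$, for which I would use a Hilbert-function comparison built on the summation property of Gr\"obner bases (Lemma~\ref{lemma:general_fact_revlex}).

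For the monomial description, I would first recall that $\Initial(J'_i)=\Initial(J_i):J(\tau)$ by Corollary~\ref{cor:initsumEsuminit}, and that $\Initial(J'_1)$ is a monomial ideal in the variables $x,z$ while $\Initial(J'_2)$ is a monomial ideal in $y,z$. Consequently $x^Ay^Bz^C\in\Initial(J'_1)$ if and only if $x^Az^C\in\Initial(J'_1)$, and likewise $x^Ay^Bz^C\in\Initial(J'_2)$ if and only if $y^Bz^C\in\Initial(J'_2)$, so each membership question reduces to two variables. There Corollary~\ref{cor:colontwovar}, applied with all multiplicities equal to $r$ and $e=r+1$, gives $x^Az^C\in\Initial(J'_1)$ exactly when $r+1-s<sA+(s-1)C$ and $y^Bz^C\in\Initial(J'_2)$ exactly when $r+1-t<tB+(t-1)C$; as in Corollary~\ref{cor:equalValuedMonomialDescription} the inequality at index $j=s$ (resp.\ $j=t$) already implies those for smaller $j$, so no existential quantifier over $j$ survives. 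Intersecting the two conditions yields precisely the stated inequalities.

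For the ideal identity, the containment $\Initial(J'_1\cap J'_2)\subseteq\Initial(J'_1)\cap\Initial(J'_2)$ is automatic, since the leading term of any $f\in J'_1\cap J'_2$ lies in both $\Initial(J'_1)$ and $\Initial(J'_2)$. To promote this to an equality I would show the two monomial ideals share the same Hilbert function, after which a containment of homogeneous ideals with equal Hilbert functions forces equality degree by degree. The Hilbert-function identity comes from the Mayer--Vietoris short exact sequence
\[
0\to R/(I\cap J)\to R/I\oplus R/J\to R/(I+J)\to 0,
\]
applied once to $(I,J)=(J'_1,J'_2)$ and once to $(I,J)=(\Initial J'_1,\Initial J'_2)$. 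Passing from an ideal to its initial ideal preserves the Hilbert function, so the $R/I$ and $R/J$ terms agree across the two sequences; and by Lemma~\ref{lemma:general_fact_revlex} we have $\Initial J'_1+\Initial J'_2=\Initial(J'_1+J'_2)$, so the $R/(I+J)$ terms agree as well. Comparing the two sequences in each degree then gives $\dim(\Initial J'_1\cap\Initial J'_2)_d=\dim(J'_1\cap J'_2)_d=\dim\Initial(J'_1\cap J'_2)_d$ for all $d$, which together with the containment completes the proof.

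The step I expect to be the crux is verifying that the hypotheses of Lemma~\ref{lemma:general_fact_revlex} genuinely hold, that is, that $J'_1=J_1:J(\tau)$ and $J'_2=J_2:J(\tau)$ remain generated by polynomials in $x,z$ and in $y,z$ respectively. This is exactly where the principality of $J(\tau)=\langle z^{r+1}\rangle$ together with the flatness of $R$ over $\RR[x,z]$ (resp.\ $\RR[y,z]$) is used, guaranteeing the colon stays inside the relevant two-variable subring; once this is in hand, everything else is routine Hilbert-function bookkeeping.
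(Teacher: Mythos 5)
Your proposal is correct and takes essentially the same route as the paper: the monomial description reduces to two variables via Corollary~\ref{cor:colontwovar}, and the identity $\Initial(J'_1\cap J'_2)=\Initial(J'_1)\cap\Initial(J'_2)$ follows from the obvious containment plus a Hilbert-function comparison, where your Mayer--Vietoris sequence is precisely the paper's inclusion--exclusion count $\dim(I\cap J)_d=\dim I_d+\dim J_d-\dim(I+J)_d$ combined with Corollary~\ref{cor:initsumEsuminit}. One minor wording slip: the existential over $j$ collapses because each inequality for $j<s$ implies the one at $j=s$ (this is the direction Corollary~\ref{cor:equalValuedMonomialDescription} actually proves), not because the $j=s$ inequality implies the others as you wrote, though the conclusion you draw is unaffected.
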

\begin{proof}
    The second part follows immediately from Corollary \ref{cor:colontwovar}. So we only verify the first part. It is clear that $\Initial(J'_{1}\cap J'_{2})\subseteq \Initial J'_{1}\cap\Initial J'_{2}$. We only need to show that $ \dim\Initial(J'_{1}\cap J'_{2})_{d}=\dim(\Initial J'_{1}\cap\Initial J'_{2})_{d}$ for all degrees $d\geq 0$.
    
    By Corollary ~\ref{cor:initsumEsuminit}, we know that $ \dim\Initial(J'_{1}+ J'_{2})_{d}=\dim(\Initial J'_{1}+\Initial J'_{2})_{d}$. We also know that $\dim (\Initial I)_{d}=\dim I_{d}$ for any ideal $I$. Because 
    \begin{equation*}
        \dim \Initial(J'_{1}\cap J'_{2})_{d}=\dim (J'_{1})_{d}+\dim (J'_{2})_{d}-\dim (J'_{1}+ J'_{2})_{d},
    \end{equation*}
    and
    \begin{equation*}
    \dim(\Initial J'_{1}\cap\Initial J'_{2})_{d}=\dim (J'_{1})_{d}+\dim (J'_{2})_{d}-\dim (\Initial J'_{1}+ \Initial J'_{2})_{d},
    \end{equation*}
    we must have $ \dim\Initial(J'_{1}\cap J'_{2})_{d}=\dim(\Initial J'_{1}\cap\Initial J'_{2})_{d}$ for all degree $d\geq 0$. This completes the proof.
\end{proof}

\begin{Corollary}\label{cor:crosscut}
Let $\Delta$ be a triangulation with a single totally interior edge satisfying Assumptions~\ref{assumptions}.  For $d\le \frac{t}{s(t-1)}r+r$, $\dim C^{r}_{d}(\Delta)=\dim C^{r}_{d}(\Delta')$.
\end{Corollary}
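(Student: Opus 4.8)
The plan is to combine Lemma~\ref{lemma:ses_intrinsic_smoothness} with a lower bound on the initial degree of $J_1\cap J_2\cap J(\tau)$ obtained from a small linear program. By Lemma~\ref{lemma:ses_intrinsic_smoothness} we have $\dim C^r_d(\Delta)=\dim C^r_d(\Delta')$ whenever $d<\initdeg(J_1\cap J_2\cap J(\tau))$, and as noted just before Lemma~\ref{lem:intersectiondescription}, $\initdeg(J_1\cap J_2\cap J(\tau))=(r+1)+\initdeg(J'_1\cap J'_2)$, where $J'_i=J_i:J(\tau)$. Thus it suffices to establish the single inequality
\[
\frac{t}{s(t-1)}r+r<(r+1)+\initdeg(J'_1\cap J'_2),
\]
equivalently $\initdeg(J'_1\cap J'_2)>\frac{t}{s(t-1)}r-1$; any integer $d\le \frac{t}{s(t-1)}r+r$ is then strictly below $\initdeg(J_1\cap J_2\cap J(\tau))$, and the claimed equality of dimensions follows.

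Next I would compute $\initdeg(J'_1\cap J'_2)$ combinatorially. Since $\dim(\Initial I)_e=\dim I_e$ for every $e$, the initial degree is unchanged upon passing to the initial ideal, so $\initdeg(J'_1\cap J'_2)=\initdeg(\Initial(J'_1)\cap\Initial(J'_2))$ via Lemma~\ref{lem:intersectiondescription}. Using the monomial description in that same lemma, $\initdeg(J'_1\cap J'_2)$ equals the least value of $A+B+C$ over nonnegative integers $A,B,C$ satisfying the integral forms $sA+(s-1)C\ge r+2-s$ and $tB+(t-1)C\ge r+2-t$ of the two strict inequalities.

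To bound this minimum below I would solve the associated linear program by exhibiting suitable dual multipliers. Multiplying the first inequality by $\tfrac1s$ and the second by $\tfrac{1}{s(t-1)}$ and adding gives
\[
A+C+\frac{t}{s(t-1)}\,B\ \ge\ \frac{r+2-s}{s}+\frac{r+2-t}{s(t-1)}=\frac{t(r+1)}{s(t-1)}-1.
\]
Because $s,t\ge 2$ we have $(s-1)(t-1)\ge 1$, that is $\frac{t}{s(t-1)}\le 1$, so together with $B\ge 0$ the left-hand side is at most $A+B+C$. Hence every feasible triple satisfies $A+B+C\ge \frac{t(r+1)}{s(t-1)}-1$, giving
\[
\initdeg(J'_1\cap J'_2)\ \ge\ \frac{t(r+1)}{s(t-1)}-1=\frac{t}{s(t-1)}r+\frac{t}{s(t-1)}-1>\frac{t}{s(t-1)}r-1,
\]
since $\frac{t}{s(t-1)}>0$. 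This is exactly the inequality required in the first paragraph, so the corollary follows.

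The main obstacle is the third step: identifying the correct dual multipliers $\big(\tfrac1s,\tfrac{1}{s(t-1)}\big)$ and checking that the resulting coefficient $\tfrac{t}{s(t-1)}$ of $B$ does not exceed $1$. This amounts to verifying that the optimal vertex of the polytope defined by the two inequalities is the one with $B=0$, rather than the vertex where $C=0$. The borderline case $\frac{t}{s(t-1)}=1$ occurs precisely when $s=t=2$, which is the Toh\v{a}neanu partition, and there the bound specializes to $d\le 2r$, recovering Sorokina's threshold.
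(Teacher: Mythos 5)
Your proof is correct. It shares the paper's overall reduction---Lemma~\ref{lemma:ses_intrinsic_smoothness}, the shift $\initdeg(J_1\cap J_2\cap J(\tau))=(r+1)+\initdeg(J'_1\cap J'_2)$, and Lemma~\ref{lem:intersectiondescription}---but the core lattice-point bound is handled by a genuinely different argument. The paper works with the closed real polyhedron $\overline{\calQ}$ cut out by the non-strict inequalities, enumerates its three vertices, invokes the earlier inequality~\eqref{eq:ineq_two_vertices_of_polytope} from the proof of Proposition~\ref{prop:bounds_H0J} to identify $Q_1=\left(\frac{t-s}{s(t-1)}r,0,\frac{r}{t-1}-1\right)$ as the minimizer of $A+B+C$ with value exactly $\frac{t}{s(t-1)}r-1$, and then needs a separate step (that the hyperplane $A+B+C=\frac{t}{s(t-1)}r-1$ meets $\overline{\calQ}$ only at $Q_1$, which violates the strict inequalities) to convert this into a strict bound on integer points. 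You avoid both the vertex enumeration and the uniqueness-of-minimizer argument: by using integrality to replace the strict inequalities with $sA+(s-1)C\ge r+2-s$ and $tB+(t-1)C\ge r+2-t$, and then certifying the linear program with the dual multipliers $\left(\tfrac{1}{s},\tfrac{1}{s(t-1)}\right)$ together with $\tfrac{t}{s(t-1)}\le 1$ (equivalently $(s-1)(t-1)\ge 1$), you obtain the lower bound $\frac{t(r+1)}{s(t-1)}-1$, whose excess $\tfrac{t}{s(t-1)}>0$ over $\frac{tr}{s(t-1)}-1$ supplies the needed strictness for free. Your route is more self-contained (no reliance on~\eqref{eq:ineq_two_vertices_of_polytope}) and cleaner on the strict-versus-non-strict issue; the paper's route, in exchange, computes the exact real optimum and its location, vertex data that is reused in Proposition~\ref{prop:bounds_H0J}, and it makes visible why the bound is governed by the vertex with $B=0$. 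Your closing observation that $\tfrac{t}{s(t-1)}=1$ exactly when $s=t=2$, recovering the threshold $d\le 2r$ of Example~\ref{ex:s2t2}, is also correct.
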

\begin{proof}
As in Lemma~\ref{lem:intersectiondescription}, we let $J'_i=J_i:J(\tau)$ for $i=1,2$.  By Lemma~\ref{lemma:ses_intrinsic_smoothness}, it suffices to prove that $\frac{t}{s(t-1)}r+r<\initdeg(J_{1}\cap J_{2}\cap J(\tau))$.  From the discussion just prior to Lemma~\ref{lem:intersectiondescription} coupled with the lemma itself, it suffices to prove that $\frac{t}{s(t-1)}r-1<\initdeg(\Initial(J'_{1})\cap \Initial(J'_{2}))$.

Let $\calQ$ be the collection of points $(A,B,C)\in\RR^3$ defined by the inequalities $A,B,C\geq 0$, $r-s+1< sA+(s-1)C$ and $r-t+1< tB+(t-1)C$.  Then its closure $\overline{\calQ}$ (in the usual topology on $\RR^3$) is the polyhedron in $\RR^3$ defined by the inequalities $A,B,C\geq 0$, $r-s+1\le sA+(s-1)C$ and $r-t+1\le tB+(t-1)C$.  Using Lemma~\ref{lem:intersectiondescription} again, 
\[
\initdeg(\Initial(J'_{1})\cap \Initial(J'_{2}))=\min\{A+B+C:(A,B,C)\in\calQ\cap\ZZ^3\}.
\]
We first show that $\frac{t}{s(t-1)}r-1$ is the smallest value achieved by $A+B+C$ on the polyhedron $\overline{\calQ}$.  Since we assume that $s\leq t\leq r+1$, it is not possible for any $(A,B,C)\in \calQ$ to satisfy $A=C=0$ or $B=C=0$. The vertices of the polyhedron $\overline{\calQ}$ are:
\begin{align*}
    Q_{1}&~:~\left(\frac{t-s}{s(t-1)}r,~0,~\frac{r}{t-1}-1\right),\\
    Q_{2}&~:~\left(\frac{r+1}{s}-1,~\frac{r+1}{t}-1,~0\right),\mbox{and}\\
    Q_{3}&~:~\left(0,~0,~\frac{r}{s-1}-1\right).
\end{align*}
We have proved \eqref{eq:ineq_two_vertices_of_polytope}, which implies that $A+B+C$ evaluated at $Q_2$ is at least as large as $A+B+C$ evaluated at $Q_1$.  Since $\frac{t}{s(t-1)}r-1\leq \frac{r}{s-1}-1$, $A+B+C$ evaluated at $Q_{1}$ is at most $A+B+C$ evaluated at $Q_{3}$.  Therefore, over the real numbers, $A+B+C$ is minimized over $\overline{\calQ}$ at the vertex $Q_1$, with a value of $\frac{t}{s(t-1)}r-1$.  Let $H'$ be the affine hyperplane defined by $A+B+C=\frac{t}{s(t-1)}r-1$.  A straightforward calculation with the inequalities also shows that $H'\cap\overline{\calQ}=Q_1$, hence $H'\cap \calQ=\emptyset$ and also $H'\cap\calQ\cap\ZZ^3=\emptyset$.  It follows that 
\[
\initdeg(\Initial(J'_{1})\cap \Initial(J'_{2}))=\min\{A+B+C:(A,B,C)\in\calQ\cap\ZZ^3\}>\frac{t}{s(t-1)}r-1. \qedhere
\]
\end{proof}

\begin{comment}
and we have a short exact sequence
\begin{equation*}
    0\to \frac{R}{(J_{1}:J(\tau))\cap(J_{2}:J(\tau))}\to\frac{R}{J_{1}:J(\tau)}\bigoplus \frac{R}{J_{2}:J(\tau)}\to \frac{R}{(J_{1}:J(\tau))+(J_{2}:J(\tau))}\to 0.
\end{equation*}
If we know Hilbert functions for the middle and the right term, then that for the left term is given by this short exact sequence.
\end{comment}

%\todo[inline]{Put a result here that says something like $\dim C^r_d(\Delta)=\dim C^r_d(\Delta')$ for $d\le $ (here put the formula for...)}

\section{The explicit dimension formula}\label{sec:explicitformula}

In this section we use the preceding sections to give an explicit formula for $\dim C^r_d(\Delta)$, where $\Delta$ is a planar triangulation with a single totally interior edge, for any $d\ge 0$ and $r\ge 0$.
%We use the expression for Schumakers lower bound -- $L(\Delta,d,r)$ -- from Proposition~\ref{prop:SchumakerLower}.  We begin with two simple cases.
%%%Michael June 25
We then illustrate the formula in a few examples.

\begin{Theorem}\label{thm:ExplicitFormula}
Let $\Delta$ be a triangulation with a single totally interior edge $\tau$ satisfying Assumptions~\ref{assumptions} and $\Delta'$ the partition formed by removing $\tau$.  Then
\[
\dim C^r_d(\Delta)=
\begin{cases}
L(\Delta',d,r) & d\le \frac{t}{s(t-1)}r+r\\
L(\Delta,d,r)+f(\Delta,d,r) & \frac{t}{s(t-1)}r+r<d\le \frac{r+1}{s}+\frac{r+1}{t}+r-1\\
L(\Delta,d,r) & d>\frac{r+1}{s}+\frac{r+1}{t}+r-1,
\end{cases}
\]
where
\[
f(\Delta,d,r):= \sum_{i=\left\lceil \frac{2st(d-r)-(s+t)d}{(s-1)(t-1)-1}\right\rceil}^{d-r-1} \left(\left\lfloor\frac{(i-d)(s-1)}{s}+r\right\rfloor-\left\lceil\frac{i+d(t-1)}{t}-r\right\rceil+1\right).
\]
Moreover, put $\mathfrak{r}=\lfloor (r+1)/s\rfloor+\lfloor(r+1)/t\rfloor+r$.  If $t\ge 3$, $r+1\equiv s-1\mod s$, and $r+1\equiv t-1\mod t$, then $f(\Delta,\mathfrak{r},r)=1$.  Otherwise $f(\Delta,\mathfrak{r},r)=0$.
\end{Theorem}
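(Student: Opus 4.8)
The plan is to reduce everything to a single lattice-point identity and then invoke the results already assembled. The outer two cases are essentially free. When $d\le \frac{t}{s(t-1)}r+r$, Corollary~\ref{cor:crosscut} gives $\dim C^r_d(\Delta)=\dim C^r_d(\Delta')$, and since $\Delta'$ is a quasi-cross-cut partition, Proposition~\ref{Prop:quasi-cross-cut} evaluates this as $L(\Delta',d,r)$; when $d>\frac{r+1}{s}+\frac{r+1}{t}+r-1$, the last sentence of Proposition~\ref{prop:bounds_H0J} gives $\dim C^r_d(\Delta)=L(\Delta,d,r)$. By Theorem~\ref{prop:nontrivialcases} we have $\dim C^r_d(\Delta)=L(\Delta,d,r)+\#(\calP_d\cap\ZZ^2)$ for every $d$, so the middle case -- and, as explained below, the ``Moreover'' assertion -- both follow once I prove the identity
\[
f(\Delta,d,r)=\#(\calP_d\cap\ZZ^2)\qquad\text{for all integers }d\ge \tfrac{t}{s(t-1)}r+r.
\]

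To prove this identity I slice the polygon $\calP_d$ by the lines $A+B=i$ and count lattice points slice by slice. Substituting $B=i-A$, the inequality $A-B(s-1)\le sr-d(s-1)$ becomes the upper bound $A\le r+\frac{(i-d)(s-1)}{s}$, the inequality $B-A(t-1)\le tr-d(t-1)$ becomes the lower bound $A\ge \frac{i+d(t-1)}{t}-r$, and $A+B\le d-r-1$ becomes the constraint $i\le d-r-1$ on the slices. The number of integers $A$ between these two bounds is exactly the summand of $f(\Delta,d,r)$, and the upper summation limit $d-r-1$ is exactly the slice constraint. Solving for when the real interval is nonempty gives $i\ge \frac{2st(d-r)-(s+t)d}{(s-1)(t-1)-1}$ -- here I use $(s-1)(t-1)-1>0$, which holds precisely because $(s,t)\neq(2,2)$ -- and the least such integer is the lower summation limit $\lceil\frac{2st(d-r)-(s+t)d}{(s-1)(t-1)-1}\rceil$. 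Thus $f(\Delta,d,r)$ counts the lattice points of the region cut out by the three inequalities above alone.

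The crux is to show that the positivity constraints $A\ge 0$ and $B\ge 0$ cut nothing off this region, so that it equals $\calP_d$. On the slice $A+B=i$, the constraint $B\ge 0$ is subsumed by the upper bound exactly when $i\ge sr-(s-1)d$, and $A\ge 0$ is subsumed by the lower bound exactly when $i\ge tr-(t-1)d$; since $(sr-(s-1)d)-(tr-(t-1)d)=(t-s)(d-r)\ge 0$, it suffices that the lower summation limit be at least $sr-(s-1)d$. As it is the ceiling of $\frac{2st(d-r)-(s+t)d}{(s-1)(t-1)-1}$, a short computation shows this holds whenever $s(t-1)d\ge r\bigl(t+s(t-1)\bigr)$, i.e.\ whenever $d\ge \frac{t}{s(t-1)}r+r$ -- exactly the hypothesis. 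This redundancy, tying the positivity constraints to the Case~1/Case~2 boundary, is the main obstacle, and it is pleasant that the threshold it produces is exactly the crosscut degree of Corollary~\ref{cor:crosscut}. With redundancy in hand each summand is nonnegative and counts precisely the lattice points of $\calP_d$ on that line, which proves the identity and hence the middle case.

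For the ``Moreover'' statement, first note $\mathfrak{r}-r-1=\lfloor\frac{r+1}{s}\rfloor+\lfloor\frac{r+1}{t}\rfloor-1$, the plane-constant of Lemma~\ref{lem:boundC}. I next check $\mathfrak{r}\ge \frac{t}{s(t-1)}r+r$ so that the identity applies at $d=\mathfrak{r}$: when $r\ge 2t-2$ this follows from $\lfloor\frac{r+1}{s}\rfloor\ge\frac{r+2-s}{s}$ and $\lfloor\frac{r+1}{t}\rfloor\ge\frac{r+2-t}{t}$, while when $t-1\le r\le 2t-2$ one has $\lfloor\frac{r+1}{t}\rfloor=1$ and the single bound $\lfloor\frac{r+1}{s}\rfloor\ge\frac{r+2-s}{s}$ suffices (both reductions amount to $(r+2)(t-1)\ge tr$). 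Then $f(\Delta,\mathfrak{r},r)=\#(\calP_{\mathfrak{r}}\cap\ZZ^2)$, and Proposition~\ref{prop:2Dpolygon} and Lemma~\ref{lem:quotientbasis} identify this with $\#(\calP\cap H\cap\ZZ^3)$ for the plane $H\colon A+B+C=\mathfrak{r}-r-1$ of Lemma~\ref{lem:boundC}, since both compute $\dim\bigl(R/(J_1:J(\tau)+J_2:J(\tau))\bigr)_{\mathfrak{r}-r-1}$. Applying that lemma finishes the proof: if $t\ge 3$ with $r+1\equiv s-1\bmod s$ and $r+1\equiv t-1\bmod t$ the intersection is the single point $(\lfloor\frac{r+1}{s}\rfloor-1,\lfloor\frac{r+1}{t}\rfloor-1,1)$, so $f(\Delta,\mathfrak{r},r)=1$; otherwise it is empty and $f(\Delta,\mathfrak{r},r)=0$. (The remaining case $s=t=2$ makes the denominator in $f$ vanish, but there the middle range is empty and $f$ is never invoked, so no statement about it is needed.)
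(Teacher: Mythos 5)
Your proposal is correct and takes essentially the same route as the paper's own proof: the outer cases via Corollary~\ref{cor:crosscut} with Proposition~\ref{Prop:quasi-cross-cut} and via the regularity bound (Proposition~\ref{prop:bounds_H0J}/Theorem~\ref{thm:LBregularity}), the middle case by invoking Theorem~\ref{prop:nontrivialcases} and showing the constraints $A\ge 0$, $B\ge 0$ on $\calP_d$ are redundant precisely when $d\ge \frac{t}{s(t-1)}r+r$ before counting lattice points on the slices $A+B=i$, and the ``Moreover'' clause via Lemma~\ref{lem:boundC} -- indeed your explicit check that $\mathfrak{r}\ge\frac{t}{s(t-1)}r+r$ (so the identity $f(\Delta,\mathfrak{r},r)=\#(\calP_{\mathfrak{r}}\cap\ZZ^2)$ applies) and your flagging of the degenerate denominator when $s=t=2$ are, if anything, more careful than the paper. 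One cosmetic slip: your parenthetical that ``both reductions amount to $(r+2)(t-1)\ge tr$'' is accurate only for the branch $t-1\le r\le 2t-2$ (for $r\ge 2t-2$ the needed inequality instead reduces, after factoring out $r-2t+2$, to $s(t-1)\ge t$), but the inequalities you actually assert in each branch are true, so nothing breaks.
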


\begin{proof}
First, it follows from $t\le r+1$ that
$
\frac{t}{s(t-1)}\le \frac{r+1}{s}+\frac{r+1}{t}+r-1.
$
Now, if $d\le \frac{t}{s(t-1)}r+r$ then $\dim C^r_d(\Delta)=\dim C^r_d(\Delta')$ by Corollary~\ref{cor:crosscut}.  Since $\Delta'$ is a quasi-cross-cut partition, it follows from~ Proposition~\ref{Prop:quasi-cross-cut} that $\dim C^r_d(\Delta')=L(\Delta',d,r)$ for all $d\ge 0$.  

Likewise, if $d>\frac{r+1}{s}+\frac{r+1}{t}+r-1$ then $\dim C^r_d(\Delta)=L(\Delta,d,r)$ by Theorem~\ref{thm:LBregularity}.  Observe that these first two cases allow us to dispense of the case $s=t=2$ (which we consider in more detail in Example~\ref{ex:s2t2}).  So henceforth we assume $t\ge 3$.

According to Theorem~\ref{prop:nontrivialcases}, it remains to show that, when $\frac{t}{s(t-1)}r+r<d\le \frac{r+1}{s}+\frac{r+1}{t}+r-1$, $f(\Delta,d,r)=\#(\calP_d\cap\ZZ^2)$, where $\calP_d$ is the polytope defined by the inequalities
$A\ge 0$, $B\ge 0$, $A\le B(s-1)-d(s-1)+sr$, $B\le A(t-1)-d(t-1)+tr$, and $A+B\le d-r-1$.

We first show that, in the given range for $d$, $\calP_d$ is in fact a triangle bounded by $A\le B(s-1)-d(s-1)+sr$, $B\le A(t-1)-d(t-1)+tr$, and $A+B\le d-r-1$.  For this observe that
\[
B\le A(t-1)-d(t-1)+tr\le B(s-1)(t-1)+sr(t-1)-d(s-1)(t-1)+tr-d(t-1)
\]
from which we deduce that $ds(t-1)-sr(t-1)-tr\le B[(s-1)(t-1)-1]$.  Since $t\ge 3$, we need only show that $0\le ds(t-1)-sr(t-1)-tr$.  Re-arranging, we see this is equivalent to 
\[
\frac{tr}{s(t-1)}+r\le d,
\]
which is precisely our assumption.  So $B\ge 0$ is a consequence of $A\le B(s-1)-d(s-1)+sr$ and $B\le A(t-1)-d(t-1)+tr$.  Since $B\ge 0$, we obtain
\[
0\le A(t-1)-d(t-1)+tr
\]
or $d(t-1)-tr\le A(t-1)$.  Using the given bound on $d$, we obtain $r(t/s)-r\le d(t-1)-tr$.  Since $s\le t$, we thus have $0\le A(t-1)$ and so $0\le A$.

% Note also that $\calP_d$ is bounded since the inequalities $A\ge 0,B\ge 0,$ and $A+B\le d-r-1$ define a bounded region and $\calP_d$ is countained in this region. 
It follows that $\calP_d$ is the triangle in the first quadrant bounded by $A\le B(s-1)-d(s-1)+sr$, $B\le A(t-1)-d(t-1)+tr$, and $A+B\le d-r-1$.  Now we count the lattice points $(A,B)\in\ZZ^2\cap\calP_d$.  We do this by counting the lattice points on the line segments defined by the intersection of $A+B=i$ with $\calP_d$, for $0\le i\le d-r-1$.  The two lines defined by the equations $A= B(s-1)-d(s-1)+sr$ and $B= A(t-1)-d(t-1)+tr$ intersect at the point
\[
\left( \frac{t(s-1)(d-r)-sr}{(s-1)(t-1)-1},\frac{s(t-1)(d-r)-tr}{(s-1)(t-1)-1}\right),
\]
where $A+B$ (restricted to $\calP_d$) achieves its minimum value of
\[
\frac{2st(d-r)-(s+t)d}{(s-1)(t-1)-1}.
\]
Thus we start our count at $i=\lceil(2st(d-r)-(s+t)d)/((s-1)(t-1)-1) \rceil$, which is the lower index of summation for the definition of $f(\Delta,d,r)$ in the theorem statement.  Clearly the maximum is $i=d-r-1$.

Now put $A+B=i$, so $B=i-A$.  We have
\[
A\le B(s-1)+sr-d(s-1)=(i-A)(s-1)+sr-d(s-1)
\]
yielding $sA\le (i-d)(s-1)+sr$ or $A\le (i-d)(s-1)/s+r$.  Likewise we have
\[
i-A=B\le A(t-1)+tr-d(t-1)
\]
which yields $i-tr+d(t-1)\le tA$ or $(i+d(t-1))/t-r\le A$.  Putting these together, the number of lattice points $(A,B)\in\calP_d\cap\ZZ^2$ with $A+B=i$ is the same as the number of integers $A\in\ZZ$ in the interval
\[
(i+d(t-1))/t-r\le A\le (i-d)(s-1)/s+r,
\]
which is counted by
\[
\left\lfloor\frac{(i-d)(s-1)}{s}+r\right\rfloor-\left\lceil\frac{i+d(t-1)}{t}-r\right\rceil+1.
\]
Summing this over the appropriate range for $i$ yields the expression for $f(\Delta,d,r)$.

Now put $\mathfrak{r}=\lfloor (r+1)/s\rfloor+\lfloor(r+1)/t\rfloor+r$.  If $t\ge 3$, $r+1\equiv s-1\mod s$, and $r+1\equiv t-1\mod t$ then $f(\Delta,\mathfrak{r},r)=1$ by Lemma~\ref{lem:boundC}.  Otherwise $f(\Delta,\mathfrak{r},r)=0$, also by Lemma~\ref{lem:boundC}.
\end{proof}

% \begin{Corollary}\label{cor:2r+1}
% If $\Delta$ has a single totally interior edge, then $\dim C^r_d(\Delta)=L(\Delta,r,d)$ for $d\ge 2r+1$, so $\Delta$ satisfies the `$2r+1$' conjecture of Schenck~\cite{Schenck-1997-thesis} (see also~\cite[Conjecture~2.1]{schenck2002cohomology}).  In particular, $\dim C^1_d(\Delta)=L(\Delta,r,d)$ for $d\ge 3$, so $\Delta$ satisfies the conjecture of Alfeld and Manni for $\dim C^1_3(\Delta)$ (see~\cite[Conjecture~3]{Alfeld-2016-survey} or~\cite{schenck2016algebraic}).
% \end{Corollary}
% \begin{proof}
% This is immediate from Proposition~\ref{prop:trivialcases} and Theorem~\ref{thm:ExplicitFormula}, coupled with the fact that $t\ge s\ge 2$.
% \end{proof}

%%%End Michael June 25

% \begin{Proposition}\label{prop:trivialcases}
% Let $\Delta$ be a triangulation with a single totally interior edge $\tau$ connecting interior vertices $v_1$ and $v_2$.  Suppose that either
% \begin{itemize}
% \item the interior edge $\tau$ has the same slope as another edge meeting $\tau$ at either $v_1$ or $v_2$ or
% \item the number of slopes of edges meeting at either $v_1$ or $v_2$ is at least $r+3$.
% \end{itemize}
% Then $\dim C^r_d(\Delta)=L(\Delta,d,r)$ for all integers $d,r\ge 0$.
% \end{Proposition}
% \begin{proof}
% The result follows from~\cite[Theorem~5.2]{schenck1997family}.  In either case, $C^r_d(\Delta)$ is a free module over the polynomial ring.
% \end{proof}

In the following examples we compute explicit formulas for certain triangulations with a single totally interior edge.  We assume that the triangulation $\Delta$ satisfies Assumptions~\ref{assumptions} and we introduce some additional notation to explicitly write out Schumaker's lower bound.
% In the remainder of the section we assume (as we have throughout earlier sections) that $\Delta$ is a planar triangulation with a single totally interior edge $\tau$ connecting the vertices $v_1$ and $v_2$, so that $\tau$ has a different slope from all other interior edges of $\Delta$. 
%  We assume there are $p+1$ edges surrounding $v_1$ with $s+1$ distinct slopes and $q+1$ edges surrounding $v_2$ with $t+1$ distinct slopes.  Furthermore, let $\Delta'$ be the quasi-cross-cut partition obtained by removing $\tau$.  
 Let $\alpha_1$ and $\nu_1$ (respectively $\alpha_2$ and $\nu_2$) be the quotient and remainder when $(s+1)(r+1)$ is divided by $s$ (respectively $(t+1)(r+1)$ is divided by $t$).  That is, $(s+1)(r+1)=\alpha_1s+\nu_1$ and $(t+1)(r+1)=\alpha_2t+\nu_2$, where $0\le \nu_1<s$ and $0\le \nu_2<t$.  Furthermore, put $\mu_1=s-\alpha_1$ and $\mu_2=t-\alpha_2$.  From Proposition~\ref{prop:SchumakerLower} we have
\begin{equation}\label{eq:SchumakerLowerDelta}
L(\Delta,d,r):= \binom{d+2}{2}+(p-s+q-t-1)\binom{d+1-r}{2}+\sum_{i=1,2} \mu_i\binom{d+2-\alpha_i}{2}+\nu_i\binom{d+1-\alpha_i}{2}.
\end{equation}
Now let $\alpha'_1$ and $\nu'_1$ (respectively $\alpha'_2$ and $\nu'_2$) be the quotient and remainder when $s(r+1)$ is divided by $s-1$ (respectively $t(r+1)$ is divided by $t-1$).  That is, $s(r+1)=\alpha'_1(s-1)+\nu'_1$ and $t(r+1)=\alpha'_2(t-1)+\nu'_2$, where $0\le \nu'_1<s-1$ and $0\le \nu'_2<t-1$.  Furthermore, put $\mu'_1=s-1-\alpha'_1$ and $\mu'_2=t-1-\alpha'_2$.  Again from Proposition~\ref{prop:SchumakerLower} we have\begin{equation}\label{eq:SchumakerLowerDeltaPrime}
L(\Delta',d,r):= \binom{d+2}{2}+(p-s+q-t)\binom{d+1-r}{2}+\sum_{i=1,2} \mu'_i\binom{d+2-\alpha'_i}{2}+\nu'_i\binom{d+1-\alpha'_i}{2}.
\end{equation}

\begin{Example}\label{ex:s2t2}
Consider the triangulation with $p=q=4$ and $s=t=2$.  Sorokina calls this a Toh\v{a}neanu partition in~\cite{sorokina2018} due to its study by Toh\v{a}neanu in~\cite{tohaneanu2005smooth} and \cite{Minac-Tohaneanu-2013}.  According to Theorem~\ref{thm:ExplicitFormula}, we have
\[
\dim C^r_d(\Delta)=
\begin{cases}
L(\Delta',d,r) & d\le 2r\\
L(\Delta,d,r) & d\ge 2r+1.
\end{cases}
\]
The first case (for $d\le 2r$) recovers~\cite[Theorem~3.1]{sorokina2018}.  The second case (for $d\ge 2r+1$) recovers the main result of~\cite{Minac-Tohaneanu-2013}.  From Equation~\eqref{eq:SchumakerLowerDeltaPrime}  we have
\[
L(\Delta',d,r)=\binom{d+2}{2}+4\binom{d+1-r}{2}+2\binom{d-2r}{2}.
\]
Since the final term of $L(\Delta',d,r)$ vanishes for $d\le 2r$, we have $\dim C^r_d(\Delta)=\binom{d+2}{2}+4\binom{d+1-r}{2}$ for $d\le 2r$, recovering~\cite[Theorem~3.2]{sorokina2018}.  When $r=2k-1$ is odd ($k\ge 1$), Equation~\eqref{eq:SchumakerLowerDelta} yields
\[
L(\Delta,d,2k-1)=\binom{d+2}{2}+3\binom{d+1-2k}{2}+2\binom{d-3k}{2}+2\binom{d+1-3k}{2}
\]
and when $r=2k$ is even ($k\ge 0$), Equation~\eqref{eq:SchumakerLowerDelta} yields
\[
L(\Delta,d,2k)=\binom{d+2}{2}+3\binom{d+2-2k}{2}+4\binom{d+2-3k}{2}.
\]
If $r=2k-1$ ($k\ge 1$), then 
\[
L(\Delta',4k-2,2k-1)-L(\Delta,4k-2,2k-1)=\binom{2k}{2}-4\binom{k}{2}=k>0
\]
and if $r=2k$ ($k\ge 1$), then
\[
L(\Delta',4k,2k)-L(\Delta,4k,2k)=\binom{2k+1}{2}-2\binom{k+1}{2}-2\binom{k}{2}=k>0.
\]
This proves that $\dim C^r_{2r}(\Delta)>L(\Delta,2r,r)$, so Schumaker's lower bound indeed does not give the correct dimension for $d\le 2r$ (the inequality also follows from an application of Theorem~\ref{thm:LBregularity}).  This recovers the main result of~\cite{tohaneanu2005smooth}.
\end{Example}

\begin{Example}\label{ex:explicits3t4}
Consider the triangulation $\Delta$ shown in Figure~\ref{fig:s3t4}, with $p=6,s=3,q=5,$ and $t=4$.  If $r\le 5$ then $C^r(\Delta)$ is free and $\dim C^r_d(\Delta)=L(\Delta,d,r)$ for all integers $d\ge 0$ by Proposition~\ref{prop:trivialcases}.  For $r\ge 6$, according to Theorem~\ref{thm:ExplicitFormula}, $\dim C^r_d(\Delta)=L(\Delta',d,r)$ for $d\le 13r/9$ and $\dim C^r_d(\Delta)=L(\Delta,d,r)$ for $d>(19r-5)/12$. For $13r/9<d\le (19r-5)/12$,
\begin{align*}
\dim C^r_d(\Delta)= & L(\Delta,d,r)+f(\Delta,d,r)\\
=& L(\Delta,d,r)+\sum_{i=\lceil(17d-24r)/5\rceil}^{d-r-1} \left(\lfloor 2/3(i-d)+r\rfloor-\lceil (i+3d)/4-r\rceil+1\right).
\end{align*}
We now use Equations~\eqref{eq:SchumakerLowerDelta} and~\eqref{eq:SchumakerLowerDeltaPrime} to compute dimension formulas for $r=6,7,8$.  When $r=6$, we have
\[
\dim C^6_d(\Delta)=
\begin{cases}
L(\Delta',d,6)= \binom{d+2}{2}+4\binom{d-5}{2}+2\binom{d-7}{2}+2\binom{d-8}{2}+\binom{d-9}{2} & d\le 8\\
L(\Delta,d,6)= \binom{d+2}{2}+3\binom{d-5}{2}+\binom{d-6}{2}+5\binom{d-7}{2}+\binom{d-8}{2} & d\ge 10
\end{cases}.
\]
When $d=9$,
\[
f(\Delta,d,r)=\sum_{i=2}^{2} \left(\lfloor 2/3(i-9)+6\rfloor-\lceil (i+3\cdot 9)/4-6\rceil+1\right)=0,
\]
which simply means that the triangle defined by the inequalities in Proposition~\ref{prop:2Dpolygon} does not contain any lattice points. Thus $\dim C^6_9(\Delta)=L(\Delta,9,6)$.  This also is expected by Theorem~\ref{thm:ExplicitFormula} since $\mathfrak{r}=\lfloor 7/3\rfloor+\lfloor 7/4\rfloor+6=9$ and $r+1=7\not\equiv 2\mod 3$.

Observing that the last three terms of $L(\Delta',d,r)$ vanish when $d\le 8$, we conclude that
\[
\dim C^6_d(\Delta)=
\begin{cases}
\binom{d+2}{2}+4\binom{d-5}{2} & d\le 8\\
\binom{d+2}{2}+3\binom{d-5}{2}+\binom{d-6}{2}+5\binom{d-7}{2}+\binom{d-8}{2} & d\ge 9
\end{cases}.
\]

When $r=7$, there is no integer $d$ so that $91/9=13r/9<d\le (19r-5)/12=128/12$, so we simply have
\[
\dim C^7_d(\Delta)=
\begin{cases}
L(\Delta',d,7)=\binom{d+2}{2}+4\binom{d-6}{2}+\binom{d-8}{2}+2\binom{d-9}{2}+2\binom{d-10}{2} & d\le 10\\
L(\Delta,d,7)=\binom{d+2}{2}+3\binom{d-6}{2}+5\binom{d-8}{2}+2\binom{d-9}{2} & d\ge 11
\end{cases}
\]

When $r=8$, we hit our first non-zero contribution from $f(\Delta,d,r)$.  Namely, when $d=12$, $f(\Delta,12,8)=1$ (this comes from the single lattice point pictured on the right in Figure~\ref{fig:3Dpolytope}).  Notice that $\mathfrak{r}=\lfloor 9/3\rfloor+\lfloor 9/4\rfloor+8=13$, so we must compute $f(\Delta,12,8)$ directly.  Thus
\[
\dim C^8_d(\Delta)=
\begin{cases}
L(\Delta',d,r)=\binom{d+2}{2}+4\binom{d-7}{2}+3\binom{d-10}{2}+\binom{d-11}{2}+\binom{d-12}{2} & d\le 11\\
L(\Delta,12,r)+f(\Delta,12,r)=134+1=135 & d=12\\
L(\Delta,d,r)=\binom{d+2}{2}+3\binom{d-7}{2}+3\binom{d-9}{2}+4\binom{d-10}{2} & d\ge 13
\end{cases}
\]

\end{Example}

\begin{Remark}
The smallest values of $s$, $t$, $r$, and $d$ where we see a non-zero contribution from $f(\Delta,d,r)$ in Theorem~\ref{thm:ExplicitFormula} are $s=2,t=3,r=5$, and $d=9$, where $f(\Delta,9,5)=1$.
\end{Remark}

\section{Concluding remarks and open problems}\label{sec:conclusions}
We close with a number of remarks on connections to the literature and open problems.

\begin{Remark}
It should be possible to use our techniques to analyze additional `supersmoothness' across the totally interior edge, as Sorokina does in~\cite{sorokina2018}.
\end{Remark}

\begin{Remark}
We can apply the methods of this paper to determine $\dim C^r_d(\Delta)$ whenever the only non-trivial generators of $H_1(\calR_\bullet/\calJ_\bullet)$ correspond to totally interior edges which do not meet each other.  In this case the dimension of $H_1(\calR_\bullet/\calJ_\bullet)_d$ would be obtained by simply adding together the contributions from the different totally interior edges.
\end{Remark}

\begin{Remark}
The next natural case in which to compute $\dim C^r_d(\Delta)$ for all $d,r\ge 0$ is the case when $\Delta$ has two totally interior edges which meet at a vertex.
\end{Remark}

\begin{Problem}
Suppose $\Delta$ is a triangulation with two totally interior edges which meet at a common vertex.  Find a formula for $\dim C^r_d(\Delta)$ for all $r,d\ge 0$, and all choices of vertex coordinates.
\end{Problem}

The recent counterexample to Schenck's `$2r+1$' conjecture in~\cite{Yuan-Stillman-2019,Schenck-Stillman-Yuan-2020} is a triangulation with two totally interior edges which meet at a vertex.  Thus, contrary to our result for triangulations with a single totally interior edge in Corollary~\ref{cor:2r+1}, we might not have $\dim C^r_d(\Delta)=L(\Delta,d,r)$ for $d\ge 2r+1$ when $\Delta$ is a triangulation with two totally interior edges meeting at a vertex.

\begin{Remark}
The well-known Morgan-Scott split, for which $\dim C^r_d(\Delta)$ depends on the global geometry of $\Delta$, has three totally interior edges which form a triangle.  In a remarkable preprint, Whiteley shows that the process of \textit{vertex splitting} applied to the Morgan-Scott split leads to infinitely many triangulations for which the dimension of $C^1$ quadratic splines depends on global geometry~\cite{WhiteleyBivariateGeometry}.  Vertex splitting results in a triangulation with additional triangles all of whose edges are totally interior edges.  As far as we are aware, the Morgan-Scott split and its vertex splits are the only known triangulations for which the dimension of $C^1$ quadratic splines exhibits a dependence upon global, as oppoosed to local, geometry.  For each $r\ge 2$, there is a variation $\Delta_{MS}^r$ of the Morgan-Scott split so that $C^r_{r+1}(\Delta_{MS}^r)$ exhibits dependence on global geometry~\cite{Luo-Liu-Shi-2010}.  Each of these has three totally interior edges forming a triangle as well.  Given that Theorem~\ref{thm:ExplicitFormula} implies that a triangulation with a single totally interior edge depends only on local geometry, we pose Problem~\ref{prob:localgeom}.
\end{Remark}

\begin{Problem}\label{prob:localgeom}
If no triangle of $\Delta$ is surrounded by totally interior edges -- equivalently, the dual graph has no interior vertex -- does the dimension of $C^r_d(\Delta)$ depend only on local geometry (that is, the number of slopes meeting at each interior vertex)?
\end{Problem}

\begin{Remark}
If $\Delta$ is a rectilinear partition, a \textit{mixed} spline space on $\Delta$, written $C^\alpha(\Delta)$, is one where different orders of smoothness are imposed across different edges according to a function $\alpha:\Delta^\circ_1\to\ZZ_{\ge 0}$.  Generally speaking, decreasing the order of smoothness across certain edges of a partition enriches the resulting spline space, while increasing the smoothness coarsens the spline space.

In~\cite{DiP-2018} it is shown that the (Castelnuovo-Mumford) regularity of the mixed spline space $C^\alpha(\Delta)$ on a rectilinear partition $\Delta$ can be bounded by the maximum regularity of the space of mixed splines on the union of two adjacent polygonal cells of $\Delta$ --that is, the \textit{star} of an edge --where vanishing is imposed (to the order prescribed by $\alpha$) across all edges which the polygonal cells do not have in common.  It may be possible that the methods of this paper can be used to improve the regularity bounds derived in~\cite{DiP-2018} for mixed splines on the star of an edge with vanishing imposed across the boundary.  Improving the regularity bound for splines on the star of an edge with vanishing across the boundary will give a better bound on the degree $d$ needed for the formula $\dim C^\alpha_d(\Delta)$ to stabilize.
\end{Remark}

\begin{Remark}\label{rem:generalizedqccp}
A \textit{generalized} quasi-cross-cut partition (see~\cite{Manni-1992}) is defined as follows.  We call a sequence of adjacent edges of $\Delta$ a \textit{cross-cut} if they all have the same slope and both endpoints of the sequence touch the boundary of $\Delta$.  We call a sequence of adjacent edges of $\Delta$ a \textit{quasi-cross-cut} if all edges have the same slope, one endpoint of the sequence touches the boundary, and the other endpoint cannot be extended to include another adjacent edge of the same slope.  It is possible that a cross-cut or quasi-cross-cut consists of only a single edge -- for instance, any edge which is not totally interior is either a quasi-cross-cut or it can be extended to a quasi-cross-cut.  For a vertex $\gamma\in\Delta^\circ_0$, we define $C_\gamma$ to be the number of cross-cuts passing through $\gamma$ and $F_\gamma$ to be the number of quasi-cross-cuts passing through $\gamma$.  The rectilinear partition $\Delta$ is a generalized quasi-cross-cut partition if $C_\gamma+F_\gamma\ge 2$ for every $\gamma\in\Delta^\circ_0$.  Generalized quasi-cross-cut partitions are studied by Manni in~\cite{Manni-1992} and Shi, Wang, and Yin in~\cite{Shi-Wang-Yin-1998}.

If $\Delta$ has a single totally interior edge, it is clearly a generalized quasi-cross-cut partition.
If $\Delta$ has a single totally interior edge connecting vertices $v_1$ and $v_2$ with $s+1$ different slopes meeting at $v_1$ and $t+1$ different slopes meeting at $v_2$, it follows from~\cite[Theorem~2.2]{Manni-1992} that $\dim C^r_d(\Delta)=L(\Delta,r,d)$ for $d\ge r+1+2\lceil (r+1)/(s-1)\rceil$ and from~\cite[Theorem~5]{Shi-Wang-Yin-1998} that $C^r_d(\Delta)=L(\Delta,r,d)$ for $d\ge r+\lfloor r/(s-1)\rfloor+\lfloor r/(t-1)\rfloor$.  Theorem~\ref{thm:LBregularity} shows an improvement on both of these bounds.  This leads us to pose Problem~\ref{prob:qcc}, inspired by the result of Shi, Wang, and Yin and our Theorem~\ref{thm:LBregularity}.
\end{Remark}

\begin{Problem}\label{prob:qcc}
If $\Delta$ is a generalized quasi-cross-cut partition, define for each edge $\tau=\{u,v\}\in\Delta_1$ the quantity $\xi_\tau=(r+1)/(C_u+N_u)+(r+1)/(C_v+N_v)$.  Let $\xi_{\Delta}=\max\{\xi_\tau:\tau\in\Delta^\circ_1\}$.  Is it true that
$
\dim C^r_d(\Delta)=L(\Delta,d,r)
$
for $d>\xi_\Delta+r-1$?
\end{Problem}

If Problem~\ref{prob:qcc} has a positive answer, it would imply that all generalized quasi-cross-cut partitions satisfy Schenck's `$2r+1$' conjecture (and the conjecture of Alfeld and Manni for $\dim C^1_3(\Delta)$).  Notice that the only known counterexample to Schenck's conjecture in~\cite{Yuan-Stillman-2019,Schenck-Stillman-Yuan-2020} is not a generalized quasi-cross-cut partition since the central vertex has only a single cross-cut passing through it, and no quasi-cross-cuts.

\section*{Acknowledgements}
The core ideas that led to this paper can be found in a preprint of the second author: \textit{An upper bound on the regularity of the first homology of spline complexes} \href{https://arxiv.org/pdf/1907.10811.pdf}{https://arxiv.org/pdf/1907.10811.pdf}.
The current paper extends these ideas to give a complete dimension formula.  The authors participate in a semi-regular virtual meeting on topics related to splines along with Peter Alfeld, Tatyana Sorokina, Nelly Villamizar, Maritza Sirvent, and Walter Whiteley.  This meeting facilitated our collaboration, and we are grateful to all members of this group for their comments on this work and many inspiring discussions.

% \bibliographystyle{plain}
% \bibliography{Reference}
% If you have bibdatabase file and want bibtex to generate the
% bibitems, please use
%
% \bibliographystyle{elsarticle-num} 
% \bibliography{<your bibdatabase>}

% else use the following coding to input the bibitems directly in the
% TeX file.

\newpage

\end{document}